\newtheorem{theorem}{Theorem}[section]
\newtheorem{definition}[theorem]{Definition}
\newtheorem{lemma}[theorem]{Lemma}
\newtheorem{example}[theorem]{Example}
\newtheorem{corollary}[theorem]{Corollary}
\newtheorem{conjecture}[theorem]{Conjecture}
\newtheorem{proposition}[theorem]{Proposition}
\newtheorem{remark}[theorem]{Remark}
\def\bd{\boldsymbol}
\def\pn{\par\smallskip\noindent}
\DeclareMathAlphabet\mathbfcal{OMS}{cmsy}{b}{n}
\newcommand{\N}{\mathbb{N}}
\newcommand{\R}{\mathbb{R}}
\newcommand{\bbH}{\mathbb{H}}
\newcommand{\F}{\mathcal{F}}
\newcommand{\bbB}{\mathbb{B}}
\newcommand{\bbI}{\mathbb{I}}
\newcommand{\bbJ}{\mathbb{J}}
\newcommand{\bbT}{\mathbb{T}}
\newcommand{\bbX}{\mathbb{X}}
\newcommand{\bbZ}{\mathbb{Z}}
\newcommand{\ba}{\boldsymbol{a}}
\newcommand{\bdd}{\boldsymbol{d}}
\newcommand{\bI}{\boldsymbol{I}}
\newcommand{\be}{\boldsymbol{e}}
\newcommand{\bg}{\boldsymbol{g}}
\newcommand{\bx}{\boldsymbol{x}}
\newcommand{\by}{\boldsymbol{y}}
\newcommand{\bz}{\boldsymbol{z}}
\newcommand{\bu}{\boldsymbol{u}}
\newcommand{\bv}{\boldsymbol{v}}
\newcommand{\bw}{\boldsymbol{w}}
\newcommand{\bP}{\boldsymbol{P}}
\newcommand{\bQ}{\boldsymbol{Q}}
\newcommand{\bs}{\boldsymbol{s}}
\newcommand{\bS}{\boldsymbol{S}}
\newcommand{\bT}{\boldsymbol{T}}
\newcommand{\bX}{\boldsymbol{X}}
\newcommand{\bY}{\boldsymbol{Y}}
\newcommand{\bZ}{\boldsymbol{Z}}
\newcommand{\bbE}{\mathbb{E}}
\newcommand{\bbV}{\mathbb{V}}
\newcommand{\bbM}{\mathbb{M}}
\newcommand{\bbQ}{\mathbb{Q}}
\newcommand{\T}{\textnormal{T}}
\newcommand{\Signuuuu}{\operatorname{Sign}(\bu\bu^{\T}-\bu^{\star}{\bu^{\star}}^{\T})}
\newcommand{\Signuiujuiuj}{\operatorname{Sign}(u_i u_j-u^{\star}_i u^{\star}_j)}
\def\multiset#1#2{\ensuremath{\left(\kern-.3em\left(\genfrac{}{}{0pt}{}{#1}{#2}\right)\kern-.3em\right)}}
\newcommand\tsup[2][2]{%
 \def\useanchorwidth{T}%
  \ifnum#1>1%
    \stackon[-.5pt]{\tsup[\numexpr#1-1\relax]{#2}}{\scriptscriptstyle\sim}%
  \else%
    \stackon[.5pt]{#2}{\scriptscriptstyle\sim}%
  \fi%
}
\newcommand\restr[2]{{
  \left.\kern-\nulldelimiterspace 
  #1 
  \littletaller 
  \right|_{#2} 
  }}
\newcommand{\littletaller}{\mathchoice{\vphantom{\big|}}{}{}{}}
\tikzstyle{red dot}=[fill=red, draw=black, shape=circle]
\tikzstyle{green dot}=[fill=green, draw=black, shape=circle]
\tikzstyle{medium box}=[fill=white, draw=black, shape=rectangle, minimum width=0.75cm, minimum height=1cm]
\tikzstyle{0.1pt}=[fill=black, draw=black, shape=circle, minimum size=0.1pt, inner sep=0pt]
\tikzstyle{0.5pt}=[fill=black, draw=black, shape=circle, minimum size=0.5pt, inner sep=0pt]
\tikzstyle{1pt}=[fill=black, draw=black, shape=circle, minimum size=1pt, inner sep=0pt]
\tikzstyle{2pt}=[fill=black, draw=black, shape=circle, minimum size=2pt, inner sep=0pt]
\tikzstyle{3pt}=[fill=black, draw=black, shape=circle, minimum size=3pt, inner sep=0pt]
\tikzstyle{3pthide}=[fill=black, draw=black, shape=circle, minimum size=3pt, inner sep=0pt, opacity=0.1]
\tikzstyle{5pt}=[fill=black, draw=black, shape=circle, minimum size=5pt, inner sep=0pt]
\tikzstyle{big circle}=[fill=none, draw=black, shape=circle, minimum size=20cm, inner sep=0pt, ultra thick]
\tikzstyle{0.5none}=[fill=none, draw=none, shape=circle, scale=0.5]
\tikzstyle{small circle}=[fill=none, draw=black, shape=circle, minimum size=5cm, inner sep=0pt, ultra thick]
\tikzstyle{directional}=[>=stealth, ->]
\tikzstyle{thick direc}=[>=stealth, ->, very thick]
\tikzstyle{dashes}=[-, densely dotted, thick]
\tikzstyle{wavy}=[-, snake it, thick]
\tikzstyle{big dashes}=[-, thick, dashed, dash pattern=on 4mm off 2mm, fill=cyan]
\tikzstyle{blue directional}=[draw=blue, ->, very thick, >=stealth]
\tikzstyle{red directional}=[draw=red, ->, very thick, >=stealth]
\tikzstyle{green directional}=[draw=green, ->, very thick, >=stealth]
\tikzstyle{thin line}=[-, very thin]
\tikzstyle{purple directional}=[->, draw=magenta, very thick, >=stealth]
\tikzstyle{pure shade}=[-, draw=none, fill={rgb,255: red,191; green,191; blue,191}, opacity=0.5]
\tikzstyle{pure rshade}=[-, draw=none, fill={rgb,255: red,255; green,0; blue,0}, opacity=0.5]
\tikzstyle{pure gshade}=[-, draw=none, fill={rgb,255: red,0; green,255; blue,0}, opacity=0.5]
\tikzstyle{blue line}=[-, fill=none, draw=blue, very thick]
\tikzstyle{purple line}=[-, draw=magenta, very thick]
\tikzstyle{line shade}=[-, draw=black, fill={rgb,255: red,191; green,191; blue,191}]
\tikzstyle{line shade 2}=[-, draw=black, fill={rgb,255: red,128; green,128; blue,128}]
\tikzstyle{thick line}=[-, very thick]
\tikzstyle{tlhide}=[-, very thick, opacity=0.1]
\pgfplotsset{compat=1.16}
\tikzset{viewport/.style 2 args={
    x={({cos(-#1)*1cm},{sin(-#1)*sin(#2)*1cm})},
    y={({-sin(-#1)*1cm},{cos(-#1)*sin(#2)*1cm})},
    z={(0,{cos(#2)*1cm})}
}}
\pgfplotsset{only foreground/.style={
    restrict expr to domain={rawx*\CameraX + rawy*\CameraY + rawz*\CameraZ}{-0.05:100},
}}
\pgfplotsset{only background/.style={
    restrict expr to domain={rawx*\CameraX + rawy*\CameraY + rawz*\CameraZ}{-100:0.05}
}}
\def\addFGBGplot[#1]#2;{
    \addplot3[#1,only background, opacity=0.25] #2;
    \addplot3[#1,only foreground] #2;
}
\pgfmathsetmacro\xx{1/sqrt(2)}
\pgfmathsetmacro\xy{1/sqrt(6)}
\pgfmathsetmacro\zy{sqrt(2/3)}
\DeclareMathAlphabet\mathbfcal{OMS}{cmsy}{b}{n}
\def\tikz@lib@cuboid@get#1{\pgfkeysvalueof{/tikz/cuboid/#1}}
\def\tikz@lib@cuboid@setup{%
   \pgfmathsetlengthmacro{\vxx}%
      {\tikz@lib@cuboid@get{xscale}*cos(\tikz@lib@cuboid@get{xangle})*1cm}
   \pgfmathsetlengthmacro{\vxy}%
      {\tikz@lib@cuboid@get{xscale}*sin(\tikz@lib@cuboid@get{xangle})*1cm}
   \pgfmathsetlengthmacro{\vyx}%
      {\tikz@lib@cuboid@get{yscale}*cos(\tikz@lib@cuboid@get{yangle})*1cm}
   \pgfmathsetlengthmacro{\vyy}%
      {\tikz@lib@cuboid@get{yscale}*sin(\tikz@lib@cuboid@get{yangle})*1cm}
   \pgfmathsetlengthmacro{\vzx}%
      {\tikz@lib@cuboid@get{zscale}*cos(\tikz@lib@cuboid@get{zangle})*1cm}
   \pgfmathsetlengthmacro{\vzy}%
      {\tikz@lib@cuboid@get{zscale}*sin(\tikz@lib@cuboid@get{zangle})*1cm}
}
\def\tikz@lib@cuboid@draw#1--#2--#3\pgf@stop{%
    \begin{scope}[join=bevel,x={(\vxx,\vxy)},y={(\vyx,\vyy)},z={(\vzx,\vzy)}]
       \begin{scope}[canvas is yz plane at x=#1]
          \draw[cuboid/all faces,cuboid/edges,cuboid/right face] 
                (0,0) -- ++(#2,0) -- ++(0,-#3) -- ++(-#2,0) -- cycle;
          \draw[cuboid/all grids,cuboid/right grid] (0,0) grid (#2,-#3);
       \end{scope}
       \begin{scope}[canvas is xy plane at z=0]
          \draw[cuboid/all faces,cuboid/edges,cuboid/front face] 
                (0,0) -- ++(#1,0) --  ++(0,#2) -- ++(-#1,0) -- cycle;
          \draw[cuboid/all grids,cuboid/front grid] (0,0) grid (#1,#2);
       \end{scope}
       \begin{scope}[canvas is xz plane at y=#2]
          \draw[cuboid/all faces,cuboid/edges,cuboid/top face] 
                (0,0) -- ++(#1,0) --  ++(0,-#3) -- ++(-#1,0) -- cycle;
          \draw[cuboid/all grids,cuboid/top grid] (0,0) grid (#1,-#3);
       \end{scope}
       \draw[cuboid/hidden edges] (0,#2,-#3) -- (0,0,-#3) -- (0,0,0) 
                (0,0,-#3) -- ++(#1,0,0);
       \begin{scope}[canvas is yz plane at x=#1]
          \draw[cuboid/all faces,cuboid/right face,cuboid/edges,fill opacity=0] 
                (0,0) -- ++(#2,0) -- ++(0,-#3) -- ++(-#2,0) -- cycle;
       \end{scope}
       \begin{scope}[canvas is xy plane at z=0]
          \draw[cuboid/all faces,cuboid/front face,cuboid/edges,fill opacity=0] 
                (0,0) -- ++(#1,0) --  ++(0,#2) -- ++(-#1,0) -- cycle;
       \end{scope}
       \begin{scope}[canvas is xz plane at y=#2]
          \draw[cuboid/all faces,cuboid/top face,cuboid/edges,fill opacity=0] 
                (0,0) -- ++(#1,0) --  ++(0,-#3) -- ++(-#1,0) -- cycle;
       \end{scope}
       \path (0,#2,0) coordinate (-left top front)
                      coordinate (-left front top)
                      coordinate (-top left front)
                      coordinate (-top front left)
                      coordinate (-front top left)
                      coordinate (-front left top);
       \path (0,#2,-#3) coordinate (-left top rear)
                        coordinate (-left rear top)
                        coordinate (-top left rear)
                        coordinate (-top rear left)
                        coordinate (-rear top left)
                        coordinate (-rear left top);
       \path (0,0,-#3) coordinate (-left bottom rear)
                       coordinate (-left rear bottom)
                       coordinate (-bottom left rear)
                       coordinate (-bottom rear left)
                       coordinate (-rear bottom left)
                       coordinate (-rear left bottom);
       \path (0,0,0) coordinate (-left bottom front)
                     coordinate (-left front bottom)
                     coordinate (-bottom left front)
                     coordinate (-bottom front left)
                     coordinate (-front bottom left)
                     coordinate (-front left bottom);
       \path (#1,#2,0) coordinate (-right top front)
                       coordinate (-right front top)
                       coordinate (-top right front)
                       coordinate (-top front right)
                       coordinate (-front top right)
                       coordinate (-front right top);
       \path (#1,#2,-#3) coordinate (-right top rear)
                         coordinate (-right rear top)
                         coordinate (-top right rear)
                         coordinate (-top rear right)
                         coordinate (-rear top right)
                         coordinate (-rear right top);
       \path (#1,0,-#3) coordinate (-right bottom rear)
                        coordinate (-right rear bottom)
                        coordinate (-bottom right rear)
                        coordinate (-bottom rear right)
                        coordinate (-rear bottom right)
                        coordinate (-rear right bottom);
       \path (#1,0,0) coordinate (-right bottom front)
                      coordinate (-right front bottom)
                      coordinate (-bottom right front)
                      coordinate (-bottom front right)
                      coordinate (-front bottom right)
                      coordinate (-front right bottom);
       \coordinate (-left center) at (0,.5*#2,-.5*#3);
       \coordinate (-right center) at (#1,.5*#2,-.5*#3);
       \coordinate (-top center) at (.5*#1,#2,-.5*#3);
       \coordinate (-bottom center) at (.5*#1,0,-.5*#3);
       \coordinate (-front center) at (.5*#1,.5*#2,0);
       \coordinate (-rear center) at (.5*#1,.5*#2,-#3);
       \coordinate (-center) at (.5*#1,.5*#2,-.5*#3);
       \path (0,#2,-.5*#3) coordinate (-left top center) 
                           coordinate (-top left center);
       \path (.5*#1,#2,-#3) coordinate (-top rear center)
                            coordinate (-rear top center);
       \path (#1,#2,-.5*#3) coordinate (-right top center)
                            coordinate (-top right center);
       \path (.5*#1,#2,0) coordinate (-top front center)
                          coordinate (-front top center);
       \path (0,0,-.5*#3) coordinate (-left bottom center) 
                           coordinate (-bottom left center);
       \path (.5*#1,0,-#3) coordinate (-bottom rear center)
                            coordinate (-rear bottom center);
       \path (#1,0,-.5*#3) coordinate (-right bottom center)
                            coordinate (-bottom right center);
       \path (.5*#1,0,0) coordinate (-bottom front center)
                          coordinate (-front bottom center);
       \path (0,.5*#2,0) coordinate (-left front center) 
                           coordinate (-front left center);
       \path (0,.5*#2,-#3) coordinate (-left rear center)
                            coordinate (-rear left center);
       \path (#1,.5*#2,0) coordinate (-right front center)
                            coordinate (-front right center);
       \path (#1,.5*#2,-#3) coordinate (-right rear center)
                          coordinate (-rear right center);
    \end{scope}
}
\tikzset{
  pics/cuboid/.style = {
    setup code = \tikz@lib@cuboid@setup,
    background code = \tikz@lib@cuboid@draw#1\pgf@stop
  },
  pics/cuboid/.default={1--1--1},
  cuboid/.is family,
  cuboid,
  all faces/.style={fill=white},
  all grids/.style={draw=none},
  front face/.style={},
  front grid/.style={},
  right face/.style={},
  right grid/.style={},
  top face/.style={},
  top grid/.style={},
  edges/.style={},
  hidden edges/.style={draw=none},
  xangle/.initial=0,
  yangle/.initial=90,
  zangle/.initial=210,
  xscale/.initial=1,
  yscale/.initial=1,
  zscale/.initial=0.5
}
\newcommand{\tikzcuboidreset}{
\tikzset{cuboid,
  all faces/.style={fill=white},
  all grids/.style={draw=none},
  front face/.style={},
  front grid/.style={},
  right face/.style={},
  right grid/.style={},
  top face/.style={},
  top grid/.style={},
  edges/.style={},
  hidden edges/.style={draw=none},
  xangle=0,
  yangle=90,
  zangle=210,
  xscale=1,
  yscale=1,
  zscale=0.5
}
}
\newcommand{\tikzcuboidset}{\@ifstar\tikzcuboidset@star\tikzcuboidset@nostar} 
\newcommand{\tikzcuboidset@nostar}[1]{\tikzcuboidreset\tikzset{cuboid,#1}}
\newcommand{\tikzcuboidset@star}[1]{\tikzset{cuboid,#1}}
\newif\ifnobrackets
\renewcommand\@cite[2]{\ifnobrackets\else[\fi{#1\if@tempswa , #2\fi}\ifnobrackets\else]\fi\nobracketsfalse}
\tikzset{snake it/.style={decorate, decoration=snake}}
\begin{document}

\title{$\ell_1$-norm rank-one symmetric matrix factorization has no spurious second-order stationary points}

\author{
Jiewen GUAN
\thanks{Department of Systems Engineering and Engineering Management, The Chinese University of Hong Kong, Shatin, New Territories, Hong Kong. Email: seemjwguan@gmail.com}
    \and
Anthony Man-Cho SO
\thanks{Department of Systems Engineering and Engineering Management, The Chinese University of Hong Kong, Shatin, New Territories, Hong Kong. Email: manchoso@se.cuhk.edu.hk}
}

\date{\today}

\maketitle

\begin{abstract}
This paper studies the nonsmooth optimization landscape of the $\ell_1$-norm rank-one symmetric matrix factorization problem using tools from second-order variational analysis. Specifically, as the main finding of this paper, we show that any second-order stationary point (and thus local minimizer) of the problem is actually globally optimal. Besides, some other results concerning the landscape of the problem, such as a complete characterization of the set of stationary points, are also developed, which should be interesting in their own rights. Furthermore, with the above theories, we revisit existing results on the generic minimizing behavior of simple algorithms for nonsmooth optimization and showcase the potential risk of their applications to our problem through several examples. 
Our techniques can potentially be applied to analyze the optimization landscapes of
a variety of other more sophisticated nonsmooth learning problems, such as robust low-rank matrix recovery. 

\vspace{0.25cm}

\noindent {\bf Keywords:} nonconvex optimization, nonsmooth analysis, second-order theory, robust matrix factorization, landscape analysis, low-rank optimization, stationary points, negative curvature

\vspace{0.25cm}

\noindent {\bf Mathematics Subject Classification (2020):} 
15A23, 
15A60, 
49J52, 
49J53, 
90C26 

\end{abstract}

\section{Introduction}
Matrix factorization is a fundamental technique for a variety of modern data analytics tasks, such as recommender systems~\cite{koren2009matrix}, network analysis~\cite{luo2021symmetric}, dimensionality reduction~\cite{qian2013robust}, and signal processing~\cite{wu2020hybrid}. Despite their nonconvexity, it has long been understood that the $\ell_2$-norm (a.k.a.\ Frobenius norm) matrix factorization problem as well as its various generalizations (e.g., matrix recovery and completion) enjoy benign landscapes in the following sense: Under certain conditions, their objective functions satisfy the so-called strict saddle property (i.e., each stationary point either corresponds to a local minimizer or the Hessian matrix evaluated there admits a negative eigenvalue; cf., e.g.,~\cite[Definitions~2-3]{zhu2021global}), and they do not have any spurious local minimum
(i.e., all of their local minima are globally optimal); see, e.g.,~\cite{bhojanapalli2016global,ge2016matrix,ge2017no,li2019symmetry,li2019non,zhu2021global}. As an important consequence of these benign properties, many simple optimization algorithms (such as the gradient descent method) are guaranteed to converge to a global optimal solution of the problems almost surely with random initializations; see, e.g.,~\cite{lee2016gradient,jin2017escape}.

Besides the vanilla $\ell_2$-norm matrix factorization discussed above, in the literature, there are also many other robust formulations of matrix factorization for enhancing its noise/outlier tolerance of corrupted data. 
Originated from the well-known fact that the $\ell_1$-norm is more robust than $\ell_2$-norm~\cite{rice1964norms,tibshirani1996regression}, 
the $\ell_1$-norm matrix factorization adopts the sum of elementwise absolute deviations to measure the discrepancy between the factorization and the target matrix. Such an approach has achieved remarkable practical performance in many real-world applications; see, e.g.,~\cite{ke2005robust,eriksson2010efficient,zheng2012practical}. However, despite its tremendous success from a practical point of view, to date, we are not aware of any theoretical results on the optimization landscape of the $\ell_1$-norm formulation that parallels those for its $\ell_2$-norm counterpart mentioned earlier. 
This can be attributed in part 
to the nonsmoothness of the formulation introduced by the $\ell_1$-norm,  
preventing 
the applications of existing analysis techniques used in the smooth case, and raises the very natural question of whether it is still possible to analyze the landscapes of such problems.

In this paper, as a first step towards a thorough understanding of the optimization landscape of $\ell_1$-norm matrix factorization and beyond, we answer, with the help of second-order variational analysis, the above question in the affirmative for the special case of $\ell_1$-norm rank-one symmetric matrix factorization.
Specifically, as the main result of this paper, we show that all second-order stationary points of the problem are actually global minimizers by exhibiting a negative second subderivative at every spurious stationary point (i.e., a stationary point that fails to be globally optimal) along some direction; see Section~\ref{sec:diff-theory} for the formal definitions of these concepts.
This provides a nonsmooth analog of
the results established for the $\ell_2$-norm counterpart and confirms a benign landscape of the problem. Underpinning the main finding is a complete characterization of the set of stationary points of the problem, which is new to the best of our knowledge and also delivers interesting insights. Besides, some other properties regarding the optimization landscape of the problem are also examined, which should be interesting in their own rights. With the landscape of the problem well understood, we then turn to revisit existing results on the generic minimizing behavior of simple algorithms for nonsmooth optimization and showcase the potential risk of their applications to our problem. 

The rest of the paper is organized as follows. We first review and discuss two works that are closely related to ours in Section~\ref{sec:related-work}. Then, we introduce the notation and preliminaries in Section~\ref{sec:notation-prelim}. With the above preparations, we move on to study the optimization landscape of the $\ell_1$-norm rank-one symmetric matrix factorization problem in Section~\ref{sec:main-result}. Afterwards, we revisit existing results on nonsmooth global optimization in Section~\ref{sec:revisit} with the developments in Section~\ref{sec:main-result}. Finally, we conclude this paper in Section~\ref{sec:conclusion} with several future research directions.

\subsection{Related works}\label{sec:related-work}
We are aware of two previous works that have a close connection to ours. The first one is due to Davis et al.~\cite{davis2020nonsmooth}, 
which provides
a complete characterization of the set of stationary points of the population loss of the robust phase retrieval problem~\cite{duchi2019solving}; cf.~\cite[Theorem~5.2]{davis2020nonsmooth}. However, beyond the characterization, no further classifications on the stationary points are given. By contrast, in this work we not only characterize the set of stationary points of the $\ell_1$-norm rank-one symmetric matrix factorization problem but also rule out the existence of spurious second-order stationary points (and thus local minimizers) via the tools from second-order variational analysis. Besides, although  in~\cite{davis2020nonsmooth} an even stronger result characterizing the set of stationary points of an arbitrary convex spectral function of the difference between two rank-one matrices
is developed (cf.~\cite[Corollary~5.12]{davis2020nonsmooth}), we remark that it is not applicable to characterize the set of stationary points of our problem.
This is because even for symmetric matrices, the $\ell_1$-norm fails to be a spectral function, since the spectrum of a symmetric matrix is always invariant under the action of the orthogonal group by conjugation, while the $\ell_1$-norm may not be. (As an aside, we would also like to remark that the $\ell_1$-norm of a matrix (or even a tensor) is actually equal to its nuclear $1$-norm; see~\cite[Proposition~2.6]{chen2020tensor}).
As we shall soon see, the analysis presented here admits a clear distinction from~\cite{davis2020nonsmooth} and is almost from first principles. 

The second work that is closely related to ours is due to Fattahi and Sojoudi~\cite{fattahi2020exact}, 
which studies
the landscape of the nonnegativity-constrained version of our problem. However, 
the analysis therein is totally first-order and the main tool used is the directional derivative. As we shall soon see, for the general version of the problem, there do exist some spurious stationary points with nonnegative directional derivatives along every direction, which suggests that a first-order analysis will be insufficient for obtaining an in-depth understanding of the landscape therearound. Besides, an additional technical assumption on the absence of zero coordinates of the planted ground-truth vector is also made there; see the statement of~\cite[Theorem~8]{fattahi2020exact}. Despite the genericity of such an assumption, it restricts the generality and generalizability of the theory. In this work, we 
overcome the above limitations
and obtain a more general theory through second-order variational analysis.

\section{Notation and preliminaries}\label{sec:notation-prelim}
\subsection{Notation}\label{sec:notations}
Throughout this paper, unless stated otherwise, we adopt lowercase letters (e.g., $x$), boldface lowercase letters (e.g., $\bx=(x_i)$), and boldface capital letters (e.g., $\bX=(x_{i j})$), to denote scalars, vectors, and matrices, respectively. 
Denote $\bbZ$ and $\R$ to be the set of integers and real numbers, respectively, and we fix some $m,n\in\bbZ\cap[1,\infty)$ in subsequent introductions.
We define $[n]:=[1,n]\cap\bbZ$ and $\R_+:=\{x\in\R:x\ge 0\}$, and similar applies to $\R_-$.
For any vector $\bx\in\R^n$, we use 
$$
\|\bx\|:=\sqrt{\sum_{i=1}^n x_i^2}\quad\text{and}\quad\|\bx\|_1:=\sum_{i=1}^n |x_i|
$$
to denote its $\ell_2$-norm and $\ell_1$-norm, respectively. The notation for $\ell_1$-norm also applies for matrices, i.e., 
$$
\|\bX\|_1=\sum_{i=1}^m \sum_{j=1}^n |x_{i j}|,\quad\text{for any matrix $\bX\in\R^{m\times n}$}.
$$
For any matrix $\bX\in\R^{m\times n}$ and index sets $\bbI\subseteq[m]$ and $\bbJ\subseteq[n]$, we use $(x_{i j}:i\in\bbI,\,j\in\bbJ)$ to denote the $|\bbI|\times |\bbJ|$ submatrix of $\bX$ with rows and columns indexed by $\bbI$ and $\bbJ$ respectively, and we also use $\operatorname{vec}(\bX)$ to denote the canonical vectorization of $\bX$, i.e., the concatenation of its columns in order.
Besides, we also use $\R^{n\times n}_{\operatorname{sym}}$ to denote the set of $n\times n$ symmetric matrices.
The $n$-dimensional all-zero vector is denoted by $\bd{0}_n$.
Besides, for $i\in[n]$, we use $\be^n_i$ to denote the $i$-th standard basis vector in $\R^n$.
Moreover, the $n\times n$ identity matrix is denoted by $\bI_n$.
The subscripts and superscripts of these special vectors and matrices indicating the ambient dimensions
are often omitted as long as there is no ambiguity. 
We use $\boxtimes$ to denote the Kronecker product, e.g., 
$$
\bx\boxtimes\by=(x_1\by^{\T},x_2\by^{\T},\dots,x_{m}\by^{\T})^{\T}\in\R^{m n},\quad\text{for any $\bx\in\R^{m}$ and $\by\in\R^{n}$}.
$$
We use $\sum_{j>i}$ and $\sum_{i=1}^{n-1}\sum_{j=i+1}^{n}$ interchangeably whenever $n$ is clear from the contexts, and the same convention applies to other operations besides summation.
For any $\bx,\by\in\R^n$, we use $\langle\bx,\by\rangle:=\sum_{i=1}^n x_i y_i$ to denote the (Frobenius) inner product of $\bx$ and $\by$. The same notation applies to matrices as well, i.e., 
$$
\langle\bX,\bY\rangle=\sum_{i=1}^m \sum_{j=1}^n x_{i j}y_{i j},\quad\text{for any $\bX,\bY\in\R^{m\times n}$}.
$$
Besides, we define for any $\bx\in\R^n$ its support $\operatorname{supp}(\bx):=\{i\in[n]:x_i\neq 0\}$, and for any $\bbJ\subseteq[n]$, we use $\bx_{\bbJ}\in\R^{|\bbJ|}$ to denote the subvector of $\bx$ obtained by throwing all its coordinates outside $\bbJ$ while keeping the orders of the remaining coordinates intact. 
We define the set-valued mapping $\operatorname{Sign}:\R\rightrightarrows\R$ as
$$
    \operatorname{Sign}(x):=
    \begin{dcases}
        \{x/|x|\}, & x\neq 0, \\
        [-1,1], & x=0.
    \end{dcases}
$$
For convenience, we also generalize the above notation for vectors and matrices with an elementwise application; e.g., $\operatorname{Sign}(\bX)=(\operatorname{Sign}(x_{i j}))$ for any $\bX\in\R^{m\times n}$. 
For any set $\mathbb{X}\subseteq\R^n$, we use $\operatorname{int}(\mathbb{X})$ 
to denote its interior,
$\operatorname{dim}(\mathbb{X})$ its dimension, $\operatorname{conv}(\bbX)$ its convex hull, $\operatorname{ext}(\bbX)$ its extreme points, $\operatorname{dist}(\by,\bbX):=\inf\{\|\by-\bx\|:\bx\in\bbX\}$ the distance from $\by\in\R^n$ to $\bbX$, and $|\bbX|$ its cardinality if it is finite. When $\bbX$ is a singleton set, we also identify it with its only element. Suppose that $\bbX$ is in addition convex, then we use 
$$
\N_{\bbX}(\bx):=\{\bv:\langle\bv,\by-\bx\rangle\le 0,\,\forall\,\by\in\bbX\}
$$
to represent its normal cone at $\bx\in\bbX$; cf.~\cite[Theorem~6.9]{rockafellar2009variational}. We also use $\varnothing$ to represent the empty set.
For a random variable $X$, we use $\mathbf{E}(X)$ to denote its expectation. For any function $f:\R^n\rightarrow\R$ that is directionally differentiable, we use 
$$
d{f(\bx)}(\bw):=\lim_{t\searrow 0}\frac{f(\bx+t\bw)-f(\bx)}{t}
$$
to denote its directional derivative at $\bx\in\R^n$ along $\bw\in\R^n$. Besides, for any function $f:\R^n\rightarrow\R$, we say it is $C^p$-smooth for some $p\in\left(\bbZ\cap[1,\infty)\right)\cup\{\infty\}$ if it is $p$-times differentiable with all partial derivatives till $p$-th order continuous, and we say it is $\rho$-weakly-convex for some $\rho\ge 0$ if the function $f+\frac{\rho}{2}\|\cdot\|^2$ is convex. In this paper, some concepts on smooth manifolds will also be used, but we prefer not to introduce them in detail to avoid introducing new notation and terminologies. Instead, we refer the readers to the recent book~\cite{boumal2023introduction} for a comprehensive introduction.

\subsection{Generalized differentiation theory}\label{sec:diff-theory}
In this part, we first introduce three different notions of subdifferentials for locally Lipschitz functions
that will be used throughout the paper. We remark that the first two definitions below are still valid even in absence of the local Lipschitz continuity.

\begin{definition}\label{def:subdiffs}
Given a locally Lipschitz function $f:\R^n\rightarrow\R$ and a point $\bx\in\R^n$:
\begin{itemize}[leftmargin=*]
    \item The Fr\'echet subdifferential~\cite[Definition~8.3(a)]{rockafellar2009variational} of $f$ at $\bx$ is defined as
    $$
        \widehat{\partial} f(\bx):=\left\{\bs \in \R^n: \liminf _{\substack{\by \rightarrow \bx,\, \by\neq\bx}} \frac{f(\by)-f(\bx)-\langle\bs, \by-\bx\rangle}{\|\by-\bx\|} \geq 0\right\}.
    $$
    \item The limiting subdifferential~\cite[Definition~8.3(b)]{rockafellar2009variational} of $f$ at $\bx$ is defined as
    $$
    \partial f(\bx):=\limsup_{\substack{\by\rightarrow\bx ,\, f(\by)\rightarrow f(\bx)}}\widehat{\partial} f(\by),
    $$
    where the $\limsup$ is taken in the sense of 
    Painlev{\'e}-Kuratowski~\cite[Section~5.B]{rockafellar2009variational}.
    \item The Clarke subdifferential~\cite[Theorem~2.5.1]{clarke1990optimization},~\cite[Theorem~9.61]{rockafellar2009variational} of $f$ at $\bx$ is defined as
    $$
    \partial_C f(\bx):=\operatorname{conv}\bigg(\limsup_{\by\rightarrow\bx,\,\by\in\mathbb{D}}\{\nabla f(\by)\}\bigg),
    $$
    where $\mathbb{D}\subseteq\R^n$ is the set on which $f$ is differentiable.
\end{itemize}
\end{definition}

It is well-known that
$$
\widehat{\partial}f(\bx)\subseteq\partial f(\bx)\subseteq\partial_C f(\bx),\quad\text{for any locally Lipschitz function $f:\R^n\rightarrow\R$ and $\bx\in\R^n$};
$$
see, e.g.,~\cite[Proposition~4.3.2(a)]{cui2021modern}. By~\cite[Corollary~8.11]{rockafellar2009variational} as well as
the discussions right after~\cite[Theorem~8.49]{rockafellar2009variational},
we know that for any subdifferentially regular locally Lipschitz function, the above hierarchy 
actually collapses to the lowest level. Since in this paper we will only be dealing with subdifferentially regular locally Lipschitz functions, in what follows, without being explicitly mentioned, we will simply use the terminology ``subdifferential'' with notation ``$\partial$'' to indicate either of the above three constructions. For more properties and relationships of the above three subdifferentials (and beyond), we refer interested readers to~\cite[Section~8]{rockafellar2009variational},~\cite[Section~2]{clarke1990optimization},~\cite[Section~4.3]{cui2021modern}, and~\cite{li2020understanding}. In addition, we would like to recommend the following references~\cite{tian2021hardness,tian2022computing,tian2022finite,tian2023testing,tian2024no} that are helpful for strengthening the understanding of these concepts.

We next introduce the main second-order tool we adopted for nonsmooth landscape analysis.

\begin{definition}\label{def:second-subderivative}
    Given a function $f:\R^n\rightarrow\R$, a point $\bx\in\R^n$, and two arbitrary vectors $\bv,\bw\in\R^n$, the second subderivative~\cite[Definition~13.3]{rockafellar2009variational} of $f$ at $\bx$ for $\bv$ and $\bw$ is defined as
    $$
        d^2{f(\bx;\bv)}(\bw):=\liminf_{\substack{t\searrow 0,\, \bw^{\prime}\rightarrow\bw}}\frac{f(\bx+t\bw^{\prime})-f(\bx)-t\cdot\bv^{\T}\bw^{\prime}}{\frac{1}{2}t^2}.
    $$
\end{definition}

Although Definition~\ref{def:second-subderivative} may look quite strange and unintuitive at a first glance, we remark that it is actually a very natural generalization of the Hessian matrix of a smooth function if it is viewed as a linear operator. Indeed, suppose that $f$ is in addition $C^2$-smooth (or even something weaker as defined in~\cite[Definition~13.1]{rockafellar2009variational}). Then, we have from~\cite[Example~13.8]{rockafellar2009variational} that
$$
d^2{f(\bx;\nabla f(\bx))}(\bw)=\bw^{\T}\nabla^2 f(\bx)\bw,\quad\text{for every $\bw\in\R^n$}.
$$
Besides, it is also known that the second subderivative is positively homogeneous of degree two~\cite[Proposition~13.5]{rockafellar2009variational}, which is the same as a quadratic form. As an aside, we remark that although in Definition~\ref{def:second-subderivative} the designation of $\bv$ can be quite versatile, in this paper, we will only focus on the quantity $d^2{f(\bx;\bd{0})}(\bw)$ at those $\bx$ with $\bd{0}\in\partial f(\bx)$, as this is the only quantity that has something to do with the second-order optimality condition~\cite[Theorem~13.24(a)]{rockafellar2009variational}.

It is worth noting that in this paper some other constructions of second subderivatives will also be (briefly) invoked somewhere, namely: 
\begin{itemize}
    \item $d^2{f(\bx)}(\bw)$, the second subderivative of $f$ at $\bx$ for $\bw$ (no mention of $\bv$)~\cite[Definition~13.3]{rockafellar2009variational}.
    
    \item $d^2{f}(\bx)(\bw;\bz)$, the parabolic subderivative of $f$ at $\bx$ for $\bw$ w.r.t.\ $\bz\in\R^n$~\cite[Definition~13.59]{rockafellar2009variational}.
    
    \item $f^{\prime\prime}(\bx;\bw)$, the (one-sided) second directional derivative of $f$ at $\bx$ along $\bw$~\cite[Equation~13.3]{rockafellar2009variational}.
\end{itemize}
However, as their roles in this paper are not as important as that of the one defined in Definition~\ref{def:second-subderivative}, and for keeping the material succinct and clean and avoiding an overwhelm of consecutive technical definitions, we prefer not to introduce them in further detail. Interested readers may refer to the above pointers for their formal definitions.

Finally, following~\cite[Theorem~13.24(a)]{rockafellar2009variational}, we make precise what do we mean by stationarity.

\begin{definition}
    Given a function $f:\R^n\rightarrow\R$ and a point $\bx\in\R^n$, we say:
    \begin{itemize}
        \item $\bx$ is a stationary point of $f$ if $\bd{0}\in\partial f(\bx)$.
        \item $\bx$ is a second-order stationary point if it is stationary and $d^2{f(\bx;\bd{0})}(\bw)\ge 0$ for all $\bw\in\R^n$.
    \end{itemize}
\end{definition}

\section{Landscape of $\ell_1$-norm rank-one symmetric matrix factorization}\label{sec:main-result}
In this section, we analyze the landscape of the following optimization problem
\begin{equation}\label{eq:subdiff}
    \min\left\{f(\bu):=\frac{1}{2}\|\bu\bu^{\T}-\bu^{\star}{\bu^{\star}}^{\T}\|_1:\bu\in\R^n\right\},
\end{equation}
where $\bu^{\star}\in\R^n$ is some planted ground-truth vector to be recovered by solving the above nonconvex nonsmooth program. We remark that as $f$ is fully amenable (cf.~\cite[Definition~10.23]{rockafellar2009variational}), we know from~\cite[Theorem~10.25(a)]{rockafellar2009variational} that $f$ is subdifferentially regular everywhere. Since we are interested in the stationary points of $f$, we first derive an explicit expression for its subdifferential. Indeed, it is easy to compute that
$$
    \partial f(\bu)=\left\{\left(\frac{\bS+\bS^{\T}}{2}\right)\cdot\bu:\bS\in\operatorname{Sign}(\bu\bu^{\T}-\bu^{\star}{\bu^{\star}}^{\T})\right\}=\left(\operatorname{Sign}(\bu\bu^{\T}-\bu^{\star}{\bu^{\star}}^{\T})\cap\R^{n\times n}_{\operatorname{sym}}\right)\cdot\bu,
$$
where we have used~\cite[Theorem~10.6]{rockafellar2009variational} and the subdifferential regularity of the $\ell_1$-norm implied by its convexity~\cite[Example~7.27]{rockafellar2009variational}. 

\subsection{A complete characterization of the set of stationary points}
As a first but critically important step towards the ultimate goal in this section, we completely characterize the set of stationary points of $f$.

\begin{theorem}\label{thm:char-critical-r1}
    Suppose that $\bu^\star\in\R^n\setminus\{\bd{0}\}$. Then, we have 
    $$
        \{\bu:\bd{0}\in\partial f(\bu)\}=\{\bu:|u_i|\le|u^{\star}_i|,\,i\in[n],\,(\operatorname{Sign}(\bu^{\star}))^{\T}\bu=0\}\cup\{\pm\bu^{\star}\}.
    $$
\end{theorem}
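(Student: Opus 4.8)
The plan is to translate stationarity into a statement about sign matrices and then argue by a case analysis after two normalizing reductions. From the displayed formula for $\partial f$, the point $\bu$ is stationary precisely when there is a symmetric $\bS$ with $S_{ij}\in\operatorname{Sign}(u_iu_j-u^{\star}_iu^{\star}_j)$ for all $i,j$ and $\bS\bu=\bd{0}$; write $M_{ij}:=u_iu_j-u^{\star}_iu^{\star}_j$. First I would reduce to $\bu^{\star}>\bd{0}$ componentwise. Conjugating $\bS$ and $\bu$ by $\Diag(\bt)$, where $t_i\in\operatorname{Sign}(u^{\star}_i)$ is a fixed selection, replaces $(\bu,\bu^{\star})$ by $(\Diag(\bt)\bu,\Diag(\bt)\bu^{\star})$ and preserves both stationarity and membership in the claimed right-hand side, so we may assume $\bu^{\star}\ge\bd{0}$. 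Next, any stationary $\bu$ has $\operatorname{supp}(\bu)\subseteq\operatorname{supp}(\bu^{\star})$: if $u_k\neq0$ while $u^{\star}_k=0$, then $M_{kj}=u_ku_j$ for every $j$, so the $k$-th equation of $\bS\bu=\bd{0}$ reads $\operatorname{sign}(u_k)\|\bu\|_1=0$, forcing $\bu=\bd{0}$, a contradiction. The equations of $\bS\bu=\bd{0}$ indexed outside $\operatorname{supp}(\bu^{\star})$ then impose nothing (their entries of $\bS$ are unconstrained), so everything restricts to the principal submatrix on $\operatorname{supp}(\bu^{\star})$, and it suffices to prove
$$
    \{\bu:\bd{0}\in\partial f(\bu)\}=\Big\{\bu:|u_i|\le u^{\star}_i,\ i\in[n],\ \textstyle\sum_{i}u_i=0\Big\}\cup\{\pm\bu^{\star}\}
$$
under the standing assumption $\bu^{\star}>\bd{0}$.

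For ``$\supseteq$'' in this reduced setting: $\pm\bu^{\star}$ is stationary with witness $\bS=\bd{0}$; and if $|u_i|\le u^{\star}_i$ for all $i$ and $\sum_i u_i=0$, then $\bS:=-\bd{1}\bd{1}^{\T}$ (with $\bd{1}$ the all-ones vector) works, since $|u_iu_j|\le u^{\star}_iu^{\star}_j$ gives $M_{ij}\le0$, hence $-1\in\operatorname{Sign}(M_{ij})$, while $\bS\bu=-\bd{1}\big(\sum_i u_i\big)=\bd{0}$.

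The substantive direction is ``$\subseteq$''. Given a stationary $\bu$ with witness $\bS$, partition $[n]=P_{+}\sqcup P_{-}\sqcup Z$ according to whether $u_i$ is positive, negative, or zero; the key observation is that $S_{ij}=-1$ unless $i$ and $j$ both lie in $P_{+}$ or both in $P_{-}$, since otherwise $u_iu_j\le0<u^{\star}_iu^{\star}_j$. If both $P_{+}$ and $P_{-}$ are nonempty, then expanding the $P_{+}$-rows and the $P_{-}$-rows of $\bS\bu=\bd{0}$ and bounding $|S_{ij}|\le1$ gives $\sum_{j\in P_{-}}|u_j|\le\sum_{j\in P_{+}}u_j$ together with the reverse inequality, so these sums coincide and $\sum_i u_i=0$; substituting back, each $P_{+}$-row becomes $\sum_{j\in P_{+}}(1+S_{ij})u_j=0$, a sum of nonnegative terms, so $S_{ij}=-1$ for all $i,j\in P_{+}$ (and likewise on $P_{-}$), in particular $S_{ii}=-1$, which forces $u_i^2\le(u^{\star}_i)^2$, i.e.\ $|u_i|\le u^{\star}_i$ (and $|u_i|=0\le u^{\star}_i$ on $Z$); thus $\bu$ lies in the first set (in fact $\bS=-\bd{1}\bd{1}^{\T}$). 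If $\bu$ is single-signed but has a zero coordinate, a $Z$-row of $\bS\bu=\bd{0}$ reads $-\sum_i u_i=0$, so $\bu=\bd{0}$. The only remaining case is $\bu$ strictly single-signed; by the symmetry $\bu\mapsto-\bu$ assume $\bu>\bd{0}$, and set $r_i:=u_i/u^{\star}_i>0$, so that $S_{ij}=\operatorname{sign}(r_ir_j-1)$ off the tie set $\{(i,j):r_ir_j=1\}$. If $r_{\max}r_{\min}>1$, the row indexed by a maximizer of $r$ has all entries equal to $1$, forcing $\sum_j u_j=0$, a contradiction; symmetrically $r_{\max}r_{\min}<1$ is impossible, so $r_{\max}r_{\min}=1$. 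If $r_{\max}=r_{\min}$ then $r\equiv1$ and $\bu=\bu^{\star}$. Otherwise $r_{\min}<1<r_{\max}$; with $A:=\{i:r_i=r_{\min}\}$ and $B:=\{i:r_i=r_{\max}\}$, the mass inequalities coming from an $A$-row and a $B$-row (using $A\cap B=\varnothing$ and positivity of $\bu$) force $A\cup B=[n]$, and then those rows force $S_{ij}=-1$ for $i\in B,\,j\in A$ but $S_{ij}=+1$ for $i\in A,\,j\in B$, contradicting the symmetry of $\bS$; so this case cannot occur.

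Finally, undoing the support and sign reductions turns the displayed reduced identity back into the stated one; in particular the constraint $\sum_i u_i=0$ becomes $(\operatorname{Sign}(\bu^{\star}))^{\T}\bu=0$ once the forced vanishing of $\bu$ outside $\operatorname{supp}(\bu^{\star})$ is incorporated. I expect the main obstacle to be the strictly single-signed case — the statement that a stationary point all of whose coordinates share one sign must be $\pm\bu^{\star}$ — which needs the extremal/averaging argument above that genuinely exploits the symmetry of $\bS$; the remaining cases, the easy inclusion, and the two reductions are essentially bookkeeping.
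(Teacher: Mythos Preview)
Your argument is correct and substantially cleaner than the paper's. Both proofs begin by reducing to $\bu^{\star}$ with full support, but your additional sign normalization to $\bu^{\star}>\bd{0}$ pays off immediately: it makes the observation that $S_{ij}=-1$ whenever $u_i,u_j$ do not share a strict sign completely transparent, so partitioning by $\operatorname{sign}(u_i)$ pins down the off-block entries of $\bS$ and leaves only three cases. The paper instead partitions by the magnitude comparison $|u_i|$ versus $|u^{\star}_i|$ together with sign-agreement markers, which generates seven top-level cases with further sub-cases; the most involved of these (all three magnitude classes nonempty) is explicitly left to the reader as ``too complicated and tedious to be presentable.'' Your extremal argument via the ratios $r_i=u_i/u^{\star}_i$ and the forced identity $r_{\max}r_{\min}=1$ in the strictly single-signed case is the one step with real content, and it is precisely here that the symmetry of $\bS$ is used; the paper extracts the same contradiction (an asymmetric would-be sign matrix) only after isolating the sub-case $\bbJ_{>}(\bu)=\bbJ_{>,\sim}(\bu)$, $\bbJ_{<}(\bu)=\bbJ_{<,\sim}(\bu)$ within its magnitude-based taxonomy. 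What your route buys is brevity and a self-contained proof; what the paper's route buys is a finer bookkeeping of which entries of $\operatorname{Sign}(\bu\bu^{\T}-\bu^{\star}{\bu^{\star}}^{\T})$ are free versus fixed, which feeds more directly into its later computations of $\partial f(\bu)$ and the critical cone.
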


We remark that although the equation $(\operatorname{Sign}(\bu^{\star}))^{\T}\bu=0$ may seem ambiguous at a first glance since $\operatorname{Sign}(u^{\star}_i)=[-1,1]$ if $u^{\star}_i=0$, we know from the constraint $|u_i|\le|u^{\star}_i|$ that such $u_i$ must be zero as well. As a result, $(\operatorname{Sign}(\bu^{\star}))^{\T}\bu$ is actually a singleton set, which by our notation (cf.~Section~\ref{sec:notations}) is identified with its only element, showing the equation is still well-defined.
Before presenting the proof, we mention a useful property 
\begin{equation}\label{eq:property-sign}
    \operatorname{Sign}(x\cdot y)=\operatorname{Sign}(x)\cdot\operatorname{Sign}(y),\quad\text{for every $x,y\in\R$},
\end{equation}
whose verification is routine and thus omitted. It is worth noting that even if some of $x$ and $y$ is zero, such a property is still valid, which is quite interesting. The property (\ref{eq:property-sign}) will be repeatedly used throughout the proof.

\begin{proof}
    Without loss of generality, we may assume that $u^{\star}_i\neq 0$ for every $i\in[n]$. This is because suppose that there is some $i\in[n]$ for which $u^{\star}_i=0$. Then, whenever $u_i\neq 0$, for any $\bg\in\partial f(\bu)$, we know
    $$
        g_i=\left(\operatorname{Sign}(u_i\cdot\bu)\right)^{\T}\bu=\operatorname{Sign}(u_i)\cdot\|\bu\|_1\neq 0,
    $$
    and thus such $\bu$ can never be stationary. However, whenever $u_i=0$, we can then equivalently study the dimension-reduced version of the problem where the $i$-th coordinates of $\bu$ and $\bu^{\star}$ are both removed. This is feasible because in this case the $i$-th coordinate of $\partial f(\bu)$ must contain zero, and the other coordinates totally have nothing to do with $u_i$ and $u^{\star}_i$. Besides, we may repeat the above procedure until the problem has been reduced such that all coordinates of $\bu^{\star}$ are nonzero, which is right the setting as assumed.
    
    Before starting the formal proof, we make the following definitions
    $$
        \bbJ_{>}(\bu):=\{i\in[n]:|u_i|>|u_i^{\star}|\},~\bbJ_{=}(\bu):=\{i\in[n]:|u_i|=|u_i^{\star}|\},~\bbJ_{<}(\bu):=\{i\in[n]:|u_i|<|u_i^{\star}|\}.
    $$
    It is clear from the definitions that $[n]=\bbJ_{>}(\bu)\cup\bbJ_{=}(\bu)\cup\bbJ_{<}(\bu)$. Besides, the above indices can be further partitioned into finer granularity as
    $$
        \bbJ_{>}(\bu)=\bbJ_{>,\sim}(\bu)\cup\bbJ_{>,\centernot{\sim}}(\bu),~\bbJ_{=}(\bu)=\bbJ_{=,\sim}(\bu)\cup\bbJ_{=,\centernot{\sim}}(\bu),~\bbJ_{<}(\bu)=\bbJ_{<,\sim}(\bu)\cup\bbJ_{<,0}(\bu)\cup\bbJ_{<,\centernot{\sim}}(\bu),
    $$
    where
    $$
        \bbJ_{>,\sim}(\bu):=\{i\in\bbJ_{>}:\operatorname{Sign}(u_i)=\operatorname{Sign}(u^{\star}_i)\},~\bbJ_{>,\centernot{\sim}}(\bu):=\{i\in\bbJ_{>}:\operatorname{Sign}(u_i)=-\operatorname{Sign}(u^{\star}_i)\},
    $$
    $$
        \bbJ_{=,\sim}(\bu):=\{i\in\bbJ_{=}:\operatorname{Sign}(u_i)=\operatorname{Sign}(u^{\star}_i)\},~\bbJ_{=,\centernot{\sim}}(\bu):=\{i\in\bbJ_{=}:\operatorname{Sign}(u_i)=-\operatorname{Sign}(u^{\star}_i)\},
    $$
    $$
        \bbJ_{<,\sim}(\bu):=\{i\in\bbJ_{<}:\operatorname{Sign}(u_i)=\operatorname{Sign}(u^{\star}_i)\},~\bbJ_{<,\centernot{\sim}}(\bu):=\{i\in\bbJ_{<}:\operatorname{Sign}(u_i)=-\operatorname{Sign}(u^{\star}_i)\},
    $$
    and 
    $$
        \bbJ_{<,0}(\bu):=\{i\in\bbJ_{<}:u_i=0\}.
    $$
    With the above indices defined, we next make another assumption that for any given $\bu$, its coordinates together with the ones of $\bu^{\star}$ have already been simultaneously permuted\footnote{There could be multiple possible permutations for our purposes, but anyone is fine.} such that $\bbJ_{>,\sim}(\bu)$, $\bbJ_{>,\centernot{\sim}}(\bu)$, $\bbJ_{=,\sim}(\bu)$, $\bbJ_{=,\centernot{\sim}}(\bu)$, $\bbJ_{<,\sim}(\bu)$, $\bbJ_{<,0}(\bu)$, and $\bbJ_{<,\centernot{\sim}}(\bu)$ form an \textit{ordered} partition of $[n]$. This assumption is also without loss of generality (and is only to simplify notation and ease understanding and presentation), because as we shall soon see, all of the subsequent characterizations and their deductions are invariant under simultaneous permutations of $\bu$ and $\bu^{\star}$. With the above preparations, we next embark on the main proof, which is divided into the following seven cases. In each case, we will either refute the possibility or find an equivalent condition for $\bu$ to be stationary.
    \begin{itemize}[leftmargin=*]
        \item $[n]=\bbJ_{>}(\bu)$: In this case, we know $\Signuuuu=\operatorname{Sign}(\bu\bu^{\T})=\operatorname{Sign}(\bu)\cdot(\operatorname{Sign}(\bu))^{\T}$, where the second equality directly follows from (\ref{eq:property-sign}). This, together with (\ref{eq:subdiff}), further implies that for any (and actually the only) $\bZ\in\partial f(\bu)=\operatorname{Sign}(\bu\bu^{\T}-\bu^{\star}{\bu^{\star}}^{\T})\cap\R^{n\times n}_{\operatorname{sym}}$, we have
        $$
            \bZ\bu=\operatorname{Sign}(\bu)\cdot(\operatorname{Sign}(\bu))^{\T}\bu=\|\bu\|_1\cdot\operatorname{Sign}(\bu)\neq 0,
        $$
        which implies $\bu$ can never be stationary.
        
        \item $[n]=\bbJ_{=}(\bu)$: In this case, suppose that $\bbJ_{=}(\bu)$ equals either $\bbJ_{=,\sim}(\bu)$ or $\bbJ_{=,\centernot{\sim}}(\bu)$. Then, we know $\bu$ must be either of the ground-truths $\pm\bu^{\star}$.
        Conversely, since $\pm\bu^{\star}$ are clearly global minimizers of $f$, by the generalized Fermat's rule~\cite[Theorem~10.1]{rockafellar2009variational} we know they are stationary points of $f$ as well. Otherwise, since  
        $$
            \Signuiujuiuj=-\operatorname{Sign}(u_i^{\star} u_j^{\star}),\quad\text{for any $i\in\bbJ_{=,\sim}(\bu)$ and $j\in\bbJ_{=,\centernot{\sim}}(\bu)$},
        $$
        we know the following partition holds for $\Signuuuu$
        $$
            \begin{pmatrix}
                [-1,1]^{|\bbJ_{=,\sim}(\bu)|\times |\bbJ_{=,\sim}(\bu)|} & \left(-\operatorname{Sign}(u_i^{\star} u_j^{\star}):i\in\bbJ_{=,\sim}(\bu),\,j\in\bbJ_{=,\centernot{\sim}}(\bu)\right) \\ 
                \left(-\operatorname{Sign}(u_i^{\star} u_j^{\star}):i\in\bbJ_{=,\centernot{\sim}}(\bu),\,j\in\bbJ_{=,\sim}(\bu)\right) & [-1,1]^{|\bbJ_{=,\centernot{\sim}}(\bu)|\times |\bbJ_{=,\centernot{\sim}}(\bu)|}
            \end{pmatrix}.
        $$
        As a result, suppose that $\bu$ is stationary, then we must have the following system
        $$
            \begin{dcases}
                \sum_{j\in\bbJ_{=,\sim}(\bu)}|u_j|\ge\left|\sum_{j\in\bbJ_{=,\centernot{\sim}}(\bu)}\left(-\operatorname{Sign}(u_i^{\star} u_j^{\star})\right)\cdot u_j\right|=\sum_{j\in\bbJ_{=,\centernot{\sim}}(\bu)}|u_j|, & i\in\bbJ_{=,\sim}(\bu), \\
                \sum_{j\in\bbJ_{=,\centernot{\sim}}(\bu)}|u_j|\ge\left|\sum_{j\in\bbJ_{=,\sim}(\bu)}\left(-\operatorname{Sign}(u_i^{\star} u_j^{\star})\right)\cdot u_j\right|=\sum_{j\in\bbJ_{=,\sim}(\bu)}|u_j|, & i\in\bbJ_{=,\centernot{\sim}}(\bu),
            \end{dcases}
        $$
        which together imply
        $$
            \sum_{j\in\bbJ_{=,\sim}(\bu)}|u_j|=\sum_{j\in\bbJ_{=,\centernot{\sim}}(\bu)}|u_j|\iff(\operatorname{Sign}(\bu^{\star}))^{\T}\bu=0.
        $$
        Conversely, suppose that we have $(\operatorname{Sign}(\bu^{\star}))^{\T}\bu=0$. Then, by letting $\bZ=-\operatorname{Sign}(\bu^{\star}{\bu^{\star}}^{\T})\in\R^{n\times n}_{\operatorname{sym}}$, we know $\bZ\in\Signuuuu\cap\R^{n\times n}_{\operatorname{sym}}$ with $\bZ\bu=0$, and $\bu$ is thus stationary.
        
        \item $[n]=\bbJ_{<}(\bu)$: In this case, we know $\Signuuuu=-\operatorname{Sign}(\bu^{\star}{\bu^{\star}}^{\T})\in\R^{n\times n}_{\operatorname{sym}}$. As a result, suppose that $\bu$ is stationary, then we must have
        $$
            \operatorname{Sign}(\bu^{\star}{\bu^{\star}}^{\T})\cdot\bu=\bd{0}\iff(\operatorname{Sign}(\bu^{\star}))^{\T}\bu=0.
        $$
        The converse is also true, as can be easily seen.
        
    \item $[n]=\bbJ_{>}(\bu)\cup\bbJ_{=}(\bu)$ with $|\bbJ_{>}(\bu)|,|\bbJ_{=}(\bu)|\neq 0$: In this case, we observe $\Signuuuu$ has the following structure
    $$
        \begin{pmatrix}
            \Big(\operatorname{Sign}(u_i u_j):i\in\bbJ_{>}(\bu),\,j\in\bbJ_{>}(\bu)\Big) & \Big(\operatorname{Sign}(u_i u_j):i\in\bbJ_{>}(\bu),\,j\in\bbJ_{=}(\bu)\Big) \\
            \Big(\operatorname{Sign}(u_i u_j):i\in\bbJ_{=}(\bu),\,j\in\bbJ_{>}(\bu)\Big) & \bbT\subseteq [-1,1]^{|\bbJ_{=}(\bu)|\times|\bbJ_{=}(\bu)|}
        \end{pmatrix},
    $$
    where $\bbT$ is something that is not of interest. As a result, we have for any $\bg\in\partial f(\bu)$ that
    $$
        g_i=\operatorname{Sign}(u_i)\cdot\sum_{j=1}^n \operatorname{Sign}(u_j)\cdot u_j=\operatorname{Sign}(u_i)\cdot\|\bu\|_1\neq 0,\quad\text{for any $i\in\bbJ_{>}(\bu)$},
    $$
    and $\bu$ can therefore never be stationary.

    \item $[n]=\bbJ_{=}(\bu)\cup\bbJ_{<}(\bu)$ with $|\bbJ_{=}(\bu)|,|\bbJ_{<}(\bu)|\neq 0$: In this case, we observe $\Signuuuu$ has the following structure
    $$
        \begin{pmatrix}
            \left(\operatorname{Sign}(u_i u_j -u_i^{\star} u_j^{\star}):i\in\bbJ_{=}(\bu),\,j\in\bbJ_{=}(\bu)\right) & \left(-\operatorname{Sign}(u_i^{\star} u_j^{\star}):i\in\bbJ_{=}(\bu),\,j\in\bbJ_{<}(\bu)\right) \\
            \left(-\operatorname{Sign}(u_i^{\star} u_j^{\star}):i\in\bbJ_{<}(\bu),\,j\in\bbJ_{=}(\bu)\right) & \left(-\operatorname{Sign}(u_i^{\star} u_j^{\star}):i\in\bbJ_{<}(\bu),\,j\in\bbJ_{<}(\bu)\right)
        \end{pmatrix}.
    $$
    Suppose that $\bu$ is stationary, then we must have
    $$
        -\operatorname{Sign}(u_i^{\star})\cdot(\operatorname{Sign}(\bu^{\star}))^{\T}\bu=0\iff (\operatorname{Sign}(\bu^{\star}))^{\T}\bu=0,\quad\text{for all $i\in\bbJ_{<}(\bu)$}.
    $$
    We next claim the following set inclusion
    $$
        \left(-\operatorname{Sign}(u_i^{\star} u_j^{\star}):i\in\bbJ_{=}(\bu),\,j\in\bbJ_{=}(\bu)\right)\in\left(\operatorname{Sign}(u_i u_j -u_i^{\star} u_j^{\star}):i\in\bbJ_{=}(\bu),\,j\in\bbJ_{=}(\bu)\right),
    $$
    which will validate the converse direction, as desired. This is done in the following discussions:
    \begin{itemize}[leftmargin=*]
        \item Suppose that $\bbJ_{=,\sim}(\bu)=\bbJ_{=}(\bu)$. Then
        $$
            \left(\operatorname{Sign}(u_i u_j -u_i^{\star} u_j^{\star}):i\in\bbJ_{=}(\bu),\,j\in\bbJ_{=}(\bu)\right)=[-1,1]^{|\bbJ_{=}(\bu)|\times |\bbJ_{=}(\bu)|},
        $$
        which covers the desired element vacuously.
        \item Suppose that $\bbJ_{=,\centernot{\sim}}(\bu)=\bbJ_{=}(\bu)$. Then
        $$
            \left(\operatorname{Sign}(u_i u_j -u_i^{\star} u_j^{\star}):i\in\bbJ_{=}(\bu),\,j\in\bbJ_{=}(\bu)\right)=\left(-\operatorname{Sign}(u_i^{\star} u_j^{\star}):i\in\bbJ_{=}(\bu),\,j\in\bbJ_{=}(\bu)\right),
        $$
        which is exactly the interested element, and the desired claim thus holds true.
        \item Suppose that neither of them happens. Then, the interested set has the following structure
        $$
            \begin{pmatrix}
                [-1,1]^{|\bbJ_{=,\sim}(\bu)|\times|\bbJ_{=,\sim}(\bu)|} & \left(-\operatorname{Sign}(u_i^{\star} u_j^{\star}):i\in\bbJ_{=,\sim}(\bu),\,j\in\bbJ_{=,\centernot{\sim}}(\bu)\right) \\
                \left(-\operatorname{Sign}(u_i^{\star} u_j^{\star}):i\in\bbJ_{=,\centernot{\sim}}(\bu),\,j\in\bbJ_{=,\sim}(\bu)\right)  & [-1,1]^{|\bbJ_{=,\centernot{\sim}}(\bu)|\times|\bbJ_{=,\centernot{\sim}}(\bu)|}
            \end{pmatrix},
        $$
        which, serving as an interpolation between the above two cases, is also desirable.
    \end{itemize}
    \item $[n]=\bbJ_{>}(\bu)\cup\bbJ_{<}(\bu)$ with $|\bbJ_{>}(\bu)|,|\bbJ_{<}(\bu)|\neq 0$: In this case, the situation becomes even more complicated. Therefore, we shall first exempt $\bbJ_{<,0}(\bu)$ from the analysis and assume that $|\bbJ_{<,0}(\bu)|=0$ in what follows; this will be revisited later. Let us divide the analysis into the following points:
    \begin{itemize}[leftmargin=*]
        \item Suppose that $|\bbJ_{>,\sim}(\bu)|,|\bbJ_{>,\centernot{\sim}}(\bu)|,|\bbJ_{<,\sim}(\bu)|,|\bbJ_{<,\centernot{\sim}}(\bu)|\neq 0$. Then, we observe $\Signuuuu$ has the following structure
    $$
        \begin{pmatrix}
            \Big(\operatorname{Sign}(u_i u_j):i\in\bbJ_{>}(\bu),\,j\in\bbJ_{>}(\bu)\Big) & \bbT\subseteq[-1,1]^{|\bbJ_{>}(\bu)|\times|\bbJ_{<}(\bu)|} \\
            \bbH\subseteq[-1,1]^{|\bbJ_{<}(\bu)|\times|\bbJ_{>}(\bu)|} & \left(-\operatorname{Sign}(u_i^{\star} u_j^{\star}):i\in\bbJ_{<}(\bu),\,j\in\bbJ_{<}(\bu)\right)
        \end{pmatrix},
    $$
    where $\bbT$ and $\bbH$ further have the following structures, respectively
    $$
        \begin{pmatrix}
            \bbT_{1 1}\subseteq[-1,1]^{|\bbJ_{>,\sim}(\bu)|\times |\bbJ_{<,\sim}(\bu)|} & \left(-\operatorname{Sign}(u_i^{\star} u_j^{\star}):i\in\bbJ_{>,\sim}(\bu),\,j\in\bbJ_{<,\centernot{\sim}}(\bu)\right)\\
            \left(-\operatorname{Sign}(u_i^{\star} u_j^{\star}):i\in\bbJ_{>,\centernot{\sim}}(\bu),\,j\in\bbJ_{<,\sim}(\bu)\right) & \bbT_{2 2}\subseteq[-1,1]^{|\bbJ_{>,\centernot{\sim}}(\bu)|\times |\bbJ_{<,\centernot{\sim}}(\bu)|}
        \end{pmatrix},
    $$
    and
    $$
        \begin{pmatrix}
            \bbH_{1 1}\subseteq[-1,1]^{|\bbJ_{<,\sim}(\bu)|\times |\bbJ_{>,\sim}(\bu)|} & \left(-\operatorname{Sign}(u_i^{\star} u_j^{\star}):i\in\bbJ_{<,\sim}(\bu),\,j\in\bbJ_{>,\centernot{\sim}}(\bu)\right)\\
            \left(-\operatorname{Sign}(u_i^{\star} u_j^{\star}):i\in\bbJ_{<,\centernot{\sim}}(\bu),\,j\in\bbJ_{>,\sim}(\bu)\right) & \bbH_{2 2}\subseteq[-1,1]^{|\bbJ_{<,\centernot{\sim}}(\bu)|\times |\bbJ_{>,\centernot{\sim}}(\bu)|}
        \end{pmatrix}.
    $$
    Suppose that $\bu$ is stationary. Then, we must have the following system
    $$
        \begin{dcases}
            \sum_{j\in\bbJ_{<,\sim}(\bu)}|u_j|\ge\left|\operatorname{Sign}(u_i^{\star})\cdot\left(\sum_{j\in\bbJ_{>,\sim}(\bu)}|u_j|+\sum_{j\in\bbJ_{>,\centernot{\sim}}(\bu)}|u_j|+\sum_{j\in\bbJ_{<,\centernot{\sim}}(\bu)}|u_j|\right)\right|, & i\in\bbJ_{>,\sim}(\bu), \\
            \sum_{j\in\bbJ_{<,\centernot{\sim}}(\bu)}|u_j|\ge\left|-\operatorname{Sign}(u_i^{\star})\cdot\left(\sum_{j\in\bbJ_{>,\sim}(\bu)}|u_j|+\sum_{j\in\bbJ_{>,\centernot{\sim}}(\bu)}|u_j|+\sum_{j\in\bbJ_{<,\sim}(\bu)}|u_j|\right)\right|, & i\in\bbJ_{>,\centernot{\sim}}(\bu),
        \end{dcases}
    $$
    which together imply
    $$
        \sum_{j\in\bbJ_{>,\sim}(\bu)}|u_j|+\sum_{j\in\bbJ_{>,\centernot{\sim}}(\bu)}|u_j|=0,
    $$
    and thus $u_i=0$ for every $i\in\bbJ_{>}(\bu)$, a contradiction. Therefore, $\bu$ can never be stationary.
    
    \item Suppose that $|\bbJ_{>,\sim}(\bu)|,|\bbJ_{>,\centernot{\sim}}(\bu)|\neq 0$ but $\bbJ_{<}(\bu)=\bbJ_{<,\sim}(\bu)$. Then, we observe $\Signuuuu$ has the following structure
    $$
        \begin{pmatrix}
            \multicolumn{3}{c}{\bbT\subseteq [-1,1]^{|\bbJ_{>,\sim}(\bu)|\times n}} \\
            \multicolumn{2}{c}{\Big(\operatorname{Sign}(u_i u_j):i\in\bbJ_{>,\centernot{\sim}}(\bu),\,j\in\bbJ_{>}(\bu)\Big)} & \left(-\operatorname{Sign}(u_i^{\star} u_j^{\star}):i\in\bbJ_{>,\centernot{\sim}}(\bu),\,j\in\bbJ_{<}(\bu)\right) \\
            \multicolumn{3}{c}{\bbH\subseteq [-1,1]^{|\bbJ_{<}(\bu)|\times n}}
        \end{pmatrix},
    $$
    where $\bbT$ and $\bbH$ are something unimportant. Therefore, for $\bu$ to be stationary, we must require
    $$
        0=\operatorname{Sign}(u_i)\cdot\sum_{j\in\bbJ_{>}(\bu)}|u_j|+\operatorname{Sign}(u_i)\cdot\sum_{j\in\bbJ_{<}(\bu)}|u_j|=\operatorname{Sign}(u_i)\cdot\|\bu\|_1,\quad\text{for any $i\in\bbJ_{>,\centernot{\sim}}(\bu)$},
    $$
    which is impossible. Therefore, $\bu$ can never be stationary. As the analysis for the case where $|\bbJ_{>,\sim}(\bu)|,|\bbJ_{>,\centernot{\sim}}(\bu)|\neq 0$ but $\bbJ_{<}(\bu)=\bbJ_{<,\centernot{\sim}}(\bu)$ is totally symmetric to this one, it is omited in the sequel for succinctness.
    
    \item Suppose that $|\bbJ_{<,\sim}(\bu)|,|\bbJ_{<,\centernot{\sim}}(\bu)|\neq 0$ but $\bbJ_{>}(\bu)=\bbJ_{>,\sim}(\bu)$. Then, we observe $\Signuuuu$ has the following structure
    $$
        \begin{pmatrix}
            \Big(\operatorname{Sign}(u_i u_j):i\in\bbJ_{>}(\bu),\,j\in\bbJ_{>}(\bu)\Big) & \bbT\subseteq[-1,1]^{|\bbJ_{>}(\bu)|\times|\bbJ_{<}(\bu)|} \\
            \bbH\subseteq[-1,1]^{|\bbJ_{<}(\bu)|\times|\bbJ_{>}(\bu)|} & \left(-\operatorname{Sign}(u_i^{\star} u_j^{\star}):i\in\bbJ_{<}(\bu),\,j\in\bbJ_{<}(\bu)\right)
        \end{pmatrix},
    $$
    where $\bbT$ and $\bbH$ further have the following structures, respectively
    $$
        \begin{pmatrix}
            \bbT_{1}\subseteq[-1,1]^{|\bbJ_{>}(\bu)|\times |\bbJ_{<,\sim}(\bu)|} & \left(-\operatorname{Sign}(u_i^{\star} u_j^{\star}):i\in\bbJ_{>}(\bu),\,j\in\bbJ_{<,\centernot{\sim}}(\bu)\right)
        \end{pmatrix},
    $$
    and
    $$
        \begin{pmatrix}
            \bbH_{1}\subseteq[-1,1]^{|\bbJ_{<,\sim}(\bu)|\times |\bbJ_{>}(\bu)|}\\
            \left(-\operatorname{Sign}(u_i^{\star} u_j^{\star}):i\in\bbJ_{<,\centernot{\sim}}(\bu),\,j\in\bbJ_{>}(\bu)\right)
        \end{pmatrix}.
    $$
    Suppose that $\bu$ is stationary. 
    Then, it must hold that
    $$
        -\operatorname{Sign}(u_i^{\star})\cdot(\operatorname{Sign}(\bu^{\star}))^{\T}\bu=0\iff(\operatorname{Sign}(\bu^{\star}))^{\T}\bu=0,\quad\text{for any $i\in\bbJ_{<,\centernot{\sim}}(\bu)$}.
    $$
    Besides, the following system must also hold true
    $$
        \begin{dcases}
            \sum_{j\in\bbJ_{<,\sim}(\bu)}|u_j|\ge\left|\operatorname{Sign}(u_i)\cdot\left(\sum_{j\in\bbJ_{>}(\bu)}|u_j|+\sum_{j\in\bbJ_{<,\centernot{\sim}}(\bu)}|u_j|\right)\right|,& i\in\bbJ_{>}(\bu), \\
            \sum_{j\in\bbJ_{>}(\bu)}|u_j|\ge\left|-\operatorname{Sign}(u_i^{\star})\cdot\left(\sum_{j\in\bbJ_{<,\sim}(\bu)}|u_j|-\sum_{j\in\bbJ_{<,\centernot{\sim}}(\bu)}|u_j|\right)\right|,& i\in\bbJ_{<,\sim}(\bu).
        \end{dcases}
    $$
    It is clear that the second equation holds true vacuously, as we already know
    $$
        0=(\operatorname{Sign}(\bu^{\star}))^{\T}\bu=\sum_{j\in\bbJ_{>}(\bu)}|u_j|+\sum_{j\in\bbJ_{<,\sim}(\bu)}|u_j|-\sum_{j\in\bbJ_{<,\centernot{\sim}}(\bu)}|u_j|.
    $$
    However, the first equation together with the above identity further implies
    $$
        -\sum_{j\in\bbJ_{>}(\bu)}|u_j|=\sum_{j\in\bbJ_{<,\sim}(\bu)}|u_j|-\sum_{j\in\bbJ_{<,\centernot{\sim}}(\bu)}|u_j|\ge\sum_{j\in\bbJ_{>}(\bu)}|u_j|\implies \sum_{j\in\bbJ_{>}(\bu)}|u_j|\le 0,
    $$
    and therefore $u_i=0$ for every $i\in\bbJ_{>}(\bu)$, a contradiction. Thus, $\bu$ can never be stationary.
    Due to the same reason, the symmetric case is also waived from our discussions.
    
    \item Suppose that $\bbJ_{>}(\bu)=\bbJ_{>,\sim}(\bu)$ and $\bbJ_{<}(\bu)=\bbJ_{<,\sim}(\bu)$ simultaneously. Then, it is easy to observe that $\Signuuuu$ has the following structure
    $$
        \begin{pmatrix}
            \Big(\operatorname{Sign}(u_i u_j):i\in\bbJ_{>}(\bu),\,j\in\bbJ_{>}(\bu)\Big) & \bbT\subseteq[-1,1]^{|\bbJ_{>}(\bu)|\times|\bbJ_{<}(\bu)|} \\
            \bbH\subseteq[-1,1]^{|\bbJ_{<}(\bu)|\times|\bbJ_{>}(\bu)|} & \left(-\operatorname{Sign}(u_i^{\star} u_j^{\star}):i\in\bbJ_{<}(\bu),\,j\in\bbJ_{<}(\bu)\right)
        \end{pmatrix},
    $$
    where $\bbT$ and $\bbH$ are something not of interest. Suppose that $\bu$ is stationary. Then, we must have the following system
    $$
        \begin{dcases}
            \sum_{j\in\bbJ_{<}(\bu)}|u_j|\ge\sum_{j\in\bbJ_{>}(\bu)}|u_j|, & i\in\bbJ_{>}(\bu), \\
            \sum_{j\in\bbJ_{>}(\bu)}|u_j|\ge\sum_{j\in\bbJ_{<}(\bu)}|u_j|, & i\in\bbJ_{<}(\bu),
        \end{dcases}
        \implies \sum_{j\in\bbJ_{>}(\bu)}|u_j|=\sum_{j\in\bbJ_{<}(\bu)}|u_j|.
    $$
    Therefore, the only possible element $\bZ\in\Signuuuu\supseteq\operatorname{Sign}(\bu\bu^{\T}-\bu^{\star}{\bu^{\star}}^{\T})\cap\R^{n\times n}_{\operatorname{sym}}$ that satisfies $\bZ\bu=\bd{0}$ has to be the following matrix
    $$
        \begin{pmatrix}
            \Big(\operatorname{Sign}(u_i u_j):i\in\bbJ_{>}(\bu),\,j\in\bbJ_{>}(\bu)\Big) & \Big(-\operatorname{Sign}(u_i u_j):i\in\bbJ_{>}(\bu),\,j\in\bbJ_{<}(\bu)\Big) \\
            \left(\operatorname{Sign}(u_i^{\star} u_j^{\star}):i\in\bbJ_{<}(\bu),\,j\in\bbJ_{>}(\bu)\right) & \left(-\operatorname{Sign}(u_i^{\star} u_j^{\star}):i\in\bbJ_{<}(\bu),\,j\in\bbJ_{<}(\bu)\right)
        \end{pmatrix}\notin\R^{n\times n}_{\operatorname{sym}}.
    $$
    Since $\operatorname{Sign}(\bu\bu^{\T}-\bu^{\star}{\bu^{\star}}^{\T})\cap\R^{n\times n}_{\operatorname{sym}}$ contains only symmetric matrices, this leads to a contradiction. As a result, $\bu$ can never be stationary. We can apply the above analysis symmetrically to study the case where $\bbJ_{>}(\bu)=\bbJ_{>,\centernot{\sim}}(\bu)$ and $\bbJ_{<}(\bu)=\bbJ_{<,\centernot{\sim}}(\bu)$ simultaneously, and it is thus omitted to avoid redundancy.
    \item Suppose that $\bbJ_{>}(\bu)=\bbJ_{>,\sim}(\bu)$ and $\bbJ_{<}(\bu)=\bbJ_{<,\centernot{\sim}}(\bu)$ simultaneously. Then, it can be easily checked that $\Signuuuu$ equals the following matrix
    $$
        \begin{pmatrix}
            \Big(\operatorname{Sign}(u_i u_j):i\in\bbJ_{>}(\bu),\,j\in\bbJ_{>}(\bu)\Big) & \Big(-\operatorname{Sign}(u_i^{\star} u_j^{\star}):i\in\bbJ_{>}(\bu),\,j\in\bbJ_{<}(\bu)\Big) \\
            \left(-\operatorname{Sign}(u_i^{\star} u_j^{\star}):i\in\bbJ_{<}(\bu),\,j\in\bbJ_{>}(\bu)\right) & \left(-\operatorname{Sign}(u_i^{\star} u_j^{\star}):i\in\bbJ_{<}(\bu),\,j\in\bbJ_{<}(\bu)\right)
        \end{pmatrix}.
    $$
    Since we know $-\operatorname{Sign}(u_i^{\star} u_j^{\star})=\operatorname{Sign}(u_i u_j)$ for any $i\in\bbJ_{>}(\bu),\,j\in\bbJ_{<}(\bu)$, it follows that $\bu$ can never be stationary due to the same reason as discussed in the case where $[n]=\bbJ_{>}(\bu)\cup\bbJ_{=}(\bu)$ with $|\bbJ_{>}(\bu)|,|\bbJ_{=}(\bu)|\neq 0$. The symmetric case should be clear as well.
    \end{itemize}
    As the above discussions cover all possible cases where $[n]=\bbJ_{>}(\bu)\cup\bbJ_{<}(\bu)$ with $|\bbJ_{>}(\bu)|,|\bbJ_{<}(\bu)|\allowbreak\neq 0$ and $|\bbJ_{<,0}(\bu)|=0$, all it remains is to discuss how to handle the situation where $|\bbJ_{<,0}(\bu)|>0$. Suppose this happens. Since $u_i=0$ for every $i\in\bbJ_{<,0}(\bu)$, as a necessary condition for $\bu$ to be stationary, we must require that there exists some $\bZ\in\operatorname{Sign}(\bu\bu^{\T}-\bu^{\star}{\bu^{\star}}^{\T})\cap\R^{n\times n}_{\operatorname{sym}}$, which, after removing its rows and columns corresponding to $\bbJ_{<,0}(\bu)$, has the dimension-reduced version of $\bu$ with all zeros removed, namely $\bu_{[n]\setminus\bbJ_{<,0}(\bu)}$, inside its kernel. This has reduced the situation to the one where $|\bbJ_{<,0}(\bu)|=0$, which we have just studied before.
    (It is worth noting that when $\bbJ_{<}(\bu)=\bbJ_{<,0}(\bu)$ the problem will be reduced to the one where $[n]=\bbJ_{>}(\bu)$.) 
    Therefore, we can conclude $\bu$ can never be stationary in this case as well.

    \item $[n]=\bbJ_{>}(\bu)\cup\bbJ_{=}(\bu)\cup\bbJ_{<}(\bu)$ with $|\bbJ_{>}(\bu)|,|\bbJ_{=}(\bu)|,|\bbJ_{<}(\bu)|\neq 0$: 
    Actually, the situation in this case is very similar to that of the previous one where $[n]=\bbJ_{>}(\bu)\cup\bbJ_{<}(\bu)$ with $|\bbJ_{>}(\bu)|,|\bbJ_{<}(\bu)|\neq 0$, but it unfortunately becomes too complicated and tedious to be presentable. Hence, we directly conclude $\bu$ can never be stationary in this case as well and leave the details to interested readers; with the machinery developed so far, this should be as easy as a routine exercise.
    \end{itemize}
    Combining the above discussions reveals that the only possibility for $\bu$ to be stationary is to either have $\bu=\pm\bu^{\star}$ or simultaneously satisfy $|u_i|\le|u^{\star}_i|$ for every $i\in[n]$ and $(\operatorname{Sign}(\bu^{\star}))^{\T}\bu=0$, both of which are invariant under simultaneous permutations of $\bu$ and $\bu^{\star}$. Besides, the corresponding converse implications of the above conditions have also been verified. These, together with the remarks made at the beginning of the proof, complete the whole proof as desired.
\end{proof}

It is worth noting that besides for our purposes, Theorem~\ref{thm:char-critical-r1} can also facilitate the analysis of $f$ from many other perspectives, such as characterizing the Kurdyka-Łojasiewicz exponent at each of its stationary points, which has not been covered by a standard application of existing calculi~\cite{li2018calculus,yu2022kurdyka}. In the sequel, we call $\bu\in\R^n$ a spurious stationary point if it is stationary but different from any of the ground-truths $\pm\bu^{\star}$. We remark that the set of stationary points of $f$ as revealed in Theorem~\ref{thm:char-critical-r1} is clearly much more intricate than its $\ell_2$-norm counterpart, which has only three (and thus finitely many) elements, namely $\pm\bu^{\star}$ and $\bd{0}$ (see, e.g.,~\cite[Theorem~2]{chi2019nonconvex} and~\cite[Theorem~3]{li2019symmetry}). 
Despite this, it is worth noting that
the separation between the spurious stationary points and ground-truths of $f$ can still be rather larger, at least when $\bu^{\star}$ is standard Gaussian.

\begin{remark}
    Suppose that $\bu^{\star}$ is standard Gaussian. Then, standard computation shows
    $$
        \mathbf{E}\left(\operatorname{dist}(\pm\bu^{\star},\{\bu:(\operatorname{Sign}(\bu^{\star}))^{\T}\bu=0\})\right)=\mathbf{E}\left(\frac{(\operatorname{Sign}(\bu^{\star}))^{\T}\bu^{\star}}{\|\operatorname{Sign}(\bu^{\star})\|}\right)=\mathbf{E}\left(\frac{\|\bu^{\star}\|_1}{\sqrt{n}}\right)=\sqrt{\frac{2n}{\pi}}.
    $$
\end{remark}

\begin{wrapfigure}[11]{r}{0.4\linewidth}
\vspace{-1em}
    \centering
    \includegraphics[width=\linewidth]{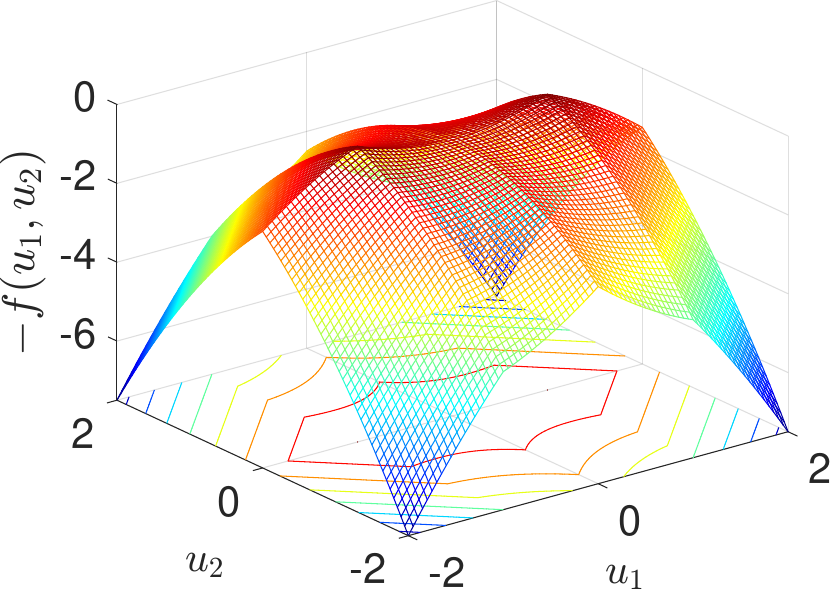}
    \caption{The function plot of $-f(\bu)$.}
    \label{fig:positive-curvature}
\end{wrapfigure}
Besides the above observation, at this place, we would like to exhibit another interesting distinction between $f$ and its $\ell_2$-norm counterpart: It is well understood that the Hessian matrix of the $\ell_2$-norm counterpart of $f$ is negative semidefinite at the origin (see, e.g.,~\cite[Theorem~2]{chi2019nonconvex},~\cite[Theorem~3]{li2019symmetry}, and also~\cite[Section~3.3]{li2019non}), but as the following example shows, this fact is no longer true in the $\ell_1$-norm case. (For a better and intuitive understanding of this phenomenon, we have visualized the function considered in the following example in the right panel. We remark that this visualization can also be used to verify the correctness of Theorem~\ref{thm:char-critical-r1} in this special case.)

\begin{example}\label{ex:positive-curvature}
    Suppose that $\bu^{\star}=(1,0)^{\T}$ and $\bw=(0,1)^{\T}$. Then, 
    by the reasoning and computations to be carried out in Theorem~\ref{thm:main-rank-one-outlier-free} later, we know
    $$
        d^2{f(\bd{0};\bd{0})}(\bw)=\max\left\{q_{22}:\bQ\in\R^{2\times 2},\,q_{11}=-1,\,q_{12},q_{22}\in[-1,1],\,q_{21}=q_{12}\right\}=1>0.
    $$
\end{example}

\subsection{The local functional behavior around the ground-truths}
With the set of stationary points of $f$ well understood, our next step is to analyze the local behavior of $f$ around each stationary point, with the start being the ground-truths. Before we proceed, we first remark that as $f$ is strictly continuous and subdifferentially regular everywhere, we know from~\cite[Theorem~9.16]{rockafellar2009variational} that $f$ is directionally differentiable, and we are thus legitimate to discuss its directional derivative. Due to symmetry, we only consider one of the ground-truths, namely $\bu^{\star}$.

\begin{proposition}\label{prop:rank-one-l1-sharpness}
    Suppose that $\bu^\star\in\R^n\setminus\{\bd{0}\}$. Then, it holds for any $\bw\in\R^n$ that
    $$
        d{f(\bu^{\star})}(\bw)\ge\min\left\{\alpha(\bu^{\star}),\,\frac{1}{2}\alpha(\bu^{\star})\cdot|\operatorname{supp}(\bu^{\star})|\right\}\cdot\|\bw\|_1,
    $$
    where $\alpha(\bu^{\star}):=\min\{|\bu_i^{\star}|:i\in\operatorname{supp}(\bu^{\star})\}>0$.
\end{proposition}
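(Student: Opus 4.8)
The plan is to first write $d{f(\bu^{\star})}(\bw)$ in closed form and then read off the bound from an entrywise estimate. Note $f(\bu^{\star})=0$, so expanding the rank-one perturbation gives, for every $t>0$,
$$
f(\bu^{\star}+t\bw)=\tfrac12\big\|t\big(\bw{\bu^{\star}}^{\T}+\bu^{\star}\bw^{\T}\big)+t^{2}\bw\bw^{\T}\big\|_{1}=\tfrac{t}{2}\big\|\bw{\bu^{\star}}^{\T}+\bu^{\star}\bw^{\T}+t\,\bw\bw^{\T}\big\|_{1}.
$$
Dividing by $t$ and letting $t\searrow0$, continuity of $\|\cdot\|_{1}$ yields
$$
d{f(\bu^{\star})}(\bw)=\tfrac12\big\|\bw{\bu^{\star}}^{\T}+\bu^{\star}\bw^{\T}\big\|_{1}=\sum_{i=1}^{n}\big|u^{\star}_{i}w_{i}\big|+\sum_{j>i}\big|u^{\star}_{i}w_{j}+u^{\star}_{j}w_{i}\big|.
$$
Alternatively, one may obtain this from $d{f(\bu^{\star})}(\bw)=\max\{\langle\bZ\bu^{\star},\bw\rangle:\bZ\in[-1,1]^{n\times n}\cap\R^{n\times n}_{\operatorname{sym}}\}$, which holds by subdifferential regularity of $f$ together with the expression for $\partial f$ recorded above, noting $\operatorname{Sign}(\bd{0})=[-1,1]^{n\times n}$ entrywise.

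Then I would lower-bound the right-hand side by discarding nonnegative terms, keeping different ones according to whether an index belongs to $S:=\operatorname{supp}(\bu^{\star})$. Since $|u^{\star}_{i}|\ge\alpha(\bu^{\star})$ for $i\in S$ and $u^{\star}_{i}=0$ otherwise, the diagonal sum obeys $\sum_{i}|u^{\star}_{i}w_{i}|\ge\alpha(\bu^{\star})\sum_{i\in S}|w_{i}|$. In the off-diagonal sum I would retain only the (distinct, hence counted-once) pairs $\{i,j\}$ with $i\notin S$ and $j\in S$; for such a pair $|u^{\star}_{i}w_{j}+u^{\star}_{j}w_{i}|=|u^{\star}_{j}|\,|w_{i}|$, and summing over $j\in S$ with $i\notin S$ fixed gives $\|\bu^{\star}\|_{1}\,|w_{i}|\ge\alpha(\bu^{\star})|S|\,|w_{i}|$. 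Combining,
$$
d{f(\bu^{\star})}(\bw)\ \ge\ \alpha(\bu^{\star})\sum_{i\in S}|w_{i}|+\alpha(\bu^{\star})|S|\sum_{i\notin S}|w_{i}|\ \ge\ \min\!\big\{\alpha(\bu^{\star}),\tfrac12\alpha(\bu^{\star})|S|\big\}\,\|\bw\|_{1},
$$
where the final inequality uses $|S|\ge1$, so each coefficient is at least the minimum. (This even yields $d{f(\bu^{\star})}(\bw)\ge\alpha(\bu^{\star})\|\bw\|_{1}$; the factor $\tfrac12$ in the statement is harmless slack, which is precisely what one gets if, instead of exploiting the symmetry of $\bw{\bu^{\star}}^{\T}+\bu^{\star}\bw^{\T}$ on the off-support block, one keeps only its whole rows: rows indexed by $S$ contribute at least $2\alpha(\bu^{\star})|w_i|$ through their diagonal entry, rows indexed outside $S$ contribute $\|\bu^{\star}\|_1|w_i|\ge\alpha(\bu^{\star})|S|\,|w_i|$, and the leading $\tfrac12$ halves the latter.)

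I do not anticipate a real obstacle. Obtaining the closed form in the first display is a one-line limit justified by continuity alone, and extracting the lower bound is pure bookkeeping; the only care needed is in the combinatorics of which matrix entries (or index pairs) to retain so that nothing is double counted and so that the factor $|S|$ legitimately appears — it arises because every off-support coordinate $w_{i}$ survives, via the cross term $u^{\star}_{j}w_{i}$, against each of the $|S|$ coordinates $u^{\star}_{j}$ with $j\in S$, each of magnitude at least $\alpha(\bu^{\star})$. Finally, the bound is non-vacuous since $S\neq\varnothing$ guarantees $\alpha(\bu^{\star})>0$.
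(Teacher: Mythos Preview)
Your proof is correct and follows the same overall structure as the paper's: derive the closed form $d{f(\bu^{\star})}(\bw)=\sum_i|u^{\star}_i w_i|+\sum_{j>i}|u^{\star}_i w_j+u^{\star}_j w_i|$, then lower-bound by splitting into support and non-support coordinates. The differences are minor but worth noting. First, you obtain the closed form by a direct limit using continuity of $\|\cdot\|_1$, which is more elementary than the paper's route through the variational formula $d{f(\bu^{\star})}(\bw)=\max\{\langle\bZ,\bu^{\star}\bw^{\T}\rangle:\bZ\in[-1,1]^{n\times n}\cap\R^{n\times n}_{\operatorname{sym}}\}$ and an explicit solution of the resulting linear program; you mention the latter as an alternative, so you are aware of both. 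Second, in the lower bound the paper writes the off-diagonal sum as $\tfrac12\sum_{j\neq i}$ over ordered pairs and then restricts to $i\in S$, $j\notin S$, thereby discarding the symmetric ordered pairs $(i,j)$ with $i\notin S$, $j\in S$ and incurring the factor $\tfrac12$; you instead work with unordered pairs and keep all cross pairs, which yields the sharper bound $d{f(\bu^{\star})}(\bw)\ge\alpha(\bu^{\star})\|\bw\|_1$ that you correctly flag. Your parenthetical diagnosis of where the paper's $\tfrac12$ comes from is accurate.
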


As a direct but powerful consequence of Proposition~\ref{prop:rank-one-l1-sharpness}, we observe through~\cite[Lemma~3.24]{bonnans2013perturbation} that the following first-order growth condition holds for $f$ at $\bu^{\star}$: There exists some universal constant $\beta>0$ (which depends on $\bu^{\star}$ only) such that
\begin{equation}\label{eq:l1-growth}
f(\bu)\ge f(\bu^{\star})+\beta\cdot\|\bu-\bu^{\star}\|_1,\quad\text{for every $\bu$ around $\bu^{\star}$}.
\end{equation}
This shows a very rapid growth rate (and in particular implies $\bu^{\star}$ is the minimum) therearound. Besides, (\ref{eq:l1-growth}) can be viewed as a stronger local version of the sharpness condition used in existing literature such as~\cite{davis2018subgradient,li2020nonconvex,davis2023stochastic} that is crucial for the local convergence rate of the subgradient method.
From another perspective, (\ref{eq:l1-growth}) can also be viewed as a stronger version of the well-known quadratic growth condition, which has equivalences to many interesting functional properties~\cite[Section~2]{rebjock2023fast} such as the Morse-Bott property~\cite[Definition~1]{rebjock2023fast}, the Polyak-Łojasiewicz inequality~\cite{lojasiewicz1963propriete,polyak1963gradient} (from which the Kurdyka-Łojasiewicz inequality~\cite{kurdyka1998gradients,attouch2010proximal} having even more equivalences to other functional properties~\cite[Theorem~18]{bolte2010characterizations} generalizes), and the error bound property~\cite{luo1993error}, and has led to a variety of remarkable algorithmic consequences (see, e.g.,~\cite{zhou2017unified,yue2019family,yue2019quadratic,li2022nonsmooth,zheng2023universal}). Owing to the above facts,
we believe Proposition~\ref{prop:rank-one-l1-sharpness} should be interesting in its own rights. 

\begin{proof}
    Because for $\bu=\bu^{\star}$ we have $\Signuuuu=[-1,1]^{n\times n}$, it follows from (\ref{eq:subdiff}) that
    $$
        \partial f(\bu^{\star})=\{\bZ\bu^{\star}:\bZ\in[-1,1]^{n\times n}\cap\R^{n\times n}_{\operatorname{sym}}\}.
    $$
    As a result, we know from~\cite[Theorem~9.16]{rockafellar2009variational} that
    $$
        d{f(\bu^{\star})}(\bw)=\max\left\{\langle\bZ,\bu^{\star}\bw^{\T}\rangle:\bZ\in[-1,1]^{n\times n}\cap\R^{n\times n}_{\operatorname{sym}}\right\},\quad\text{for any $\bw\in\R^n$},
    $$
    which is a linear program. This motivates us to study the following general linear program
    $$
        \max\left\{\langle\bZ,\bY\rangle:\bZ\in[-1,1]^{n\times n}\cap\R^{n\times n}_{\operatorname{sym}}\right\},
    $$
    where $\bY\in\R^{n\times n}$ is arbitrary. To begin with, we expand the above problem as
    $$
        \max\left\{\sum_{i=1}^n z_{i i}y_{i i}+\sum_{j>i} \left(z_{i j}y_{i j}+z_{j i}y_{j i}\right):\bZ\in[-1,1]^{n\times n}\cap\R^{n\times n}_{\operatorname{sym}}\right\}.
    $$
    Then, we analyze different subproblems respectively since they are independent of each other. It is easy to see that the optimal objective of the subproblem $\max\{z_{i i}y_{i i}: z_{i i}\in[-1,1]\}$ is $|y_{i i}|$. On the other hand, the other subproblem has the following equivalent reformulation
    $$
        \max\{z_{i j}y_{i j}+z_{j i}y_{j i}: z_{i j}=z_{j i},\,z_{i j},z_{j i}\in[-1,1]\}=\max\{z_{i j}(y_{i j}+y_{j i}): \,z_{i j}\in[-1,1]\},
    $$
    and thus the optimal objective is clearly $|y_{i j}+y_{j i}|$. Summarizing the above discussions shows
    $$
        \max\left\{\langle\bZ,\bY\rangle:\bZ\in[-1,1]^{n\times n}\cap\R^{n\times n}_{\operatorname{sym}}\right\}=\sum_{i=1}^n|y_{i i}|+\frac{1}{2}\sum_{j\neq i}|y_{i j}+y_{j i}|,
    $$
    which in particular implies
    $$
    \begin{aligned}
        d{f(\bu^{\star})}(\bw)&=\sum_{i=1}^n|u_i^{\star}w_i|+\frac{1}{2}\sum_{j\neq i}|u_i^{\star}w_j+u_j^{\star}w_i|
        \\&\ge\alpha(\bu^{\star})\cdot\left(\sum_{i\in\operatorname{supp}(\bu^{\star})}|w_i|\right)+\frac{1}{2}\sum_{i\in\operatorname{supp}(\bu^{\star})}\sum_{j\notin\operatorname{supp}(\bu^{\star})}|u_i^{\star}w_j+u_j^{\star}w_i|
        \\&\ge\alpha(\bu^{\star})\cdot\left(\sum_{i\in\operatorname{supp}(\bu^{\star})}|w_i|\right)+\frac{1}{2}\alpha(\bu^{\star})\cdot|\operatorname{supp}(\bu^{\star})|\cdot\left(\sum_{j\notin\operatorname{supp}(\bu^{\star})}|w_j|\right)
        \\&\ge\min\left\{\alpha(\bu^{\star}),\,\frac{1}{2}\alpha(\bu^{\star})\cdot|\operatorname{supp}(\bu^{\star})|\right\}\cdot\|\bw\|_1,
    \end{aligned}
    $$
    as desired.
\end{proof}

\subsection{An explicit characterization of the critical cone at spurious stationarities}
Before we proceed any further, as preparations for the subsequent analysis, we need to first present a technical lemma, and then derive a closed-form expression for the critical cone of $f$ at any spurious stationary point $\bu$, namely $\{\bw:d{f(\bu)}(\bw)=0\}$, where the lemma will be useful.
\begin{lemma}\label{lma:technical-1}
    It holds for any $s\neq 0$ that
    \begin{equation}\label{eq:technical-1}
        \left\{\begin{pmatrix}
            s\cdot p_1+q-r \\
            s\cdot p_2-q+r
        \end{pmatrix}:p_1,p_2,q,r\ge 0\right\}=\left\{\begin{pmatrix}
            x\\y
        \end{pmatrix}:s\cdot(x+y)\ge 0\right\}.
    \end{equation}
\end{lemma}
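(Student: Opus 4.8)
The plan is to prove the two set inclusions in (\ref{eq:technical-1}) separately, both by direct construction. The one structural observation that drives everything is that adding the two coordinate expressions cancels $q$ and $r$, leaving $s(p_1+p_2)$; hence $s$ times the sum of the two coordinates is precisely the quantity that controls which pairs $(x,y)$ can be produced. (Geometrically, the left-hand side is the image of the nonnegative orthant $\R_+^4$ under a linear map, hence a convex cone, and the right-hand side is a closed half-space through the origin, so the identity is at least plausible on sight.)

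For the inclusion ``$\subseteq$'', I would take arbitrary $p_1,p_2,q,r\ge 0$, put $x:=s p_1+q-r$ and $y:=s p_2-q+r$, and simply compute $s(x+y)=s^2(p_1+p_2)\ge 0$, since $s^2\ge 0$ and $p_1+p_2\ge 0$. This settles that direction at once.

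For the reverse inclusion ``$\supseteq$'', I would start from an arbitrary $(x,y)$ with $s(x+y)\ge 0$ and exhibit explicit nonnegative $p_1,p_2,q,r$. The point is that $q-r$ can realize any real value while keeping $q,r\ge 0$ (take $(q,r)=(x,0)$ if $x\ge 0$ and $(q,r)=(0,-x)$ if $x<0$), so I can use it to match the first coordinate exactly and let $p_1,p_2$ absorb the rest. Concretely, set $p_1:=0$ and $p_2:=(x+y)/s$, which is legitimate because $s\neq 0$, and is nonnegative because $p_2=s(x+y)/s^2\ge 0$ by hypothesis; then choosing $q,r\ge 0$ with $q-r=x$ as above gives $s p_1+q-r=x$ and $s p_2-q+r=(x+y)-x=y$, as required.

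I do not expect any genuine obstacle here: the argument is a short verification once one notices that the sum of the two coordinates is the only real constraint. The only points to state with a little care are the case split $x\ge 0$ versus $x<0$ when writing $x$ as a difference of two nonnegatives, and the (harmless) use of $s\neq 0$ both to divide by $s$ and, via $s^2>0$, to read off the sign of $p_2$ from the hypothesis $s(x+y)\ge 0$.
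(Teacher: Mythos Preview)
Your proposal is correct and follows essentially the same approach as the paper: both directions hinge on the cancellation $s(x+y)=s^2(p_1+p_2)$, and for the reverse inclusion you construct the same parameters $p_1=0$, $p_2=(x+y)/s$, with $q,r$ the positive and negative parts of $x$. The only cosmetic difference is that the paper assumes $s>0$ without loss of generality, whereas you handle both signs at once via $s^2>0$; the arguments are otherwise identical.
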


\begin{proof}
    Without loss of generality, we may assume that $s>0$. (The negative case can be handled in a symmetric way.) 
    The ``$\subseteq$'' direction should be clear, since we must have for any $p_1,p_2\ge 0$ that
    $$
        (s\cdot p_1+q-r)+(s\cdot p_2-q+r)=s\cdot(p_1+p_2)\ge 0.
    $$
    Conversely, suppose that we have some $(x,y)^{\T}$ with $x+y\ge 0$. Then, we manage to construct some $p_1,p_2,q,r\ge 0$ to represent $(x,y)^{\T}$ as in the left-hand-side set of (\ref{eq:technical-1}). Indeed, it is straightforward to check that the following parameterization 
    $$
        p_1=0\ge 0,\quad p_2=\frac{x+y}{s}\ge 0,\quad q=\frac{|x|+x}{2}\ge 0,\quad r=\frac{|x|-x}{2}\ge 0,
    $$
    suffices for the representation. This completes the proof.
\end{proof}

With Lemma~\ref{lma:technical-1} at hand, we next present the promised characterization of the critical cone.

\begin{proposition}\label{prop:critical-cone}
    Suppose that $\bu^\star\in\R^n\setminus\{\bd{0}\}$. Then, we have the following characterization of the critical cone of $f$ at any spurious stationary point $\bu$ that is nonzero
    $$
        \{\bw:d{f(\bu)}(\bw)=0\}=\prod_{j=1}^n
        \begin{dcases}
            \operatorname{Sign}(u_j)\cdot\R_-, & j\in\operatorname{supp}(\bu^{\star})\cap\bbJ_{=}(\bu), \\
            \R, & j\in\operatorname{supp}(\bu^{\star})\cap\bbJ_{<}(\bu), \\
            \{0\}, & j\notin\operatorname{supp}(\bu^{\star}),
        \end{dcases}
    $$
    and we simply have $\{\bw:d{f(\bd{0})}(\bw)=0\}=\R^n$.
\end{proposition}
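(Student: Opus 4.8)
The plan is to compute the directional derivative $d f(\bu)(\bw)$ in closed form at each stationary point and then read off exactly where it vanishes. Since $\bu$ stationary means $\bd{0}\in\partial f(\bu)$, we always have $d f(\bu)(\bw)=\max\{\langle\bg,\bw\rangle:\bg\in\partial f(\bu)\}\ge 0$, so $\{\bw:d f(\bu)(\bw)=0\}=\{\bw:d f(\bu)(\bw)\le 0\}$. The case $\bu=\bd{0}$ is immediate: $\partial f(\bd{0})=\big(\operatorname{Sign}(-\bu^{\star}{\bu^{\star}}^{\T})\cap\R^{n\times n}_{\operatorname{sym}}\big)\cdot\bd{0}=\{\bd{0}\}$, so $d f(\bd{0})(\bw)\equiv 0$ and the critical cone is all of $\R^{n}$.

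Now fix a nonzero spurious stationary point $\bu$. By Theorem~\ref{thm:char-critical-r1} we have $|u_i|\le|u^{\star}_i|$ for every $i$ (hence $\bbJ_{>}(\bu)=\varnothing$) and $(\operatorname{Sign}(\bu^{\star}))^{\T}\bu=0$. Partition $[n]=A\sqcup B\sqcup C\sqcup E$, where $A$ gathers the $i\in\operatorname{supp}(\bu^{\star})$ with $u_i=u^{\star}_i$, $B$ those with $u_i=-u^{\star}_i$, $C:=\bbJ_{<}(\bu)$ (automatically contained in $\operatorname{supp}(\bu^{\star})$), and $E:=[n]\setminus\operatorname{supp}(\bu^{\star})$ (on which $u_i=u^{\star}_i=0$). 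Writing $D_{ij}:=u_iu_j-u^{\star}_iu^{\star}_j$ and differentiating $f$ entrywise (each $t\mapsto|(u_i+tw_i)(u_j+tw_j)-u^{\star}_iu^{\star}_j|$ is one-sidedly differentiable at $t=0$) gives
$$
2\,d f(\bu)(\bw)=\sum_{D_{ij}\neq 0}\operatorname{Sign}(D_{ij})\,(u_iw_j+u_jw_i)+\sum_{D_{ij}=0}\big|u_iw_j+u_jw_i\big|,
$$
the sums running over all ordered pairs $(i,j)\in[n]^{2}$. A block-by-block check shows that $D_{ij}=0$ precisely for $(i,j)\in A^{2}\cup B^{2}$ and for pairs meeting $E$, while $D_{ij}\neq 0$ with $\operatorname{Sign}(D_{ij})=-\operatorname{Sign}(u^{\star}_iu^{\star}_j)$ on every other pair; the strict inequality $|u_i|<|u^{\star}_i|$ on $C$ is what pins these signs down.

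I would then substitute $\tilde u_i:=\operatorname{Sign}(u^{\star}_i)\,u_i$ and $\tilde w_i:=\operatorname{Sign}(u^{\star}_i)\,w_i$ for $i\in\operatorname{supp}(\bu^{\star})$ — equivalently, reduce to $\bu^{\star}$ positive on its support — so that stationarity becomes $\sum_{i\in\operatorname{supp}(\bu^{\star})}\tilde u_i=0$. Plugging into the displayed identity, the cross-block terms sum to $-2\big(\sum_{i\in\operatorname{supp}(\bu^{\star})}\tilde u_i\big)\big(\sum_{i\in\operatorname{supp}(\bu^{\star})}\tilde w_i\big)=0$ and drop out, leaving, with $a_i:=|u^{\star}_i|$, a decomposition $2\,d f(\bu)(\bw)=P_A(\bw)+P_B(\bw)+P_E(\bw)$ into nonnegative pieces:
$$
P_A(\bw)=\sum_{i\in A}2a_i\big(|\tilde w_i|+\tilde w_i\big)+\sum_{\substack{i,j\in A\\ i<j}}2\big(|a_i\tilde w_j+a_j\tilde w_i|+(a_i\tilde w_j+a_j\tilde w_i)\big),
$$
with $P_B$ obtained from $P_A$ by replacing $A$ with $B$ and $\tilde w$ with $-\tilde w$, and $P_E(\bw)=2\|\bu\|_1\sum_{j\in E}|w_j|$. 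Each summand of $P_A$ (resp.\ $P_B$) has the form $c(|x|+x)$ (resp.\ $c(|x|-x)$) with $c>0$, hence is $\ge 0$ and vanishes iff $x\le 0$ (resp.\ $x\ge 0$), while $\|\bu\|_1>0$ because $\bu\neq\bd{0}$. Therefore $d f(\bu)(\bw)=0$ iff $\tilde w_i\le 0$ on $A$, $\tilde w_i\ge 0$ on $B$, and $w_j=0$ on $E$ (the off-diagonal conditions in $P_A,P_B$ being automatic once the diagonal ones hold, as $a_i,a_j>0$). Undoing the substitution, $\operatorname{Sign}(u_i)=\operatorname{Sign}(u^{\star}_i)$ on $A$ and $\operatorname{Sign}(u_i)=-\operatorname{Sign}(u^{\star}_i)$ on $B$, so both conditions read $w_i\in\operatorname{Sign}(u_i)\cdot\R_-$; together with the free factor $\R$ on $C=\operatorname{supp}(\bu^{\star})\cap\bbJ_{<}(\bu)$ and the factor $\{0\}$ off $\operatorname{supp}(\bu^{\star})$, this is exactly the claimed product.

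The only genuine difficulty is the bookkeeping: verifying the block pattern of $\operatorname{Sign}(D_{ij})$ across the seven block types, and — the crux — observing that the stationarity identity $(\operatorname{Sign}(\bu^{\star}))^{\T}\bu=0$ is precisely what kills the cross-block contribution and renders $2\,d f(\bu)(\bw)$ a sum of three transparently nonnegative terms whose zero sets are read off coordinatewise. I also expect Lemma~\ref{lma:technical-1} to be useful along a more structural variant of this argument: after using stationarity, each sign-matched/sign-anti-matched pair of coordinates contributes to $\partial f(\bu)$ a parametrized set of the form $\{(sp_1+q-r,\,sp_2-q+r):p_1,p_2,q,r\ge 0\}$, which the lemma identifies with the half-space $\{(x,y):s(x+y)\ge 0\}$, so that the half-line factors $\operatorname{Sign}(u_i)\cdot\R_-$ fall out by polarity.
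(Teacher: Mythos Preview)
Your approach is correct and genuinely different from the paper's. The paper never computes $d f(\bu)(\bw)$ in closed form; instead it invokes the duality $\{\bw:d f(\bu)(\bw)=0\}=\N_{\partial f(\bu)}(\bd{0})$, writes $\partial f(\bu)$ explicitly as a Cartesian product of polyhedra (one block coming from $\bbJ_{=,\sim}$, one from $\bbJ_{=,\centernot{\sim}}$, one zero block from $\bbJ_<$, plus dummy coordinates from $[n]\setminus\operatorname{supp}(\bu^{\star})$), and then computes the normal cone at the origin block by block via polyhedral calculus. Lemma~\ref{lma:technical-1} is used precisely there, to identify the normal cone of the set $\{\bZ\bv+\|\bv\|_1\operatorname{Sign}(\bv):\bZ\in[-1,1]^{m\times m}\cap\R^{m\times m}_{\operatorname{sym}}\}$ at the origin. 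Your direct route --- expanding $d f(\bu)(\bw)$ entrywise and collapsing it into $P_A+P_B+P_E$ via the stationarity identity --- is more elementary, avoids all the normal-cone machinery, and in fact does not need Lemma~\ref{lma:technical-1} at all (your closing remark about the lemma is closer to the paper's method than to the one you actually executed). The paper's approach, on the other hand, also yields the explicit structure of $\partial f(\bu)$ along the way, which it reuses in the proof of Theorem~\ref{thm:main-rank-one-outlier-free}.

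One point of exposition: your sentence ``the cross-block terms sum to $-2\big(\sum_{i}\tilde u_i\big)\big(\sum_{i}\tilde w_i\big)=0$ and drop out'' is not literally what happens. The signed contribution over $S^{2}\setminus(A^{2}\cup B^{2})$ does not vanish; rather, extending it to all of $S^{2}$ gives $-2\big(\sum_{S}\tilde u_i\big)\big(\sum_{S}\tilde w_j\big)=0$, and the compensating pieces over $A^{2}$ and $B^{2}$ --- namely $2\big(\sum_{A}a_i\big)\big(\sum_{A}\tilde w_j\big)$ and $-2\big(\sum_{B}a_i\big)\big(\sum_{B}\tilde w_j\big)$ --- are exactly the linear parts of your $P_A$ and $P_B$. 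Your final decomposition $2\,d f(\bu)(\bw)=P_A+P_B+P_E$ is correct, but the cancellation deserves one more line in a full write-up.
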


\begin{proof}
    Similar to the proof of Theorem~\ref{thm:char-critical-r1}, we shall first be dealing with the problem under the assumption that $u^{\star}_i\neq 0$ for every $i\in[n]$; the remaining case will be handled afterwards. Due to the same reason as in the proof of Theorem~\ref{thm:char-critical-r1}, we further assume that $\bu$ and $\bu^{\star}$ have been simultaneously permuted such that $\bbJ_{=,\sim}(\bu)$, $\bbJ_{=,\centernot{\sim}}(\bu)$, and $\bbJ_{<}(\bu)$ form an \textit{ordered} partition of $[n]$. Let us begin with the most general case where $|\bbJ_{=,\sim}(\bu)|,|\bbJ_{=,\centernot{\sim}}(\bu)|\neq 0$ and revisit the boundary cases in the last; they will be rather clear after the general case has been fully understood. It can be easily observed that $\Signuuuu$ has the following structure in this case
    $$
        \begin{pmatrix}
            \bbT\subseteq[-1,1]^{|\bbJ_{=}(\bu)|\times|\bbJ_{=}(\bu)|} & \left(-\operatorname{Sign}(u_i^{\star} u_j^{\star}):i\in\bbJ_{=}(\bu),\,j\in\bbJ_{<}(\bu)\right) \\
            \left(-\operatorname{Sign}(u_i^{\star} u_j^{\star}):i\in\bbJ_{<}(\bu),\,j\in\bbJ_{=}(\bu)\right) & \left(-\operatorname{Sign}(u_i^{\star} u_j^{\star}):i\in\bbJ_{<}(\bu),\,j\in\bbJ_{<}(\bu)\right)
        \end{pmatrix},
    $$
    where $\bbT$ further admits the following structure
    $$
        \begin{pmatrix}
            [-1,1]^{|\bbJ_{=,\sim}(\bu)|\times|\bbJ_{=,\sim}(\bu)|} & \left(-\operatorname{Sign}(u_i^{\star} u_j^{\star}):i\in\bbJ_{=,\sim}(\bu),\,j\in\bbJ_{=,\centernot{\sim}}(\bu)\right) \\
            \left(-\operatorname{Sign}(u_i^{\star} u_j^{\star}):i\in\bbJ_{=,\centernot{\sim}}(\bu),\,j\in\bbJ_{=,\sim}(\bu)\right) & [-1,1]^{|\bbJ_{=,\centernot{\sim}}(\bu)|\times|\bbJ_{=,\centernot{\sim}}(\bu)|}
        \end{pmatrix}.
    $$
    With the above observation, it is routine to compute for any spurious stationary point $\bu$ that
    \begin{equation}\label{eq:rank-one-subdiff}
        \partial f(\bu)=\left\{\begin{pmatrix}
            \F_1(\bZ_1) \\ \F_2(\bZ_2) \\ \bd{0}
        \end{pmatrix}:
    \begin{aligned}
    &\bZ_1\in[-1,1]^{|\bbJ_{=,\sim}(\bu)|\times|\bbJ_{=,\sim}(\bu)|}\cap\R^{|\bbJ_{=,\sim}(\bu)|\times|\bbJ_{=,\sim}(\bu)|}_{\operatorname{sym}}
        \\&\bZ_2\in[-1,1]^{|\bbJ_{=,\centernot{\sim}}(\bu)|\times|\bbJ_{=,\centernot{\sim}}(\bu)|}\cap\R^{|\bbJ_{=,\centernot{\sim}}(\bu)|\times|\bbJ_{=,\centernot{\sim}}(\bu)|}_{\operatorname{sym}}
    \end{aligned}
    \right\},
    \end{equation}
    where we have used Theorem~\ref{thm:char-critical-r1} in computing each of the three blocks, and
    $$
        \F_1(\bZ_1):=\bZ_1\bu^{\star}_{\bbJ_{=,\sim}(\bu)}+\left\|\bu^{\star}_{\bbJ_{=,\sim}(\bu)}\right\|_1\cdot\operatorname{Sign}\left(\bu^{\star}_{\bbJ_{=,\sim}(\bu)}\right),
    $$
    $$
        \F_2(\bZ_2):=-\bZ_2\bu^{\star}_{\bbJ_{=,\centernot{\sim}}(\bu)}-\left\|\bu^{\star}_{\bbJ_{=,\centernot{\sim}}(\bu)}\right\|_1\cdot\operatorname{Sign}\left(\bu^{\star}_{\bbJ_{=,\centernot{\sim}}(\bu)}\right).
    $$
    Due to the fact that $\{\bw:d{f(\bu)}(\bw)=0\}=\N_{\partial f(\bu)}(\bd{0})$, which follows from the very definition, we next manage to characterize the normal cone of $\partial f(\bu)$ at the origin. It should be noted that by the representation (\ref{eq:rank-one-subdiff}), $\partial f(\bu)$ is the Cartesian product of three polyhedrons (where we have used the fact that the image of a polytope under a linear transformation is polyhedral~\cite[Theorem~19.3]{rock1997convex}). 
    As a result, by the product rule for computing normal cones~\cite[Proposition~6.41]{rockafellar2009variational} and the closedness of polyhedrons~\cite[Theorem~19.1]{rock1997convex}, it suffices to consider the three components in (\ref{eq:rank-one-subdiff}) respectively. Since $\N_{\{\bd{0}\}}(\bd{0})=\R^{|\bbJ_{<}(\bu)|}$ by convexity~\cite[Theorem~6.9]{rockafellar2009variational}, it remains to analyze the first two components. Careful readers may have already identified that they only differ a sign, and therefore let us abstract them out and analyze the normal cone of the following set at the origin
    $$
        \bbH(\bv):=\left\{\bZ\bv+\|\bv\|_1\cdot\operatorname{Sign}(\bv):\bZ\in[-1,1]^{n\times n}\cap\R^{n\times n}_{\operatorname{sym}}\right\},
    $$
    where $\bv\in\R^n$ is arbitrary but $\operatorname{supp}(\bv)=[n]$. Let us also make the following definition
    $$
        \bbH^{\prime}(\bv):=\left\{\bZ\bv:\bZ\in[-1,1]^{n\times n}\cap\R^{n\times n}_{\operatorname{sym}}\right\},
    $$
    which removes the intercept in $\bbH(\bv)$ for simplicity. Then, because $\bZ\bv=(\bI\boxtimes\bv^{\T})\operatorname{vec}(\bZ)$, we have by~\cite[Theorem~6.43]{rockafellar2009variational} that
    $$
        \N_{\bbH(\bv)}(\bd{0})=\N_{\bbH^{\prime}(\bv)}(-\|\bv\|_1\cdot\operatorname{Sign}(\bv))=\left\{\by:(\bI\boxtimes\bv)\by\in\N_{\operatorname{vec}([-1,1]^{n\times n}\cap\R^{n\times n}_{\operatorname{sym}})}\left(\operatorname{vec}(-\operatorname{Sign}(\bv\bv^{\T}))\right)\right\},
    $$
    where we have used the following facts that are easy to verify
    $$
        -\operatorname{Sign}(\bv\bv^{\T})\cdot\bv=-\|\bv\|_1\cdot\operatorname{Sign}(\bv)\quad\text{and}\quad -\operatorname{Sign}(\bv\bv^{\T})\in[-1,1]^{n\times n}\cap\R^{n\times n}_{\operatorname{sym}}.
    $$
    Because $\R^{n\times n}$ is isometrically isomorphic to $\R^{n^2}$, we next turn to study the representation for
    $$
        \N_{[-1,1]^{n\times n}\cap\R^{n\times n}_{\operatorname{sym}}}\left(-\operatorname{Sign}(\bv\bv^{\T})\right)\subseteq\R^{n\times n}.
    $$
    Notice that we can have a more explicit representation for the set
    $$
        [-1,1]^{n\times n}\cap\R^{n\times n}_{\operatorname{sym}}=\left\{\bZ:
        \begin{gathered}
            \langle\be_i\be_j^{\T},\bZ\rangle \le 1,\,\langle-\be_i\be_j^{\T},\bZ\rangle \le 1,\,\forall\,i,j\in[n] \\
            \langle\be_i\be_j^{\T},\bZ\rangle-\langle\be_j\be_i^{\T},\bZ\rangle\le 0,\,\langle\be_j\be_i^{\T},\bZ\rangle-\langle\be_i\be_j^{\T},\bZ\rangle\le 0,\,\forall\,j>i
        \end{gathered}
        \right\}.
    $$
    This, together with~\cite[Theorem~6.46]{rockafellar2009variational}, further implies $\N_{[-1,1]^{n\times n}\cap\R^{n\times n}_{\operatorname{sym}}}\big(-\operatorname{Sign}(\bv\bv^{\T})\big)$ equals
    $$
        \bbT:=\left\{-\sum_{i,j=1}^n p_{i j}\operatorname{Sign}(v_i v_j)\be_i\be_j^{\T}+\sum_{j>i}\left((q_{i j}-r_{i j})\cdot(\be_i\be_j^{\T}-\be_j\be_i^{\T})\right):p_{i j},q_{i j},r_{i j}\ge 0,\,\forall\, i,j\in[n]\right\}.
    $$
    We next give a more explicit representation for $\bbT$. It is easy to observe that $\bbT$ also admits some product structure. Indeed, the diagonal components of $\bbT$ are clearly independent of each other, and for each $i\in[n]$, we have
    $$
        \{t_{i i}:\bT\in\bbT\}=\{-p_{i i}\operatorname{Sign}(v_i^2):p_{i i}\ge 0\}=\R_{-}.
    $$
    On the other hand, it is clear that the off-diagonal component \textit{pairs} of $\bbT$ are independent of each other as well, and for each $j>i$, we have
    $$
    \begin{aligned}
        \left\{\begin{pmatrix}
            t_{i j} \\ t_{j i}
        \end{pmatrix}:\bT\in\bbT\right\}&=\left\{\begin{pmatrix}
            (-\operatorname{Sign}(v_i v_j))\cdot p_{i j}+q_{i j}-r_{i j} \\
            (-\operatorname{Sign}(v_j v_i))\cdot p_{j i}-q_{i j}+r_{i j}
        \end{pmatrix}:p_{i j},p_{j i},q_{i j},r_{i j}\ge 0\right\}
        \\&=\left\{\begin{pmatrix}
            x \\ y
        \end{pmatrix}:\operatorname{Sign}(v_i v_j)\cdot(x+y)\le 0\right\},
    \end{aligned}
    $$
    where in the last equality we have used Lemma~\ref{lma:technical-1}. Combining the above discussions leads to the following characterization
    $$
        \N_{[-1,1]^{n\times n}\cap\R^{n\times n}_{\operatorname{sym}}}\left(-\operatorname{Sign}(\bv\bv^{\T})\right)=\left\{\bT\in\R^{n\times n}:t_{i i}\le 0,\,\forall\,i\in[n],\,\operatorname{Sign}(v_i v_j)\cdot(t_{i j}+t_{j i})\le 0,\,\forall\,j>i\right\}.
    $$
    The above representation, together the fact that $(\bI\boxtimes\bv)\by=\operatorname{vec}(\bv\by^{\T})$, further implies
    $$
    \begin{aligned}
        \N_{\bbH(\bv)}(\bd{0})&=\left\{\by:\bv\by^{\T}\in\N_{[-1,1]^{n\times n}\cap\R^{n\times n}_{\operatorname{sym}}}\left(-\operatorname{Sign}(\bv\bv^{\T})\right)\right\}
        \\&=\left\{\by:v_i y_i\le 0,\,\forall\,i\in[n],\,\operatorname{Sign}(v_i v_j)\cdot(v_i y_j+v_j y_i)\le 0,\,\forall\,j>i\right\}
        \\&=\left\{\by:\operatorname{Sign}(v_i)\cdot y_i\le 0,\,\forall\,i\in[n],\,|v_i|\cdot\operatorname{Sign}(v_j)\cdot y_j+|v_j|\cdot\operatorname{Sign}(v_i)\cdot y_i\le 0,\,\forall\,j>i\right\}
        \\&=\left\{\by:\operatorname{Sign}(v_i)\cdot y_i\le 0,\,\forall\,i\in[n]\right\}=\prod_{i=1}^n(\operatorname{Sign}(v_i)\cdot\R_-),
    \end{aligned}
    $$
    which also admits a nice product structure. This general result, together with the scaling rule (which is a special case of~\cite[Theorem~6.43]{rockafellar2009variational}) and 
    the product rule~\cite[Proposition~6.41]{rockafellar2009variational} for computing normal cones, further implies
    $$
        \N_{\partial f(\bu)}(\bd{0})=\left(\prod_{j\in\bbJ_{=,\sim}(\bu)}\operatorname{Sign}(u^{\star}_j)\cdot\R_-\right)\times\left(\prod_{j\in\bbJ_{=,\centernot{\sim}}(\bu)}\operatorname{Sign}(u^{\star}_j)\cdot\R_+\right)\times\R^{|\bbJ_{<}(\bu)|},
    $$
    as desired. Now it is time to revisit the boundary cases where either $\bbJ_{=}(\bu)=\bbJ_{=,\sim}(\bu)$ or $\bbJ_{=}(\bu)=\bbJ_{=,\centernot{\sim}}(\bu)$. Actually, suppose that $\bbJ_{=}(\bu)=\bbJ_{=,\sim}(\bu)$. Then, we know the $\F_2(\bZ_2)$ term in (\ref{eq:rank-one-subdiff}) vanishes but the $\F_1(\bZ_1)$ term survived there with the same pattern. By applying the general result again, we see the above characterization of $\N_{\partial f(\bu)}(\bd{0})$ remains true for this case since $\prod_{j\in\bbJ_{=,\centernot{\sim}}(\bu)}\operatorname{Sign}(u^{\star}_j)\cdot\R_+$ now becomes nothing as $\bbJ_{=,\centernot{\sim}}(\bu)=\varnothing$. A symmetric argument shows this applies to the other case as well. With these materials established, all it remains is to simultaneously permute back $\bu$, $\bu^{\star}$, and $\N_{\partial f(\bu)}(\bd{0})$, and discuss the relationships between $\operatorname{Sign}(u_i^{\star})$ and $\operatorname{Sign}(u_i)$ for different $i\in[n]$, which exactly delivers the stated result specialized to this particular setting, as desired.

    As promised, we finally discuss how to handle the problem without the assumption made at the very beginning. Suppose that $\bu^{\star}$ has some zero coordinates. Because we know from Theorem~\ref{thm:char-critical-r1} that $u^{\star}_i=0$ implies $u_i=0$, standard computation shows each zero coordinate of $\bu^{\star}$ will contribute an independent dummy coordinate with value $\{\langle\bz,\bu\rangle:\bz\in[-1,1]^n\}$ to $\partial f(\bu)$. By convexity~\cite[Theorem~6.9]{rockafellar2009variational}, it should be rather clear that
    $$
        \N_{\{\langle\bz,\bu\rangle:\bz\in[-1,1]^n\}}(0)=\begin{dcases}
            \R, & \bu=\bd{0}, \\
            \{0\}, & \bu\neq\bd{0},
        \end{dcases}
    $$
    which, combined with the closedness of the set in (\ref{eq:rank-one-subdiff}) and the product rule for computing normal cones~\cite[Proposition~6.41]{rockafellar2009variational}, further implies the computation of the whole critical cone can be simply reduced to inserting the normal cones of these dummy coordinates as computed above after the remaining coordinates of the critical cone have all been computed, the latter of which has already been settled in the previous discussions. This completes the whole proof.
\end{proof}

\subsection{Putting everything together}
With the above preparations, we are now at a right place to accomplish the ultimate goal of this section by exhibiting ``negative curvatures'' at each of the spurious stationary points of $f$. Very naturally, for any such point $\bu$ of $f$, one may expect the directions pointing from $\bu$ to the ground-truths, namely $\bw:=\pm\bu^{\star}-\bu$, to be good candidates to fulfill the desired property. But to perform second-order analysis therealong, we must ensure that these directions are indeed within the critical cone of $f$, as otherwise the second subderivative $d^2{f(\bu;\bd{0})}(\bw)$ would explode to infinity; see~\cite[Proposition~13.5]{rockafellar2009variational}. The following lemma, which is a direct consequence of Proposition~\ref{prop:critical-cone}, points out this is indeed the case, desirably.

\begin{lemma}\label{lma:rank-one-w-critical}
    It holds for any spurious stationarity $\bu$ of $f$ that $d{f(\bu)}(\bw)=0$, where $\bw=\pm\bu^{\star}-\bu$.
\end{lemma}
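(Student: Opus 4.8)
The plan is to verify directly that $\bw=\pm\bu^{\star}-\bu$ lies in the critical cone $\{\bw:d{f(\bu)}(\bw)=0\}$, whose closed form is already furnished by Proposition~\ref{prop:critical-cone}. The case $\bu=\bd{0}$ is immediate: there the critical cone is all of $\R^n$, so $\bw=\pm\bu^{\star}$ trivially qualifies. Hence the substance lies in treating a nonzero spurious stationary point $\bu$, for which Theorem~\ref{thm:char-critical-r1} guarantees $|u_i|\le|u_i^{\star}|$ for every $i\in[n]$ together with $(\operatorname{Sign}(\bu^{\star}))^{\T}\bu=0$; in particular $\bbJ_{>}(\bu)=\varnothing$, so $[n]=\bbJ_{=}(\bu)\cup\bbJ_{<}(\bu)$, and $u_j^{\star}=0$ forces $u_j=0$.

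First I would dispose of the coordinates $j\notin\operatorname{supp}(\bu^{\star})$: there $u_j^{\star}=0$, hence $u_j=0$, so $w_j=\pm u_j^{\star}-u_j=0$, which matches the $\{0\}$-component mandated by Proposition~\ref{prop:critical-cone}. For $j\in\operatorname{supp}(\bu^{\star})\cap\bbJ_{<}(\bu)$ the characterization imposes no constraint (the component is $\R$), so there is nothing to check. The only real work concerns $j\in\operatorname{supp}(\bu^{\star})\cap\bbJ_{=}(\bu)$, where membership requires $\operatorname{Sign}(u_j)\cdot w_j\le 0$. For such $j$ we have $|u_j|=|u_j^{\star}|\neq 0$, so $u_j=u_j^{\star}$ if $j\in\bbJ_{=,\sim}(\bu)$ and $u_j=-u_j^{\star}$ if $j\in\bbJ_{=,\centernot{\sim}}(\bu)$. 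A short case split over the sign chosen in $\bw=\pm\bu^{\star}-\bu$ and over whether $j$ lies in $\bbJ_{=,\sim}(\bu)$ or $\bbJ_{=,\centernot{\sim}}(\bu)$ then shows that $w_j$ is either $0$ or equal to $-2u_j$; in either situation $\operatorname{Sign}(u_j)\cdot w_j\le 0$, as needed.

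Since the three bullet cases of Proposition~\ref{prop:critical-cone} exhaust all coordinates, $\bw$ belongs to the critical cone, i.e., $d{f(\bu)}(\bw)=0$, which is exactly the assertion. There is no genuine obstacle here: the lemma is essentially a corollary of Proposition~\ref{prop:critical-cone} combined with the structure of spurious stationary points from Theorem~\ref{thm:char-critical-r1}. The only point demanding care is the bookkeeping of the relationship between $\operatorname{Sign}(u_j)$ and $\operatorname{Sign}(u_j^{\star})$ on the coordinates in $\bbJ_{=}(\bu)$, together with the observation that the vanishing coordinates of $\bu^{\star}$ (and therefore of $\bu$) contribute trivially.
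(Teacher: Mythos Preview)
Your proposal is correct and follows essentially the same route as the paper: you invoke Proposition~\ref{prop:critical-cone} for the explicit form of the critical cone, use Theorem~\ref{thm:char-critical-r1} to conclude $w_j=0$ on coordinates outside $\operatorname{supp}(\bu^{\star})$, observe that the $\bbJ_{<}(\bu)$-coordinates are unconstrained, and then verify $w_j\in\operatorname{Sign}(u_j)\cdot\R_-$ on $\bbJ_{=}(\bu)$ via the same two-case split over $\bbJ_{=,\sim}(\bu)$ and $\bbJ_{=,\centernot{\sim}}(\bu)$, arriving at $w_j\in\{0,-2u_j\}$. The only cosmetic difference is that you treat the case $\bu=\bd{0}$ explicitly at the outset, whereas the paper folds it into a parenthetical remark.
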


\begin{proof}
    By virtue of Proposition~\ref{prop:critical-cone} and the fact that $\pm u^{\star}_j-u_j=0$ for all $j\notin\operatorname{supp}(\bu^{\star})$, which again follows from Theorem~\ref{thm:char-critical-r1}, it suffices to verify $\pm u^{\star}_j-u_j\in\operatorname{Sign}(u_j)\cdot\R_-$ for every $j\in\operatorname{supp}(\bu^{\star})\cap\bbJ_{=}(\bu)$. (We remark that by stating the above argument we have implicitly assumed that $|\operatorname{supp}(\bu^{\star})\cap\bbJ_{=}(\bu)|=0$. But this is without loss of generality, as otherwise the nonzero part of the critical cone would become the whole space, which contains everything, and the result thus holds vacuously.) We only need to discuss the following two cases:
    \begin{itemize}
        \item Suppose that $j\in\operatorname{supp}(\bu^{\star})\cap\bbJ_{=,\sim}(\bu)$, then we know $u^{\star}_j-u_j=0$ and $-u^{\star}_j-u_j=-2 u_j$, both of which are within $\operatorname{Sign}(u_j)\cdot\R_-$.
        \item Suppose that $j\in\operatorname{supp}(\bu^{\star})\cap\bbJ_{=,\centernot{\sim}}(\bu)$, then we know $u^{\star}_j-u_j=-2 u_j$ and $-u^{\star}_j-u_j=0$, both of which are within $\operatorname{Sign}(u_j)\cdot\R_-$.
    \end{itemize}
    Summarizing the above two cases leads to the desired claim.
\end{proof}

Equipped with Lemma~\ref{lma:rank-one-w-critical}, we are now ready to present the final results in this section.

\begin{theorem}\label{thm:main-rank-one-outlier-free}
    Suppose that $\bu^\star\in\R^n\setminus\{\bd{0}\}$. Then, for any spurious stationary point $\bu$ of $f$, we have for $\bw=\pm\bu^{\star}-\bu$ that
    $$
        d^2{f(\bu;\bd{0})}(\bw)=-\|\bu^{\star}\|_1^2<0.
    $$
\end{theorem}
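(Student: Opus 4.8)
The plan is to pin $d^2 f(\bu;\bd{0})(\bw)$ between matching bounds by comparing $f$ near $\bu$ with a single \emph{quadratic minorant} of $f$ that happens to touch $f$ all along the segment from $\bu$ to the ground truth. I would begin with two harmless reductions. Since $f(-\bx)=f(\bx)$, the point $-\bu$ is again a spurious stationary point by Theorem~\ref{thm:char-critical-r1}, and unwinding Definition~\ref{def:second-subderivative} gives $d^2 f(-\bu;\bd{0})(\bz)=d^2 f(\bu;\bd{0})(-\bz)$ for every $\bz$; hence it is enough to treat $\bw=\bu^\star-\bu$, the case $\bw=-\bu^\star-\bu$ following by applying the result to the spurious stationary point $-\bu$ with direction $\bu^\star-(-\bu)$. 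Moreover, conjugating the variable by the orthogonal diagonal matrix $\bd{D}$ with $\bd{D}_{ii}=\sign(u_i^\star)$ on $\operatorname{supp}(\bu^\star)$ and $\bd{D}_{ii}=1$ elsewhere turns $f$ into an instance of the same problem with ground truth $|\bu^\star|$, while preserving the second subderivative, the spurious stationary set, the direction $\bw$, and $\|\bu^\star\|_1$; so I may assume $\bu^\star\ge\bd{0}$ with $u_i^\star>0$ for $i\in\operatorname{supp}(\bu^\star)$.

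Next I would exhibit the minorant. Let $\bZ^\circ\in\R^{n\times n}_{\operatorname{sym}}$ be the matrix with $z^\circ_{ij}=-1$ when $i,j\in\operatorname{supp}(\bu^\star)$ and $z^\circ_{ij}=0$ otherwise --- the symmetric representative of $-\operatorname{Sign}(\bu^\star{\bu^\star}^\T)$ that already served as a stationarity certificate in the proof of Theorem~\ref{thm:char-critical-r1} --- and set $g(\bx):=\tfrac12\langle\bZ^\circ,\bx\bx^\T-\bu^\star{\bu^\star}^\T\rangle$, so that $\nabla g(\bx)=\bZ^\circ\bx$ and $\nabla^2 g\equiv\bZ^\circ$. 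Using $|u_i|\le|u_i^\star|$, the fact that $u_i=0$ for $i\notin\operatorname{supp}(\bu^\star)$, and $(\operatorname{Sign}(\bu^\star))^\T\bu=0$ (all from Theorem~\ref{thm:char-critical-r1}), one checks routinely that $\bZ^\circ\in[-1,1]^{n\times n}$, that $\bZ^\circ\in\operatorname{Sign}(\bu\bu^\T-\bu^\star{\bu^\star}^\T)$, and that $\bZ^\circ\bu=\bd{0}$. Because $\|\cdot\|_1$ is the support function of $[-1,1]^{n\times n}$, these give $f\ge g$ on $\R^n$, together with $f(\bu)=g(\bu)$ and $\nabla g(\bu)=\bd{0}$. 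The \emph{lower bound} on the second subderivative is then immediate: for all $\bw^{\prime}$ and $t>0$ one has $f(\bu+t\bw^{\prime})-f(\bu)\ge g(\bu+t\bw^{\prime})-g(\bu)=\tfrac12 t^2\,(\bw^{\prime})^{\T}\bZ^\circ\bw^{\prime}$, so dividing by $\tfrac12 t^2$ and passing to the $\liminf$ as $t\searrow0$, $\bw^{\prime}\to\bw$ yields $d^2 f(\bu;\bd{0})(\bw)\ge\bw^\T\bZ^\circ\bw$ (equivalently, $d^2 g(\bu;\bd{0})(\bw)=\bw^\T\nabla^2 g(\bu)\bw$ from~\cite[Example~13.8]{rockafellar2009variational} propagated through $f\ge g$).

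The crux is the matching \emph{upper bound}, for which I would show that $g$ stays tangent to $f$ along the ray toward $\bu^\star$, i.e.\ $f(\bu+t\bw)=g(\bu+t\bw)$ for all small $t>0$, equivalently $\bZ^\circ\in\operatorname{Sign}\big((\bu+t\bw)(\bu+t\bw)^\T-\bu^\star{\bu^\star}^\T\big)$. Writing $\bu+t\bw=(1-t)\bu+t\bu^\star$, each entry $\phi_{ij}(t)$ of $(\bu+t\bw)(\bu+t\bw)^\T-\bu^\star{\bu^\star}^\T$ is a quadratic polynomial in $t$; for $i,j\in\operatorname{supp}(\bu^\star)$ one has $\phi_{ij}(0)=u_iu_j-u_i^\star u_j^\star\le 0$, and a short case analysis --- splitting on whether $|u_i|=|u_i^\star|$ and using $|u_i|\le|u_i^\star|$ --- shows $\phi_{ij}(t)\le 0$ throughout a right-neighborhood of $0$, so $z^\circ_{ij}=-1\in\operatorname{Sign}(\phi_{ij}(t))$ there; for $i$ or $j$ outside $\operatorname{supp}(\bu^\star)$ the relevant coordinate of $\bu+t\bw$ vanishes, so $\phi_{ij}\equiv 0$ and $z^\circ_{ij}=0\in\operatorname{Sign}(0)$. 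Taking the minimum of finitely many thresholds, $f(\bu+t\bw)=g(\bu+t\bw)$ for $t\in(0,\delta)$, and since $g$ is quadratic with $\nabla g(\bu)=\bd{0}$ the difference quotient $\big(f(\bu+t\bw)-f(\bu)\big)/(t^2/2)$ equals $\bw^\T\bZ^\circ\bw$ identically on $(0,\delta)$; testing Definition~\ref{def:second-subderivative} with the constant sequence $\bw^{\prime}\equiv\bw$ then gives $d^2 f(\bu;\bd{0})(\bw)\le\bw^\T\bZ^\circ\bw$. Combining the two bounds, $d^2 f(\bu;\bd{0})(\bw)=\bw^\T\bZ^\circ\bw=-\big(\sum_{i\in\operatorname{supp}(\bu^\star)}(u_i^\star-u_i)\big)^2=-\big(\|\bu^\star\|_1-(\operatorname{Sign}(\bu^\star))^\T\bu\big)^2=-\|\bu^\star\|_1^2$, which is strictly negative because $\bu^\star\ne\bd{0}$; undoing the reductions of the first paragraph completes the proof.

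The only genuinely nontrivial step is the tangency claim above --- that the linear piece $\bZ^\circ$ of $\tfrac12\|\cdot\|_1$ selected at $\bu\bu^\T-\bu^\star{\bu^\star}^\T$ remains active all along the ray from $\bu$ toward $\bu^\star$ --- and this is precisely where the defining constraint $|u_i|\le|u_i^\star|$ of the spurious stationary set (Theorem~\ref{thm:char-critical-r1}) enters in an essential way, collapsing the second-order behaviour of the nonsmooth $f$ in this direction onto that of the concave quadratic $g$. The remaining ingredients --- that $\bZ^\circ$ belongs to the subdifferential, the product and support-function manipulations, and the fact that $\bw$ lies in the critical cone --- are either routine bookkeeping or have already been recorded in Lemma~\ref{lma:rank-one-w-critical} and Proposition~\ref{prop:critical-cone}.
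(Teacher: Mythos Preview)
Your argument is correct and takes a genuinely different route from the paper's. The paper invokes the Rockafellar--Wets machinery for second subderivatives of max-of-smooth functions (\cite[Theorem~13.14, Example~13.16]{rockafellar2009variational}) to obtain the closed-form expression $d^2 f(\bu;\bd{0})(\bw)=\max\{\bw^{\T}\bQ\bw:\bQ\in\bbQ(\bu)\}$, and then argues via Krein--Milman and the explicit structure of $\partial f(\bu)$ from~(\ref{eq:rank-one-subdiff}) that $\bbQ(\bu)$ collapses to the singleton $\{-\operatorname{Sign}(\bu^{\star}{\bu^{\star}}^{\T})\}$. You instead work directly from Definition~\ref{def:second-subderivative}: a single quadratic minorant $g$ built from $\bZ^\circ$ gives the lower bound globally, while the key tangency observation $f(\bu+t\bw)=g(\bu+t\bw)$ for $t\in(0,\delta)$ pins down the upper bound along the constant sequence $\bw'\equiv\bw$. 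Your approach is more elementary and essentially self-contained --- it does not rely on Proposition~\ref{prop:critical-cone} or Lemma~\ref{lma:rank-one-w-critical}, nor on the singleton structure of $\bbQ(\bu)$ --- whereas the paper's route better illustrates how the general second-order calculus applies and would more readily extend to settings where a single active face cannot be exhibited by hand. As a minor simplification: your tangency step needs no case analysis at all, since $|(1-t)u_i+tu_i^\star|\le(1-t)|u_i|+t\,u_i^\star\le u_i^\star$ for every $t\in[0,1]$ gives $\phi_{ij}(t)\le0$ immediately on the full segment.
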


\begin{proof}
    Once again, we make the assumption that $u^{\star}_i\neq 0$ for every $i\in[n]$, whose reason will be explained after the proof under this assumption has been completed. By virtue of Lemma~\ref{lma:rank-one-w-critical},~\cite[Theorem~13.14]{rockafellar2009variational},~\cite[Example~8.26]{rockafellar2009variational} (or more directly~\cite[Example~13.16]{rockafellar2009variational}, but we will not use the representation of the critical cone there), and the equivalent max-of-smooth representation
    $$
        f(\bu)=\max\left\{\frac{1}{2}\langle\bu\bu^{\T}-\bu^{\star}{\bu^{\star}}^{\T},\bP\rangle:\bP\in\{\pm 1\}^{n\times n}\cap\R^{n\times n}_{\operatorname{sym}}\right\},
    $$
    all it remains is to bound
    $$
    \max\left\{\bw^{\T}\bQ\bw:\bQ\in\bbQ(\bu)\right\}
    $$
    away from zero from the above, where
    $$
        \bbQ(\bu):=\left\{\sum_{\bP\in\bbI(\bu)}y_{\bP}\cdot\bP:y_{\bP}\ge 0,\,\forall\,\bP\in\bbI(\bu),\,\sum_{\bP\in\bbI(\bu)}y_{\bP}=1,\,\left(\sum_{\bP\in\bbI(\bu)}y_{\bP}\cdot\bP\right)\bu=\bd{0}\right\},
    $$
    and
    $$
        \bbI(\bu):=\operatorname{ext}\left(\Signuuuu\cap\R^{n\times n}_{\operatorname{sym}}\right).
    $$
    Due to the Krein-Milman theorem~\cite[Theorem~3.23]{rudin1991functional}, the compactness and convexity of $\Signuuuu\cap\R^{n\times n}_{\operatorname{sym}}$, and the well-known fact that the convex hull of any compact set is compact~\cite[Corollary~2.4]{barvinok2002course}, we know
    $$
    \begin{aligned}
        \bbQ(\bu)&=\operatorname{conv}(\bbI(\bu))\cap\{\bQ\in\R^{n\times n}:\bQ\bu=\bd{0}\}
        \\&=\left(\Signuuuu\cap\R^{n\times n}_{\operatorname{sym}}\right)\cap\{\bQ\in\R^{n\times n}:\bQ\bu=\bd{0}\}.
    \end{aligned}
    $$
    However, due to the fact that $(\Signuuuu\cap\R^{n\times n}_{\operatorname{sym}})\cdot\bu=\partial f(\bu)$ and the representation of $\partial f(\bu)$ established in (\ref{eq:rank-one-subdiff}), we know the only possible $\bZ\in\Signuuuu\cap\R^{n\times n}_{\operatorname{sym}}$ with $\bu$ inside its kernel is $\bZ=-\operatorname{Sign}(\bu^{\star}{\bu^{\star}}^{\T})$. As a result, we have the following characterization
    $$
        \bbQ(\bu)=-\operatorname{Sign}(\bu^{\star}{\bu^{\star}}^{\T}).
    $$
    With this characterization at hand, we are ready to perform the final computation. We first consider the case where $\bw=\bu^{\star}-\bu$. 
    In this case, it holds that
    \begin{equation*}
    \begin{aligned}
        \max\{\bw^{\T}\bQ\bw:\bQ\in\bbQ(\bu)\}&=\langle-\operatorname{Sign}(\bu^{\star}{\bu^{\star}}^{\T}),(\bu^{\star}-\bu)\cdot(\bu^{\star}-\bu)^{\T}\rangle
        \\&=\langle-\operatorname{Sign}(\bu^{\star}{\bu^{\star}}^{\T}),\bu^{\star}{\bu^{\star}}^{\T}\rangle=-\|\bu^{\star}\|_1^2<0,
    \end{aligned}
    \end{equation*}
    as desired, where in the second equality we have used the fact that $(\operatorname{Sign}(\bu^{\star}))^{\T}\bu=0$ (cf.~Theorem~\ref{thm:char-critical-r1}) and the symmetry of $-\operatorname{Sign}(\bu^{\star}{\bu^{\star}}^{\T})$. With the above computation, the symmetric case where $\bw=-\bu^{\star}-\bu$ can be carried out almost identically, and is thus omitted. 
    
    We next revisit the assumption that $u^{\star}_i\neq 0$ for every $i\in[n]$. Indeed, since $\pm u^{\star}_j-u_j=0$ for every $j\notin\operatorname{supp}(\bu^{\star})$ (cf.~Theorem~\ref{thm:char-critical-r1}), it follows that the columns and rows of $\bbQ(\bu)$ corresponding to the zero coordinates of $\bu^{\star}$ totally have nothing to do with the computation of $\max\{\bw^{\T}\bQ\bw:\bQ\in\bbQ(\bu)\}$. As a result, we may simply ignore all of these coordinates, and then apply the above analysis on the remaining coordinates to conclude the desired result. This completes the proof.
\end{proof}

Although Theorem~\ref{thm:main-rank-one-outlier-free} only states for the specific second subderivative $d^2{f(\bu;\bd{0})}(\bw)$, actually, it is possible to translate Theorem~\ref{thm:main-rank-one-outlier-free} into statements about the other constructions of second subderivatives introduced in Section~\ref{sec:diff-theory}. This is detailed in the following corollary.

\begin{corollary}\label{cor:main-rank-one-outlier-free}
    Suppose that $\bu^\star\in\R^n\setminus\{\bd{0}\}$. Then, for any spurious stationary point $\bu$ of $f$, we have for $\bw=\pm\bu^{\star}-\bu$ that:
    \begin{itemize}
        \item 
        $d^2{f(\bu})(\bw)=-\|\bu^{\star}\|_1^2<0$.
        \item 
        $d^{2}{f(\bu)}(\bw;\bz)\approx-\|\bu^{\star}\|_1^2<0$, for some $\bz\in\R^n$.
        \item 
        $f^{\prime\prime}(\bu;\bw)=-\|\bu^{\star}\|_1^2<0$.
    \end{itemize}
\end{corollary}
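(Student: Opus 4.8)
The plan is to bootstrap from Theorem~\ref{thm:main-rank-one-outlier-free}, which already supplies $d^2{f(\bu;\bd{0})}(\bw)=-\|\bu^{\star}\|_1^2$ for $\bw=\pm\bu^{\star}-\bu$, and to trade on (i) the elementary comparisons among the second-order constructions of Section~\ref{sec:diff-theory} and (ii) the max-of-smooth representation of $f$ exploited in the proof of Theorem~\ref{thm:main-rank-one-outlier-free}. The recurring simplification is Lemma~\ref{lma:rank-one-w-critical}: since $d{f(\bu)}(\bw)=0$, the first-order correction term drops out of every second-order difference quotient built on $\bw$.

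For the one-sided second directional derivative $f''(\bu;\bw)$ I would compute directly. Taking $\bw=\bu^{\star}-\bu$, an entrywise expansion gives $(\bu+t\bw)(\bu+t\bw)^{\T}-\bu^{\star}{\bu^{\star}}^{\T}=(1-t)\bigl[(\bu\bu^{\T}-\bu^{\star}{\bu^{\star}}^{\T})-t\,\bw\bw^{\T}\bigr]$, hence
$$f(\bu+t\bw)=\tfrac{1-t}{2}\bigl\|(\bu\bu^{\T}-\bu^{\star}{\bu^{\star}}^{\T})-t\,\bw\bw^{\T}\bigr\|_1,\qquad t\in(0,1).$$
For all sufficiently small $t>0$ the sign of each nonzero entry of $\bu\bu^{\T}-\bu^{\star}{\bu^{\star}}^{\T}$ is frozen, so the right-hand side is a genuine quadratic polynomial in $t$; its constant and linear coefficients are fixed by $f(\bu)$ and by $d{f(\bu)}(\bw)=0$, and its quadratic coefficient is read off from the sign pattern exactly as in the proof of Theorem~\ref{thm:main-rank-one-outlier-free} — the relevant entries carry sign $-\operatorname{Sign}(u^{\star}_iu^{\star}_j)$ and the cross-terms cancel because $(\operatorname{Sign}(\bu^{\star}))^{\T}\bu=0$ by Theorem~\ref{thm:char-critical-r1}. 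The outcome is $f(\bu+t\bw)=f(\bu)-\tfrac12 t^2\|\bu^{\star}\|_1^2$ for small $t>0$ (in particular $f(\bu)=\tfrac12\|\bu^{\star}\|_1^2$ at every spurious stationary point), so $f''(\bu;\bw)=-\|\bu^{\star}\|_1^2$; the case $\bw=-\bu^{\star}-\bu$ is symmetric.

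For $d^2{f(\bu)}(\bw)$ (no designated $\bv$), the inequality ``$\le$'' is immediate: restricting the defining $\liminf$ to the ray $\bw'=\bw$ and using $d{f(\bu)}(\bw)=0$ gives $d^2{f(\bu)}(\bw)\le f''(\bu;\bw)=-\|\bu^{\star}\|_1^2$. For ``$\ge$'' I would return to $f(\bu)=\max\{\tfrac12\langle\bu\bu^{\T}-\bu^{\star}{\bu^{\star}}^{\T},\bP\rangle:\bP\in\{\pm1\}^{n\times n}\cap\R^{n\times n}_{\operatorname{sym}}\}$: near $\bu$ only the pieces active at $\bu$ matter, and after cancelling $d{f(\bu)}(\bw)=0$ the difference quotient at a perturbed direction $\bw'$ is a maximum, over active $\bP$, of $\tfrac{2(\langle\bP\bu,\bw'\rangle-d{f(\bu)}(\bw'))}{t}+(\bw')^{\T}\bP\bw'$. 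Evaluating at the active $\bP$ that attains $d{f(\bu)}(\bw')$ annihilates the $1/t$ term, so the quotient is bounded below by $\min\{(\bw')^{\T}\bP\bw':\bP\text{ active}\}$; the sign analysis behind Theorem~\ref{thm:char-critical-r1} (every active $\bP$ agrees with $\bP_0:=-\operatorname{Sign}(\bu^{\star}{\bu^{\star}}^{\T})$ off the coordinate block on which $\bw$ already vanishes) then yields $\bw^{\T}\bP\bw\ge\bw^{\T}\bP_0\bw=-\|\bu^{\star}\|_1^2$ for every active $\bP$, and letting $\bw'\to\bw$ gives $d^2{f(\bu)}(\bw)\ge-\|\bu^{\star}\|_1^2$.

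For the parabolic subderivative no extra machinery is needed: take $\bz=\bd{0}$ and use $d{f(\bu)}(\bw)=0$. The parabolic arcs $t\mapsto\bu+t\bw+\tfrac12 t^2\bz'$ with $\bz'\to\bd{0}$ form a subfamily of the arcs $t\mapsto\bu+t\bw'$ with $\bw'=\bw+\tfrac12 t\bz'\to\bw$, while the ray $t\mapsto\bu+t\bw$ is the special case $\bz'=\bd{0}$; this squeezes $d^2{f(\bu;\bd{0})}(\bw)\le d^2{f}(\bu)(\bw;\bd{0})\le f''(\bu;\bw)$, and since both ends equal $-\|\bu^{\star}\|_1^2$ we obtain $d^2{f}(\bu)(\bw;\bd{0})=-\|\bu^{\star}\|_1^2$, so the third bullet holds with $\bz=\bd{0}$. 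The step I expect to be the main obstacle is the ``$\ge$'' bound for $d^2{f(\bu)}(\bw)$: one must verify that wiggling the direction $\bw'$ cannot drag the difference quotient below $-\|\bu^{\star}\|_1^2$, and this is precisely where the fine sign structure of $\bu\bu^{\T}-\bu^{\star}{\bu^{\star}}^{\T}$ at a spurious stationarity and the book-keeping of the active pieces have to be invoked carefully.
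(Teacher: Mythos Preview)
Your approach is correct in spirit and genuinely different from the paper's. The paper does not compute anything: it simply invokes three off-the-shelf facts from Rockafellar--Wets and Cui--Pang to identify each of the three constructions with $d^2f(\bu;\bd{0})(\bw)$ --- Lipschitz continuity of $d f(\bu)(\cdot)$ (from strict continuity of $f$) gives $d^2f(\bu)(\bw)=d^2f(\bu;\bd{0})(\bw)$; full amenability gives parabolic regularity and hence $\inf_{\bz}d^2f(\bu)(\bw;\bz)=d^2f(\bu;\bd{0})(\bw)$; and twice directional differentiability gives $f''(\bu;\bw)=d^2f(\bu;\bd{0})(\bw)$. Your route is hands-on: the direct computation of $f''(\bu;\bw)$ via the factorization $(\bu+t\bw)(\bu+t\bw)^{\T}-\bu^{\star}{\bu^{\star}}^{\T}=(1-t)[(\bu\bu^{\T}-\bu^{\star}{\bu^{\star}}^{\T})-t\,\bw\bw^{\T}]$ is slick and yields the side fact $f(\bu)=\tfrac12\|\bu^{\star}\|_1^2$ at every spurious stationary point. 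Your squeeze argument for the parabolic subderivative is also clean and actually pins down $\bz=\bd{0}$, which is sharper than the paper's ``some $\bz$''.

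There is, however, a genuine gap in your lower bound for $d^2f(\bu)(\bw)$, exactly at the step you flag as the main obstacle. The claim ``every active $\bP$ agrees with $\bP_0:=-\operatorname{Sign}(\bu^{\star}{\bu^{\star}}^{\T})$ off the coordinate block on which $\bw$ already vanishes'' is false. Take $n=2$, $\bu^{\star}=(1,1)^{\T}$, $\bu=(-1,1)^{\T}$, $\bw=\bu^{\star}-\bu=(2,0)^{\T}$: the entry $(1,1)$ is free (since $u_1^2-{u_1^{\star}}^2=0$) yet $w_1\neq 0$, and the active $\bP$ with $p_{11}=+1$ disagrees with $\bP_0$ there. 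Your desired inequality $\bw^{\T}\bP\bw\ge\bw^{\T}\bP_0\bw$ is nonetheless true, but for a different reason: the free entries lie precisely in $\bbJ_{=,\sim}(\bu)^2\cup\bbJ_{=,\centernot{\sim}}(\bu)^2$; on the first block $\bw$ (for $\bw=\bu^{\star}-\bu$) vanishes, while on the second block $w_iw_j=4u_i^{\star}u_j^{\star}$, so the choice $p_{ij}=-\operatorname{Sign}(u_i^{\star}u_j^{\star})$ minimizes the contribution $p_{ij}w_iw_j$ (and symmetrically for $\bw=-\bu^{\star}-\bu$ with the roles of $\bbJ_{=,\sim}$ and $\bbJ_{=,\centernot{\sim}}$ swapped). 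With this correction your argument goes through; alternatively, you can bypass the whole computation by the paper's one-line observation that Lipschitz continuity of $d f(\bu)(\cdot)$ forces $d^2f(\bu)(\bw)=d^2f(\bu;\bd{0})(\bw)$.
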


\begin{proof}
We prove the corollary in a point-to-point manner:
\begin{itemize}
    \item We first deal with the first point. From~\cite[Theorem~9.7]{rockafellar2009variational} and~\cite[Exercise~9.8(c)]{rockafellar2009variational}, we know $f$ is strictly continuous everywhere. This, combined with~\cite[Exercise~9.15]{rockafellar2009variational}, further implies $d{f(\bu)}$ is Lipschitz continuous. Hence, we know from~\cite[Definition~13.3]{rockafellar2009variational} and Lemma~\ref{lma:rank-one-w-critical} that
    $$
    d^2{f(\bu)}(\bw)=d^2{f(\bu;\bd{0})}(\bw),\quad\text{for $\bw=\pm\bu^{\star}-\bu$},
    $$
    and the desired result thus follows.
    
    \item We next proceed to the second point. Because $f$ is fully amenable as mentioned at the beginning of Section~\ref{sec:main-result}, we know from~\cite[Theorem~13.67]{rockafellar2009variational} that $f$ is also parabolically regular in the sense of~\cite[Definition~13.65]{rockafellar2009variational}. This, together with Lemma~\ref{lma:rank-one-w-critical} and the discussions between~\cite[Definition~13.65]{rockafellar2009variational} and~\cite[Theorem~13.66]{rockafellar2009variational}, further implies
    $$
    \inf\{d^2{f(\bu)}(\bw;\bz):\bz\in\R^n\}=d^2{f(\bu;\bd{0})}(\bw),\quad\text{for $\bw=\pm\bu^{\star}-\bu$},
    $$
    and the second result thus holds true, as desired. (We remark that as it is not known if the infimum above can indeed be attained, we have to use an approximation in the statement.)
    
    \item We finally work towards the last one. By~\cite[Proposition~8]{cui2020study}, we know $f$ is twice directionally differentiable, and $f^{\prime\prime}(\bu;\bw)$ thus exists for every $\bw\in\R^n$. This, together with Lemma~\ref{lma:rank-one-w-critical}
    and the definition of (one-sided) second directional derivative in~\cite[Equation~13.3]{rockafellar2009variational}, further implies
    $$
    f^{\prime\prime}(\bu;\bw)=d^2{f(\bu;\bd{0})}(\bw),\quad\text{for $\bw=\pm\bu^{\star}-\bu$},
    $$
    and hence the stated result, as desired.
\end{itemize}    
Summarizing the above three pieces completes the whole proof.
\end{proof}

With Corollary~\ref{cor:main-rank-one-outlier-free} at hand, we next provide some further explanations on the main finding.
\begin{remark}
    Although $f$ may not be twice semidifferentiable as per~\cite[Definition~13.6(a)]{rockafellar2009variational}, and we therefore usually do not have a second-order expansion of $f$ as in the smooth case (i.e., the Taylor expansion) as evidenced by~\cite[Exercise~13.7]{rockafellar2009variational}, we can still combine the last point in Corollary~\ref{cor:main-rank-one-outlier-free} with~\cite[Equation~13.3]{rockafellar2009variational} and Lemma~\ref{lma:rank-one-w-critical} to see that, at any spurious stationary point $\bu$ of $f$, there exists some universal constants $\beta,\delta>0$, such that
    $$
        f(\bu+t\bw)\le f(\bu)-\beta\cdot\frac{t^2}{2},\quad\text{for every $t\in(0,\delta)$}.
    $$
    In plain words, locally along the direction of $\bw$, the value of $f$ is upper dominated by a univariate strongly concave quadratic function with global maximum $f(\bu)$ achieved at $t=0$. This gives rise the possibility to further decrease the function value of $f$ at $\bu$ along $\bw$ at least as fast as decreasing some strongly concave quadratic function.
\end{remark}

By combining Theorem~\ref{thm:main-rank-one-outlier-free} with the second-order optimality condition in~\cite[Theorem~13.24(a)]{rockafellar2009variational}, 
we are finally led to the main conclusion of this section. (We remark that in the following corollary we have implicitly subsumed the case where $\bu^{\star}=\bd{0}$, but it should be rather clear that the statement is still valid for this trivial case.)
\begin{corollary}\label{cor:main}
    All second-order stationary points 
    of $f$ are globally optimal.
\end{corollary}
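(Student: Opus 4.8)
The plan is to read the corollary directly off Theorems~\ref{thm:char-critical-r1} and~\ref{thm:main-rank-one-outlier-free}, which between them leave no room for a spurious second-order stationary point. First I would handle the degenerate case $\bu^{\star}=\bd{0}$ separately: here $f(\bu)=\frac{1}{2}\|\bu\bu^{\T}\|_1=\frac{1}{2}\|\bu\|_1^2$, which is convex (being the composition of the convex nonnegative map $\bu\mapsto\|\bu\|_1$ with the map $t\mapsto t^2/2$, which is convex and nondecreasing on $\R_+$), so by convexity every stationary point, hence in particular every second-order stationary point, is a global minimizer, and the statement is trivially true. From now on assume $\bu^{\star}\in\R^n\setminus\{\bd{0}\}$.

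Now let $\bu$ be any second-order stationary point of $f$. By definition $\bu$ is in particular stationary, i.e.\ $\bd{0}\in\partial f(\bu)$, so Theorem~\ref{thm:char-critical-r1} applies and yields exactly two possibilities: either $\bu\in\{\pm\bu^{\star}\}$, or $\bu$ is spurious in the sense that $|u_i|\le|u^{\star}_i|$ for every $i\in[n]$ and $(\operatorname{Sign}(\bu^{\star}))^{\T}\bu=0$. In the first case, since $f\ge 0$ everywhere and $f(\pm\bu^{\star})=0$, the point $\bu$ is globally optimal and there is nothing more to do. It therefore remains to exclude the second case. If $\bu$ were a spurious stationary point, then Theorem~\ref{thm:main-rank-one-outlier-free} exhibits an explicit direction, $\bw=\bu^{\star}-\bu$ (either sign works), along which $d^2{f(\bu;\bd{0})}(\bw)=-\|\bu^{\star}\|_1^2<0$; but being a second-order stationary point demands $d^2{f(\bu;\bd{0})}(\bw)\ge 0$ for \emph{every} $\bw\in\R^n$ — equivalently, the second-order necessary optimality condition of~\cite[Theorem~13.24(a)]{rockafellar2009variational} would be violated — a contradiction. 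Hence no spurious stationary point is second-order stationary, so every second-order stationary point of $f$ equals $\pm\bu^{\star}$ and is globally optimal.

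At the level of this corollary there is essentially no obstacle: once Theorems~\ref{thm:char-critical-r1} and~\ref{thm:main-rank-one-outlier-free} are granted, it is a two-line argument. All the real difficulty has already been absorbed upstream — in the seven-case sign-pattern analysis underlying the stationary-point characterization, and in the computation that collapses $\bbQ(\bu)$ to the single matrix $-\operatorname{Sign}(\bu^{\star}{\bu^{\star}}^{\T})$ and evaluates the resulting quadratic form. The only points one must be careful not to skip are the trivial case $\bu^{\star}=\bd{0}$ and the elementary observation that $\pm\bu^{\star}$ are global minimizers because $f$ is nonnegative and vanishes there.
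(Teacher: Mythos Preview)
Your proposal is correct and follows essentially the same approach as the paper: the paper dispatches the corollary in one sentence by combining Theorem~\ref{thm:main-rank-one-outlier-free} with the second-order necessary condition~\cite[Theorem~13.24(a)]{rockafellar2009variational}, remarking parenthetically that the case $\bu^{\star}=\bd{0}$ is trivially subsumed. Your write-up simply unpacks this by making the appeal to Theorem~\ref{thm:char-critical-r1} explicit and handling the degenerate case via convexity rather than by inspection, which is fine.
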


As a direct implication of Corollary~\ref{cor:main}, any algorithm for optimizing nonsmooth functions with convergence to second-order stationarity 
will avoid all spurious stationary points of $f$ and converge to its global optimality. Unfortunately, we are not aware of any algorithm with such a guarantee deterministically, even for fully amenable functions. (We remark that for smooth functions the existence of such algorithms is well known, e.g., the cubic Newton method~\cite{nesterov2006cubic}.) Nevertheless, for weakly-convex functions, there do exist some well-known results playing a similar role as~\cite{lee2016gradient}, which shows that for smooth functions the gradient descent method almost surely converges to local minimizers. We will revisit these results with negative observations in the forthcoming section. As another side remark, from a high-level algorithmic perspective, Corollary~\ref{cor:main} also implies that the study of any algorithm for the $\ell_1$-norm rank-one symmetric matrix factorization problem without convergence to its global optimality is actually meaningless, since it has been recently understood that for all this type of problems,
computing a stationary point is trivial~\cite[Remark~3.5]{guan2024subdiff}.

\clearpage

\section{Revisiting prior arts on nonsmooth global optimization}\label{sec:revisit}
With Theorem~\ref{thm:main-rank-one-outlier-free}, Corollary~\ref{cor:main-rank-one-outlier-free}, and Corollary~\ref{cor:main} established, as a very natural advance, one may be tempted to apply existing results on the generic minimizing behavior of simple algorithms (e.g., the subgradient method) such as~\cite{davis2021active,davis2022proximal,bianchi2024stochastic} that play a similar role as~\cite{lee2016gradient} under a nonsmooth setting to our problem. 
Unfortunately, we will show in the subsequent subsections that this may not be as successful as expected.

\subsection{The absence of active manifolds}
All of the aforementioned convergence results require at least the existence of a $C^2$-active manifold at each of the spurious stationary points of $f$;
see, e.g.,~\cite[Definitions~2.6-2.7]{davis2022proximal}. However, the following result shows that such an object can be absent even for a very simple instance of $f$.

\begin{proposition}\label{prop:absense-active-manifold}
    Suppose that $\bu^{\star}=(1,1)^{\T}$. Then, there does not exist a $C^p$-active manifold around $\bu_0:=(-1,1)^{\T}$, which is a spurious stationary point of $f$, of any order $p\ge 1$.
\end{proposition}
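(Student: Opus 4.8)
The plan is to argue by contradiction, playing the sharpness requirement in the definition of an active manifold against the explicit description of the stationary set from Theorem~\ref{thm:char-critical-r1}. \textbf{Step 1 (local picture at $\bu_0$).} With $\bu^{\star}=(1,1)^{\T}$, Theorem~\ref{thm:char-critical-r1} says the stationary set of $f$ is $\{\bu\in\R^2:|u_1|\le 1,\,|u_2|\le 1,\,u_1+u_2=0\}\cup\{\pm(1,1)^{\T}\}$; since $\pm(1,1)^{\T}$ lie at distance $2$ from $\bu_0=(-1,1)^{\T}$, in a small ball $U$ around $\bu_0$ the stationary set is exactly the half-open segment $S:=\{\bu_0+t(1,-1)^{\T}:0\le t<\varepsilon\}$, a ray issuing from $\bu_0$. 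I would also record, directly from $\partial f(\bu)=(\operatorname{Sign}(\bu\bu^{\T}-\bu^{\star}{\bu^{\star}}^{\T})\cap\R^{2\times 2}_{\operatorname{sym}})\bu$, that $\partial f(\bu_0)=[-2,0]\times[0,2]$; in particular $\partial f(\bu_0)$ is not a singleton, so $f$ is not $C^1$ on any neighborhood of $\bu_0$.

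\textbf{Step 2 (constraining $\mathcal M$).} Suppose $\mathcal M$ is a $C^p$-active manifold of $f$ around $\bu_0$ for some $p\ge 1$ (it suffices to treat $p=1$), with associated neighborhood $U$. The sharpness axiom (see, e.g., \cite[Definitions~2.6--2.7]{davis2022proximal}) gives $\mu>0$ with $\operatorname{dist}(\bd{0},\partial f(\bu))\ge\mu$ for all $\bu\in U\setminus\mathcal M$; hence every stationary point of $f$ in $U$ must lie in $\mathcal M$, so (shrinking $\varepsilon$) $S\subseteq\mathcal M$ and $\dim\mathcal M\ge 1$. If $\dim\mathcal M=2$ then $\mathcal M$ contains a neighborhood of $\bu_0$ and $f=f|_{\mathcal M}$ would be $C^1$ there, contradicting Step~1; hence $\dim\mathcal M=1$, $\mathcal M$ is a $C^1$ curve through $\bu_0$, and since it contains the segment $S$ its tangent line there is $\operatorname{span}\{(1,-1)^{\T}\}$. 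Passing to coordinates $\bu=\bu_0+a(1,-1)^{\T}+b(1,1)^{\T}$, the curve $\mathcal M$ is locally the graph $b=g(a)$ of a $C^1$ function with $g(0)=g'(0)=0$, and $S\subseteq\mathcal M$ forces $g\equiv 0$ on some interval $[0,\delta)$.

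\textbf{Step 3 (the contradiction).} For $k$ large set $\bu_k:=\bu_0+\tfrac2k(1,-1)^{\T}+\tfrac1k(1,1)^{\T}$, i.e.\ $(a,b)=(2/k,1/k)$. Then $\bu_k\to\bu_0$ and $\bu_k\notin\mathcal M$, since its $b$-coordinate is $1/k\ne 0=g(2/k)$. In these coordinates $u_1=-1+a+b$, $u_2=1-a+b$, so for large $k$ all of $u_1^2-1=(a+b)(a+b-2)$, $u_2^2-1=(b-a)(b-a+2)$ and $u_1u_2-1=b^2-(a-1)^2-1$ are strictly negative; thus $\operatorname{Sign}(\bu_k\bu_k^{\T}-\bu^{\star}{\bu^{\star}}^{\T})\cap\R^{2\times 2}_{\operatorname{sym}}=\{-\mathbf 1\mathbf 1^{\T}\}$ with $\mathbf 1:=(1,1)^{\T}$, whence $\partial f(\bu_k)=\{-(\mathbf 1^{\T}\bu_k)\mathbf 1\}=\{(-2/k,-2/k)^{\T}\}$ and $\operatorname{dist}(\bd{0},\partial f(\bu_k))=2\sqrt2/k\to 0$. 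This contradicts $\operatorname{dist}(\bd{0},\partial f(\bu_k))\ge\mu$, so no $C^p$-active manifold around $\bu_0$ can exist. (If one works instead with the growth form of sharpness, the same $\bu_k$ satisfy $f(\bu_k)=2-2/k^2<2=f|_S$, hence $f(\bu_k)-f(\operatorname{proj}_{\mathcal M}(\bu_k))<0$, again contradicting sharpness.)

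\textbf{Main obstacle.} The computations are all elementary; the only delicate point is Step~2 — forcing $\mathcal M$ to simultaneously contain the entire ray $S$ of nearby stationary points and to be a genuine boundaryless $C^1$ submanifold, which pins it down to a graph over $\operatorname{span}\{(1,-1)^{\T}\}$ that is flat along $S$. That is precisely what lets the points $\bu_k$ slip sideways off $\mathcal M$ into the full-dimensional region where the sign pattern of $\bu\bu^{\T}-\bu^{\star}{\bu^{\star}}^{\T}$ is constant and $f$ is smooth with gradient $-(u_1+u_2)(1,1)^{\T}$ vanishing only on the line $u_1+u_2=0$; in short, the ``active'' combinatorial pattern at $\bu_0$ is not confined to a lower-dimensional set, so sharpness must fail for every candidate $\mathcal M$. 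As an aside, a quicker but less self-contained route is to invoke the fact that for partly smooth functions the normal space to the active manifold equals the subspace parallel to $\partial f(\bu_0)$, which here is all of $\R^2$: this forces $\dim\mathcal M=0$ and immediately contradicts $S\subseteq\mathcal M$.
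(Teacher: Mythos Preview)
Your proof is correct but follows a genuinely different strategy from the paper's. Both arguments begin the same way: sharpness forces any candidate manifold $\mathcal M$ to contain the ray $S$ of nearby stationary points, so $\dim\mathcal M\ge 1$. From there the two diverge. The paper attacks the \emph{smoothness} axiom uniformly in the dimension: it extends the segment $S$ to a $C^1$ curve $c$ inside $\mathcal M$ past the endpoint $\bu_0$, writes $f$ piecewise along that curve, and computes that $(f\circ c)'$ has a jump at $\bu_0$ (from $0$ on the $S$-side to $4\delta$ on the extended side), so $f|_{\mathcal M}$ cannot be $C^1$. You instead split on $\dim\mathcal M$: the two-dimensional case is dispatched by smoothness (since $\partial f(\bu_0)=[-2,0]\times[0,2]$ is not a singleton), and for the one-dimensional case you attack \emph{sharpness} directly, producing explicit points $\bu_k\to\bu_0$ off the graph of $g$ at which the sign pattern of $\bu_k\bu_k^{\T}-\bu^\star{\bu^\star}^{\T}$ is constant and $\partial f(\bu_k)=\{(-2/k,-2/k)^{\T}\}$, so $\operatorname{dist}(\bd 0,\partial f(\bu_k))\to 0$.

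Your route is arguably more elementary: it avoids the local-diffeomorphism straightening and the curve-extension step, and the sharpness violation is a two-line computation once $\mathcal M$ has been pinned down as a graph flat along $S$. The paper's route is more uniform (no case split on dimension) and makes the failure of $C^1$-smoothness of $f|_{\mathcal M}$ completely explicit. One small caveat: your closing aside about the normal space equalling the affine span of $\partial f(\bu_0)$ is a feature of Lewis-style partial smoothness and is not part of the active-manifold definition cited here, so that shortcut is not quite self-contained in this setting; your main argument does not rely on it, however.
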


\begin{proof}
    For any set $\bbM\subseteq\R^2$ containing $\bu_0$ and $\varepsilon>0$ such that $\bbM\cap(\bu_0+\varepsilon\operatorname{int}(\bbB))$ is a $C^p$-smooth manifold for some $p\ge 1$ (as defined in~\cite[Definition~2.2]{davis2022proximal}) with the sharpness condition in~\cite[Definition~2.6]{davis2022proximal} fulfilled, we must have
    \begin{equation}\label{eq:active-manifold-temp1}
    (\bu_0+\varepsilon\operatorname{int}(\bbB))\cap\{(x,y)^{\T}:x+y=0\}\cap\{(x,y)^{\T}:|x|,|y|\le 1\}\subseteq\bbM\cap(\bu_0+\varepsilon\operatorname{int}(\bbB)),
    \end{equation}
    as otherwise there will be some point $\bu\in(\bu_0+\varepsilon\operatorname{int}(\bbB))\setminus\bbM$ with $\bd{0}\in\partial f(\bu)$ due to Theorem~\ref{thm:char-critical-r1}, which is a contradiction to the sharpness condition. We remark that because of (\ref{eq:active-manifold-temp1}), it should be rather clear that $\operatorname{dim}(\bbM\cap(\bu_0+\varepsilon\operatorname{int}(\bbB)))>0$.
    As $\bbM\cap(\bu_0+\varepsilon\operatorname{int}(\bbB))$ is more precisely a $C^p$-smooth submanifold embedded in $\R^2$ by~\cite[Definition~2.2]{davis2022proximal}, we know from\footnote{Although this theorem only states for $C^{\infty}$-smooth manifolds, it can be easily adapted to the $C^p$-smooth case.}~\cite[Theorem~3.12]{boumal2023introduction} that there exists another $\varepsilon^{\prime}>0$, an open subset $\bbV\subseteq\R^2$, and a $C^p$-diffeomorphism $\F:(\bu_0+\varepsilon^{\prime}\operatorname{int}(\bbB))\rightarrow\bbV$ such that
    \begin{equation}\label{eq:active-manifold-temp2}
        \F(\bbM\cap(\bu_0+\min\{\varepsilon,\varepsilon^{\prime}\}\operatorname{int}(\bbB)))=\bbE\cap\bbV=:\bbV^{\prime},
    \end{equation}
    where
    $$
        \bbE:=\R^{\operatorname{dim}(\bbM\cap(\bu_0+\varepsilon\operatorname{int}(\bbB)))}\times\{0\}^{2-\operatorname{dim}(\bbM\cap(\bu_0+\varepsilon\operatorname{int}(\bbB)))}
    $$
    is a subspace of $\R^2$. By (\ref{eq:active-manifold-temp1}), we know there must exist some $\delta>0$ such that the line segment
    $$
        \{(x,y)^{\T}:x+y=0\}\cap\{(x,y)^{\T}:1-\delta\le |x|,|y|\le 1\}\subseteq\bbM\cap(\bu_0+\min\{\varepsilon,\varepsilon^{\prime}\}\operatorname{int}(\bbB)).
    $$
    Next, we simply parameterize such a line segment through a curve
    $$
    \{c(t):t\in[0,1]\}\subseteq\bbM\cap(\bu_0+\min\{\varepsilon,\varepsilon^{\prime}\}\operatorname{int}(\bbB)),
    $$
    where $c:[0,1]\rightarrow\R^2$ with
    $$
        c(t):=\left(-(1-\delta+\delta t),1-\delta+\delta t\right)^{\T},
    $$
    and is thus $C^{\infty}$-smooth on $(0,1)$. By (\ref{eq:active-manifold-temp2}), we further know
    $$
    \{\F(c(t)):t\in[0,1]\}\subseteq\bbV^{\prime}.
    $$
    As a result, by the openness of $\bbV^{\prime}$ in the subspace topology induced by $\bbE$, we can further extend $\{\F(c(t)):t\in[0,1]\}$ towards both sides slightly while keeping the resulting curve remain in $\bbV^{\prime}$ and $C^1$-smooth. (This can be done by, e.g., slightly extending the curve at the two endpoints along the tangent directions.) We may override the above notation and simply use $\{\F(c(t)):t\in(-\rho,1+\rho)\}\subseteq\bbV^{\prime}$ to denote the extended $C^1$-smooth curve (with endpoints removed for simplicity), where $\rho>0$ is some small constant. By pulling such a curve back using $\F^{-1}$, which is $C^p$-smooth, and recall $p\ge 1$ and (\ref{eq:active-manifold-temp2}), we obtain another $C^1$-smooth curve (where we have abused our notation again)
    $$
    \{c(t):t\in(-\rho,1+\rho)\}\subseteq\bbM\cap(\bu_0+\min\{\varepsilon,\varepsilon^{\prime}\}\operatorname{int}(\bbB)),
    $$
    which is an extension of the original one. We next study the function behavior of $f$ when restricted to this curve at $t=1\in(-\rho,1+\rho)$. Actually, because the extended curve is $C^1$-smooth and thus $c^{\prime}(1)=(-\delta,\delta)^{\T}$, it is easy to see that there further exists some small constant $\kappa>0$ such that
    $$
        f(\bu)=\begin{dcases}
            \frac{1}{2}\left(4-u_1^2-u_2^2-2u_1 u_2\right), & \bu\in\{c(t):1-\kappa<t<1\}, \\
            \frac{1}{2}\left(u_1^2+u_2^2-2u_1 u_2\right), & \bu\in\{c(t):1<t<1+\kappa\},
        \end{dcases}
    $$
    each piece of which is clearly differentiable. This representation, together with the chain rule for differentiable functions and the continuous differentiability of $c$, further implies for $g(t):=f(c(t))$ that
    $$
        \lim_{t\nearrow 1} g^{\prime}(t)=0< 4\delta=\lim_{t\searrow 1}g^{\prime}(t),
    $$
    and thus $g$ can not even be continuously differentiable, i.e., $C^1$-smooth, showing the smoothness condition in~\cite[Definition~2.6]{davis2022proximal} can never be met even for $p=1$. This completes the proof.
\end{proof}

As a consequence, the aforementioned convergence results are all not applicable even to such a simple instance of $f$ that is well-structured and enjoys many benign properties.

\subsection{The potential risk of tilting a function}
As a remedy, one may also be tempted to, as a preprocessing procedure, linearly tilt the previous function slightly beforehand (i.e., superpose a small linear perturbation on the function), which can generically endow the function with the necessary properties~\cite[Theorem~2.9]{davis2022proximal}, and then apply the aforementioned results to the tilted function. However, in what follows, we will showcase the potential risk regarding such a procedure by exhibiting three examples. For a better and intuitive understanding of the insights delivered by these examples, we will accompany each of them a visualization after their discussions. The first example confirms the possibility of drifting the global minimizer of a function arbitrarily far away by tilting it, even if the perturbation is tiny.

\begin{example}\label{ex:tilt-1}
Consider the following function from $\R$ to $\R$, together with its gradient
    $$
        g(x):=\begin{dcases}
            0.5, & x\le-2, \\ 
            -\frac{(x+2)^{2}}{2}+0.5, & -2< x\le-1, \\
            \frac{x^{2}}{2}-0.5, & -1< x\le1, \\
            -\frac{(x-2)^{2}}{2}+0.5, & 1< x\le2, \\
            0.5, & x>2,
        \end{dcases}\qquad 
        g^{\prime}(x)=\begin{dcases}
            0, & x\le-2, \\ 
            -x-2, & -2< x\le-1, \\
            x, & -1< x\le1, \\
            -x+2, & 1< x\le2, \\
            0, & x>2.
        \end{dcases}
    $$
    It is clear that $\arg\min \{g(x):x\in\R\}=\{0\}$ and $g^{\prime}$ is Lipschitz continuous with modulus $1$. Thus, by~\cite[Proposition~4.12]{vial1983strong}, we know $g$ is $1$-weakly-convex. However, it is also easy to see that, for any $a\in\R\setminus\{0\}$, whose complement has a zero Lebesgue measure, we have
    $$
        \arg\min \{g(x)-a\cdot x:x\in\R\}=\{\operatorname{Sign}(a)\cdot\infty\},
    $$
    which arbitrarily deviates from the original minimizer.
\end{example}


\begin{figure}[!ht]
\centering
\subfloat[The function plot of $g(x)$.] {
\centering
        \resizebox{.45\linewidth}{!}{%
        \begin{tikzpicture}[
  declare function={
    func(\x)= (\x<=-2) * (0.5)   +
     and(\x>-2, \x<=-1) * (-0.5*(\x+2)*(\x+2)+0.5)     +
     and(\x>-1, \x<=1) * (0.5*\x*\x-0.5)     +
     and(\x>1,  \x<=2) * (-0.5*(\x-2)*(\x-2)+0.5) +
                (\x>2) * (0.5);
  }
]
\begin{axis}[
    axis lines = left,
    grid=major,
    xlabel = \(x\),
    ylabel = {\(g(x)\)},
height=0.5*\axisdefaultheight,
width=\axisdefaultheight,
]
\addplot [
    domain=-4:4, 
    samples=500,
    color=black,
]
{func(x)};

\end{axis}
\end{tikzpicture}}
}
\subfloat[The function plot of $g(x)+0.01\cdot x$.]{
\centering
        \resizebox{.45\linewidth}{!}{%
        \begin{tikzpicture}[
  declare function={
    func(\x)= (\x<=-2) * (0.5+0.01*\x)   +
     and(\x>-2, \x<=-1) * (-0.5*(\x+2)*(\x+2)+0.5+0.01*\x)     +
     and(\x>-1, \x<=1) * (0.5*\x*\x-0.5+0.01*\x)     +
     and(\x>1,  \x<=2) * (-0.5*(\x-2)*(\x-2)+0.5+0.01*\x) +
                (\x>2) * (0.5+0.01*\x);
  }
]
\begin{axis}[
    axis lines = left,
    grid=major,
    xlabel = \(x\),
    ylabel = {\(g(x)+0.01\cdot x\)},
height=0.5*\axisdefaultheight,
width=\axisdefaultheight,
]
\addplot [
    domain=-100:100, 
    samples=500,
    color=black,
]
{func(x)};

\end{axis}
\end{tikzpicture}}
}
\caption{The functions plots of $g(x)$ and $g(x)+0.01\cdot x$ for $g$ defined in Example~\ref{ex:tilt-1}.}
\label{fig:fcnplot-1}
\end{figure}
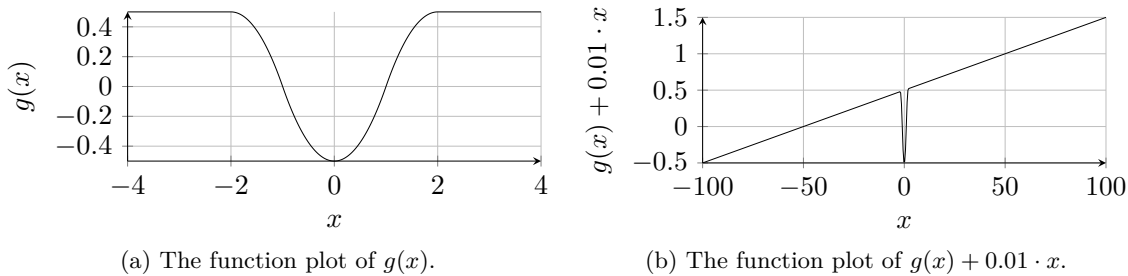

The next example witnesses the fact that introducing a countable number of additional local minima is also possible, even under a tiny perturbation.

\begin{example}\label{ex:tilt-2}
Consider the following function from $\R$ to $\R$ with its Fr\'echet subdifferential\footnote{We still use the notation $\partial g$ here to denote the Fr\'echet subdifferential as $g$ will soon turn out to be weakly-convex, and thus subdifferentially regular due to~\cite[Proposition~4.3]{vial1983strong},~\cite[Example~7.27]{rockafellar2009variational}, and~\cite[Exercise~8.20(b)]{rockafellar2009variational}.
}
    $$
        g(x):=\begin{dcases}
            -\frac{(x+\lfloor -x\rfloor+1)^{2}}{2}+\frac{1}{2}+\frac{\lfloor -x\rfloor}{2}, & x\le 0, \\
            -\frac{(x-\lfloor x\rfloor-1)^{2}}{2}+\frac{1}{2}+\frac{\lfloor x\rfloor}{2}, & x> 0,
        \end{dcases}\quad \partial g(x)=
        \begin{dcases}
            \{-(x-\lfloor x\rfloor)\}, & x<0,\,x\notin\bbZ, \\
            [-1,0], & x< 0,\,x\in\bbZ, \\
            [-1,1], & x= 0, \\
            \{-(x-\lfloor x\rfloor)+1\}, & x> 0,\,x\notin\bbZ, \\
            [0,1], & x> 0,\,x\in\bbZ,
        \end{dcases}
    $$
    the latter of which can be easily computed by combining the directional differentiability of $g$ (which is obvious from the definition) and~\cite[Exercise~8.4]{rockafellar2009variational}. It can be directly verified that
    $$
    (v-w)\cdot(x-y)\ge -(x-y)^2,\quad\text{for every $x,y\in\R$ and $(v,w)\in\partial g(x)\times \partial g(y)$},
    $$
    and thus $g$ is $1$-weakly-convex by~\cite[Lemma~2.1(4)]{davis2019stochastic}. Besides, it is also obvious that there is only one local (and thus global) minimizer of $g$, namely $x=0$. However, it actually holds, for $h_a(x):=g(x)-a\cdot x$ and any $a\in(-1,1)\setminus\{0\}$, which has a Lebesgue measure of $2$, that
    $$
        \partial h_a(x)=\partial g(x)-a=\operatorname{conv}(\{-a,\operatorname{Sign}(a)-a\}),
        \quad\text{for every $x$ with $\operatorname{Sign}(a)\cdot x >0$ and $x\in\bbZ$},
    $$
    where in the first equality we have used~\cite[Exercise~8.8(c)]{rockafellar2009variational}. Therefore, it holds for every $x$ with $\operatorname{Sign}(a)\cdot x >0$ and $x\in\bbZ$ that
    $$
        d{h_a(x)}(d)=\max\{-a\cdot d,(\operatorname{Sign}(a)-a)\cdot d\}\ge\min\{|a|,|\operatorname{Sign}(a)-a|\}\cdot |d|,\quad\text{for every $d\in\R$},
    $$
    where we have used~\cite[Theorem~8.30]{rockafellar2009variational} and the subdifferential regularity of $g$ as mentioned earlier. This, together with the positivity of $\min\{|a|,|\operatorname{Sign}(a)-a|\}$ and~\cite[Lemma~3.24]{bonnans2013perturbation}, further implies every $x$ with $\operatorname{Sign}(a)\cdot x >0$ and $x\in\bbZ$ is a local minimizer of $h_a$, all of which together has a cardinality of $\aleph_0$. We would like to remark that, since $\{x:0\in g(x)\}=\bbZ$, for any $a\in(-1,1)\setminus\{0\}$, any algorithm converging to any local minimizer of the perturbed function $h_a$ other than the origin is essentially stuck at a spurious stationary point of $g$.
\end{example}

Although in the above example the undesired perturbations only occupy a small region namely $(-1,1)\setminus\{0\}$,
we believe it still suffices to explain the potential risk of tilting, since in most scenarios introducing a large modification to the interested function is unfavorable. 

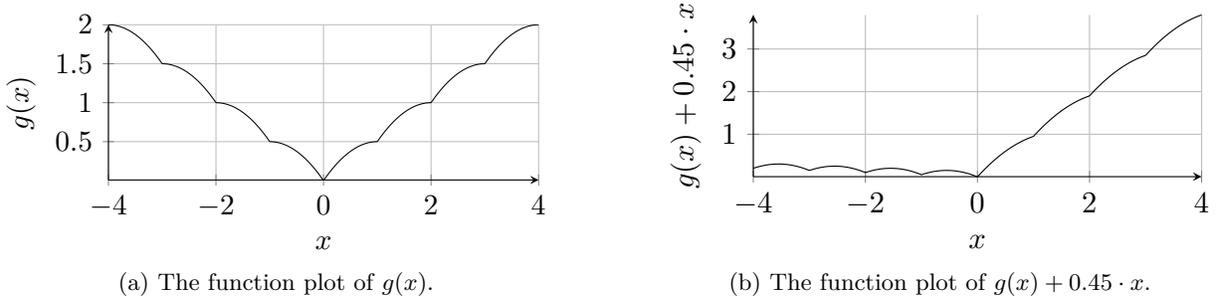
\begin{figure}[!ht]
\centering
\subfloat[The function plot of $g(x)$.] {
\centering
        \resizebox{.45\linewidth}{!}{%
        \begin{tikzpicture}[
  declare function={
    func(\x)= (\x<=0) * (-0.5*(\x+floor(-\x)+1)*(\x+floor(-\x)+1)+0.5+0.5*floor(-\x))   +
     (\x>0) * (-0.5*(\x-floor(\x)-1)*(\x-floor(\x)-1)+0.5+0.5*floor(\x));
  }
]
\begin{axis}[
    axis lines = left,
    grid=major,
    xlabel = \(x\),
    ylabel = {\(g(x)\)},
height=0.5*\axisdefaultheight,
width=\axisdefaultheight,
]
\addplot [
    domain=-4:4, 
    samples=500,
    color=black,
]
{func(x)};

\end{axis}
\end{tikzpicture}}
}
\hfill
\subfloat[The function plot of $g(x)+0.45\cdot x$.]{
\centering
        \resizebox{.45\linewidth}{!}{%
        \begin{tikzpicture}[
  declare function={
    func(\x)= (\x<=0) * (-0.5*(\x+floor(-\x)+1)*(\x+floor(-\x)+1)+0.5+0.5*floor(-\x)+0.45*\x)   +
     (\x>0) * (-0.5*(\x-floor(\x)-1)*(\x-floor(\x)-1)+0.5+0.5*floor(\x)+0.45*\x);
  }
]
\begin{axis}[
    axis lines = left,
    grid=major,
    xlabel = \(x\),
    ylabel = {\(g(x)+0.45\cdot x\)},
height=0.5*\axisdefaultheight,
width=\axisdefaultheight,
]
\addplot [
    domain=-4:4, 
    samples=500,
    color=black,
]
{func(x)};

\end{axis}
\end{tikzpicture}}
}
\caption{The functions plots of $g(x)$ and $g(x)+0.45\cdot x$ for $g$ defined in Example~\ref{ex:tilt-2}.}
\label{fig:fcnplot-2}
\end{figure}

The final example revisits the function designed and investigated in Proposition~\ref{prop:absense-active-manifold} and demonstrates that the undesirable phenomenon of transforming a spurious stationary point of the original function to a local minimizer of the tilted function can also take place in $\ell_1$-norm symmetric matrix factorization. Because its reasoning is similar to that of Example~\ref{ex:tilt-2}, the references corresponding to the underpinning variational analysis theories will be hidden in what follows.

\begin{example}\label{ex:tilt-3}
    Suppose that $\bu^{\star}=(1,1)^{\T}$ and let $\bu_0:=(-1,1)^{\T}$, which is a spurious stationary point of $f$ (cf.~Theorem~\ref{thm:char-critical-r1}). Through the very definition, it can be easily computed that
    $$
        \partial f(\bu_0)=\left(\begin{pmatrix}
            \operatorname{Sign}(0) & \operatorname{Sign}(-2) \\
            \operatorname{Sign}(-2) & \operatorname{Sign}(0)
        \end{pmatrix}\cap\R^{2\times 2}_{\operatorname{sym}}\right)\cdot \begin{pmatrix}
            -1 \\ 1
        \end{pmatrix}=\begin{pmatrix}
            [-1,1] & -1 \\
            -1 & [-1,1]
        \end{pmatrix}\cdot \begin{pmatrix}
            -1 \\ 1
        \end{pmatrix}=\begin{pmatrix}
            [-2,0] \\ 
            [0, 2]
        \end{pmatrix}.
    $$
    However, it actually holds, for $h_{\ba}(\bu):=f(\bu)-\ba^{\T}\bu$ and any $\ba\in(-2,0)\times(0,2)$, which has a Lebesgue measure of $4$, that
    $$
        \partial h_{\ba}(\bu_0)=\partial f(\bu_0)-\ba=[-2-a_1,-a_1]\times[-a_2,2-a_2].
    $$
    As a result, we have
    $$
    \begin{aligned}
        d{h_{\ba}(\bu_0)}(\bdd)&=\max\{(-2-a_1)\cdot d_1,-a_1\cdot d_1\}+\max\{-a_2\cdot d_2,(2-a_2)\cdot d_2\}
        \\&\ge\min\{|2+a_1|,|a_1|,|a_2|,|2-a_2|\}\cdot\|\bdd\|_1,\quad\text{for every $\bdd\in\R^2$},
    \end{aligned}
    $$
    which, together with the positivity of $\min\{|2+a_1|,|a_1|,|a_2|,|2-a_2|\}$, further implies $\bu_0$ is a local minimizer of $h_{\ba}$. Thus, similar to Example~\ref{ex:tilt-2}, any algorithm running on $h_{\ba}$ for $\ba\in(-2,0)\times(0,2)$ with convergence to its local minimizer $\bu_0$ is actually trapped by this spurious stationary point. By applying a completely symmetric argument, one can also easily discover the same dilemma at $-\bu_0$, which is a spurious stationary point of $f$ as well.
\end{example}


\begin{figure}[!ht]
\centering
\subfloat[The function plot of $-f(\bu)$.]{\includegraphics[width=0.45\linewidth]{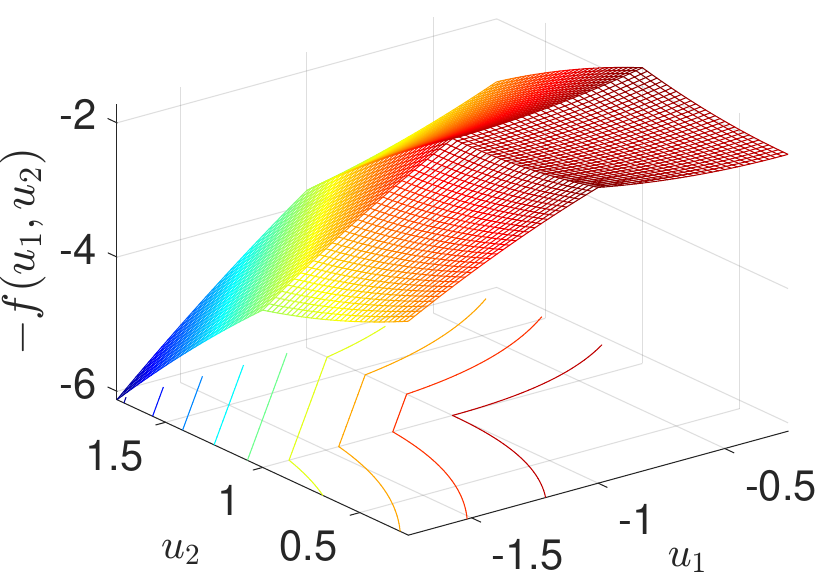}}
\hspace{0.05\linewidth}
\subfloat[The function plot of $-f(\bu)+\bu_0^{\T}\bu$.]{\includegraphics[width=0.45\linewidth]{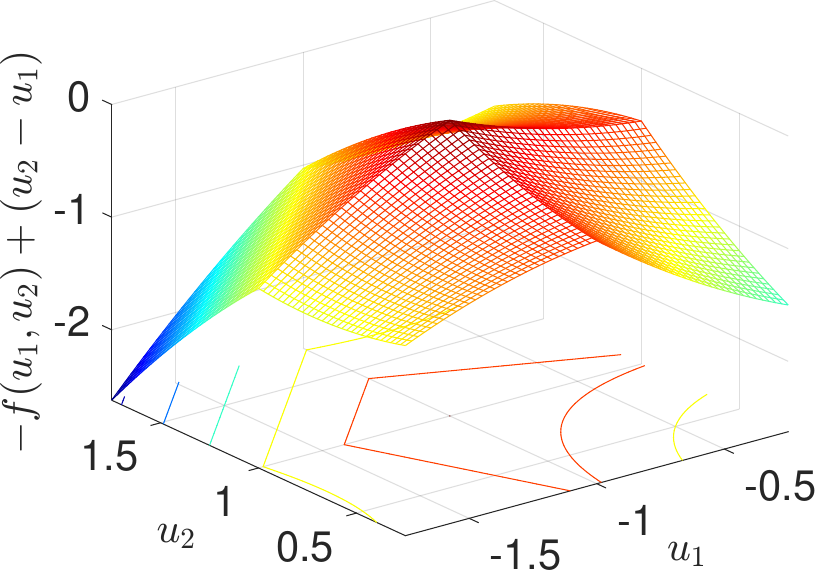}}
\caption{The functions plots of (negated) $f(\bu)$ and $f(\bu)-\bu_0^{\T}\bu$ around $\bu_0$ when $\bu^{\star}=(1,1)^{\T}$.}
\label{fig:fcnplot-3}
\end{figure}

In summary, the above examples deliver a message that tilting a function can potentially lead to some negative consequences. 

\subsection{Prospects}
\begin{wrapfigure}[14]{r}{0.35\linewidth}
\vspace{-1em}
    \centering
    \includegraphics[width=\linewidth]{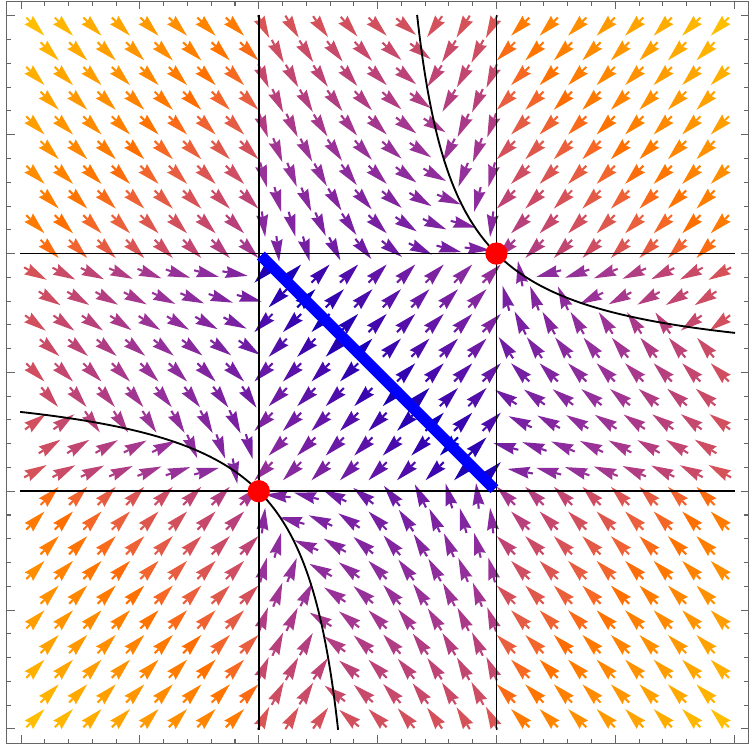}
    \caption{The (negative) subgradient flow on a specific instance of $f$.}
    \label{fig:subgradient-flow}
\end{wrapfigure}
Despite the above negative sides, we highlight that the situation on the global optimization of $f$ might be much more benign than our expectation. This is motivated by the following observation: We have visualized the (negative) subgradient flow on the specific instance of $f$ investigated in Proposition~\ref{prop:absense-active-manifold} and Example~\ref{ex:tilt-3}, which is shown in the right panel, where the thick line segment stands for the set of spurious stationary points and the two dots the ground-truths. The visualization clearly suggests that with generic initializations (or more specifically, without being initialized right on the one-dimensional subspace $\{(x,y)^{\T}:x+y=0\}$), the (negative) subgradient flow will eventually lead to the global optimality of this function. Owing to this observation, we would like to make the following conjecture.
\vspace{\baselineskip}

\begin{conjecture}\label{conj:convergence}
    For the $\ell_1$-norm rank-one symmetric matrix factorization problem, the subgradient method with random initialization (with distribution absolutely continuous w.r.t.\ the Lebesgue measure) and sufficiently small step sizes that are also diminishing (i.e., positive, not summable, and having a limit of zero) converges to its global optimality almost surely.
\end{conjecture}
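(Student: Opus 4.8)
We sketch a plausible route toward Conjecture~\ref{conj:convergence}; a complete proof is left for future work. Write $\mathcal{S}:=\{\bu:|u_i|\le|u^{\star}_i|,\,i\in[n],\,(\operatorname{Sign}(\bu^{\star}))^{\T}\bu=0\}$ for the set of spurious stationary points of $f$ identified in Theorem~\ref{thm:char-critical-r1}; it is a compact polytope contained in the affine subspace $\{\bu:(\operatorname{Sign}(\bu^{\star}))^{\T}\bu=0,\ u_j=0\ \forall\,j\notin\operatorname{supp}(\bu^{\star})\}$, hence of Lebesgue measure zero in $\R^n$. Since $f$ is coercive (indeed $f(\bu)\ge\tfrac12(\|\bu\|_1^{2}-\|\bu^{\star}\|_1^{2})$), locally Lipschitz, and semialgebraic, the first step is to invoke the now-standard convergence theory of the subgradient method on tame functions with positive, non-summable, vanishing step sizes: the iterates stay bounded, the function values converge, and the accumulation set is a connected subset of the stationary set, which by Theorem~\ref{thm:char-critical-r1} equals $\{\pm\bu^{\star}\}\cup\mathcal{S}$. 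It therefore suffices to prove that, for Lebesgue-almost every (and hence, for an absolutely continuous random) initialization, the method does not accumulate on $\mathcal{S}$ — for then the accumulation set collapses to $\{\bu^{\star}\}$ or $\{-\bu^{\star}\}$ and the iterates converge to a global minimizer.

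The heart of the matter is a nonsmooth strict-saddle analysis of $\mathcal{S}$ powered by the strict negative curvature of Theorem~\ref{thm:main-rank-one-outlier-free}. The crucial structural observation — which also pinpoints why the abstract avoidance results of~\cite{davis2021active,davis2022proximal,bianchi2024stochastic} cannot be invoked off the shelf, as Proposition~\ref{prop:absense-active-manifold} shows $f$ has no active manifold along $\mathcal{S}$ — is that $f$ is nonetheless \emph{piecewise quadratic} (a finite max of quadratics, cf.\ the proof of Theorem~\ref{thm:main-rank-one-outlier-free}) and is \emph{locally smooth around the relative interior of $\mathcal{S}$} after peeling off a single $\ell_1$ term: a direct computation, in the spirit of the case analysis of Theorem~\ref{thm:char-critical-r1}, shows that near any $\bar{\bu}\in\operatorname{relint}(\mathcal{S})$,
$$
f(\bu)=\tfrac12\|\bu^{\star}\|_1^{2}-\tfrac12\big((\operatorname{Sign}(\bu^{\star}))^{\T}\bu\big)^{2}+\|\bar{\bu}\|_1\!\!\sum_{j\notin\operatorname{supp}(\bu^{\star})}\!\!|u_j|+R(\bu),
$$
where $R$ gathers terms of second order in $\bu-\bar{\bu}$ and of higher order in the off-support coordinates. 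Consequently the off-support coordinates are strictly attracted to $0$ by the subgradient dynamics, while on the slice where they vanish the dynamics coincide with the gradient flow of the concave quadratic $-\tfrac12\big((\operatorname{Sign}(\bu^{\star}))^{\T}\bu\big)^{2}$ in the support coordinates, whose only unstable direction is $\operatorname{Sign}(\bu^{\star})$. One thereby constructs, in a neighborhood of $\operatorname{relint}(\mathcal{S})$, a center-stable set of codimension at least one, and the classical stable-manifold theorem — applied to this reduced \emph{smooth} problem exactly as in~\cite{lee2016gradient,jin2017escape} — shows that only a Lebesgue-null set of nearby initializations stays near the ridge, so the set of points flowing into $\operatorname{relint}(\mathcal{S})$ is null.

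It then remains to globalize and discretize. Since $\mathcal{S}$ is a compact polytope, I would cover it by finitely many neighborhoods as above and treat its boundary $\partial\mathcal{S}$ — the faces on which some $|u_i|=|u^{\star}_i|$ and extra quadratic pieces turn active — stratum by stratum by the same argument (each stratum is again a polytope of strictly smaller dimension on which $f$ restricts to a piecewise-quadratic function with an analogous negative-curvature normal direction), so that the full stable set of $\mathcal{S}$ is a finite union of null sets, hence null. To pass from the subgradient flow to the diminishing-step iterates, I would use the ODE (stochastic-approximation) method for the associated differential inclusion together with an avoidance-of-traps argument: iterates launched from a distribution absolutely continuous with respect to the Lebesgue measure almost surely do not accumulate on the repelling set $\mathcal{S}$, and therefore converge to $\{\pm\bu^{\star}\}$, i.e., to global optimality. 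The \textbf{main obstacle} is precisely the absence of active manifolds at the spurious stationary points (Proposition~\ref{prop:absense-active-manifold}): it forces one to replace the ready-made avoidance theorems by a bespoke center-stable-manifold construction for piecewise-quadratic functions, to control the dynamics near the corners $\partial\mathcal{S}$, and to cope with the set-valuedness and non-injectivity of the subgradient map — which together make the measure-zero bookkeeping the delicate part. A possibly cleaner alternative is to first establish, via a Lyapunov argument on $\big((\operatorname{Sign}(\bu^{\star}))^{\T}\bu_k\big)^{2}$ and $\sum_{j\notin\operatorname{supp}(\bu^{\star})}|u_{k,j}|$, that once the iterates enter a neighborhood of $\mathcal{S}$ they are squeezed onto the slice $\{u_j=0,\ j\notin\operatorname{supp}(\bu^{\star})\}$ and then ejected along $\operatorname{Sign}(\bu^{\star})$, thereby reducing the whole question to the genuinely smooth rank-one problem on $\operatorname{supp}(\bu^{\star})$, to which~\cite{lee2016gradient} applies essentially verbatim.
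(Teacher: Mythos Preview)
The statement you are addressing is not a theorem but a \emph{conjecture}: the paper explicitly labels it as Conjecture~\ref{conj:convergence} and offers no proof whatsoever. It is motivated purely by the visualization of the subgradient flow in Figure~\ref{fig:subgradient-flow} and is left open as a direction for future work (see also the concluding remarks in Section~\ref{sec:conclusion}). There is therefore no ``paper's own proof'' to compare against; your sketch goes strictly beyond what the paper attempts.

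As for the sketch itself, it is a reasonable outline of an attack and you correctly identify the main obstruction (Proposition~\ref{prop:absense-active-manifold}: no active manifold at spurious stationarities, so the off-the-shelf nonsmooth avoidance results of \cite{davis2021active,davis2022proximal,bianchi2024stochastic} do not apply). Two places deserve more caution. First, your local expansion of $f$ near $\operatorname{relint}(\mathcal{S})$ is not quite right: the coefficient in front of $\sum_{j\notin\operatorname{supp}(\bu^{\star})}|u_j|$ should be $\|\bu_{\operatorname{supp}(\bu^{\star})}\|_1$ (a function of the current point, not the fixed constant $\|\bar{\bu}\|_1$), and the ``support'' part is only the smooth quadratic $-\tfrac12\big((\operatorname{Sign}(\bu^{\star}))^{\T}\bu\big)^{2}$ as long as every $|u_i|<|u^{\star}_i|$ strictly, so the reduction to a genuinely smooth problem is cleaner than your remainder $R$ suggests. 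Second, and more seriously, the ``measure-zero bookkeeping'' you flag is not a technicality but the whole difficulty: the subgradient update is set-valued, generally not injective, and need not be a local diffeomorphism even where $f$ is smooth, so the center-stable-manifold/\cite{lee2016gradient} machinery does not transfer without a genuine new argument. Your stratum-by-stratum treatment of $\partial\mathcal{S}$ is also optimistic in light of the paper's own analysis: Proposition~\ref{prop:absense-active-manifold} is precisely about a boundary point $\bu_0\in\partial\mathcal{S}$, and the pathology exhibited there (the restriction of $f$ to any candidate manifold fails to be $C^1$) is exactly what your inductive step would have to confront. In short, your proposal is a plausible program, but it remains a program; the conjecture is open in the paper and your sketch does not close it.
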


\section{Concluding remarks and future works}\label{sec:conclusion}
In this paper, we have investigated the nonsmooth optimization landscape of the $\ell_1$-norm rank-one symmetric matrix factorization problem. Specifically, we have completely characterized the set of stationary points of the problem. Based on the characterization, we have shown that at each spurious stationary point of the problem, there always exists some direction, along which the second subderivatives are negative, thus proving all second-order stationary points are actually globally optimal. With the above developments, we have revisited existing results on the generic minimizing behavior of simple algorithms for nonsmooth optimization and showcased the potential risk of their applications to our problem.

This paper also opens up several interesting (but can be rather challenging) directions for future studies. As the most natural direction, since the main purpose of introducing the $\ell_1$-norm for rank-one symmetric matrix factorization is to enhance its robustness to outliers, investigating the outlier conditions under which the inexistence of spurious second-order stationary points still holds true is a necessity. Besides, since this paper only considers the rank-one symmetric case of the $\ell_1$-norm matrix factorization problem, it is also interesting and meaningful to study the optimization landscapes of its asymmetric and higher-rank counterparts via second-order variational analysis. 
Moreover, as we have already discussed, investigating the algorithm design for either deterministically or generically converging to some second-order stationary point of a nonsmooth function is also promising and significant. Finally, with the machinery developed in this paper, it becomes more possible to perform landscape analysis for a variety of other more sophisticated nonsmooth learning problems, such as robust phase retrieval~\cite{duchi2019solving,davis2020nonsmooth}, robust low-rank matrix recovery~\cite{li2020nonconvex,ma2023global}, neural collapse with robust losses (see, e.g.,~\cite{zhu2021geometric,zhou2022optimization,yaras2022neural,zhou2022all,li2024neural} for its smooth counterparts), robust principal component analysis~\cite{wang2023linear,zheng2022linearly} (see also the references therein), robust feature extraction~\cite{chen2023robust}, and robust rotation group synchronization~\cite{liu2024resync} (as well as the robust counterparts of many other synchronization problems such as~\cite{liu2017estimation,zhu2021orthogonal,liu2023unified}).

\section*{Acknowledgments} 
The authors would like express their sincere gratitude to \href{https://www1.se.cuhk.edu.hk/~tianlai/}{Lai Tian} at the Chinese University of Hong Kong for reminding them that tilting a function can be dangerous and many helpful discussions at the early stage of the project, as well as \href{https://wangjinxin-terry.github.io/}{Jinxin Wang} at the Chinese University of Hong Kong for his reading and comments, which in particular renew their attention to the paper~\cite{davis2020nonsmooth}.

\bibliographystyle{ieeetr}
\bibliography{references.bib}{}
\end{document}